\documentclass{amsart}%
\usepackage{amsfonts, amsmath, amssymb}%
\usepackage{graphicx, epic, enumerate}
\usepackage{float}

\newtheorem{theorem}{Theorem}
\theoremstyle{definition}

\newtheorem{corollary}[theorem]{Corollary}
\newtheorem{definition}[theorem]{Definition}
\newtheorem{example}[theorem]{Example}
\newtheorem{lemma}[theorem]{Lemma}
\newtheorem{proposition}[theorem]{Proposition}
\newtheorem{remark}[theorem]{Remark}

\numberwithin{equation}{section}

\newcommand{\N}{\mathbb{N}}

\def\co{\colon\thinspace} 

\DeclareMathOperator{\id}{Id}

\setlength{\textwidth}{5.2in}      
  
%%%%%%%%%%%%%%%%%%%%%%%%%%%%%%%%%%%%%%%%%%%%%%%%%%%%%%%%%%%%%%%%%%%%%%%%%%
\begin{document}
\allowdisplaybreaks
\title{On the virtual singular braid monoid}

\author{Carmen Caprau}
\address{Department of Mathematics, California State University, Fresno, CA 93740, USA}
\email{ccaprau@csufresno.edu}
\urladdr{}
\author{Sarah Zepeda}
%\address{}
\email{sarahrosezepeda@gmail.com}

\date{}
\subjclass[2010]{57M25, 57M27; 20F36}
\keywords{braids, monoidal category, Reidemeister-Schreier method, representations of braid structures, virtual singular braids, Yang-Baxter equation}

%%%%%%%%%%%%%%%%%%%%%%%%%%%%%%%%%%%%%%%%%%%%%%%%%%%%%%%%%%%%%%%%%%%%%%%%%%
\begin{abstract} 
We study the algebraic structures of the virtual singular braid monoid, $VSB_n$, and the virtual singular pure braid monoid, $VSP_n$. The monoid $VSB_n$ is the splittable extension of $VSP_n$ by the symmetric group $S_n$. We also construct a representation of $VSB_n$.
\end{abstract}
%%%%%%%%%%%%%%%%%%%%%%%%%%%%%%%%%%%%%%%%%%%%%%%%%%%%%%%%%%%%%%%%%%%%%%%%%%

\maketitle

\section{Introduction}

Virtual singular braids are interesting mathematical objects that are worth studying not only for their algebraic and topological properties but also for their relationship with virtual singular knots and links. This relationship is determined by generalizations of Alexander~\cite{A} and Markov Theorems~\cite{M} for classical braids and links, and it was studied by the authors in~\cite{AS}. The set of isotopy classes of $n$-stranded virtual singular braids (together with the composition of braids given by vertical concatenation) forms a monoid, called the $n$-stranded virtual singular braid monoid and denoted by $VSB_n$. This monoid can be presented in more than one way using generators and relations. The standard presentation uses $4n$ generators, $n$ generators for each type of crossings: classical ($\sigma_i^{\pm 1}$), virtual ($v_i$), and singular ($\tau_i$) crossings. We represent these generators as combinatorial objects describing different types of interactions between consecutive strands in a braid. Among these generators, only $\tau_i$ is not invertible. In addition, the virtual generator $v_i$ can be identified with the transposition $(i, i+1)$ in $S_n$, the symmetric group on $n$ letters.
In~\cite{AS}, it was proved that $VSB_n$ also admits a reduced presentation using fewer generators, namely $\sigma_1^{\pm1}$ and $\tau_1$, together with all of the $v_i$'s. 

There exists a homomorphism from $VSB_n$ onto $S_n$ with kernel the $n$-stranded virtual singular pure braid monoid, $VSP_n$.  The braids $\mu_i = \sigma_i v_i, \, \, \mu_i^{-1} = v_i \sigma_i^{-1}$ and $\gamma_i = \tau_i v_i$ are members of $VSP_n$. The virtual singular pure braids $\mu_i$ and their inverses are particularly interesting due to their intimate connection with the algebraic Yang-Baxter equation. The relationship between the algebraic structure of the virtual pure braid group and the algebraic Yang-Baxter equation was pointed out in ~\cite{BB, BEE, KL2}.

In this paper, we employ the pure braids $\mu_i^{\pm 1}$ and $\gamma_i$---together with the virtual generators $v_i$---to provide another presentation for the $n$-stranded virtual singular braid monoid. The elements $\mu_i^{\pm 1}$ were used in the work by Kauffman and Lambropoulou~\cite{KL2} to give an interpretation of the virtual braid group in terms of a monoidal category. Variants of these braids were also used by Bardakov in~\cite{Ba} to study the structure of the virtual braid group and the virtual pure braid group.  In our work, we refer to $\mu_i^{\pm 1}$ and $\gamma_i$ as the `elementary fusing strings' and represent them as shown in Figure~\ref{fig:connecting-strings}.  We also provide a reduced presentation for the monoid $VSB_n$ with fewer generators, namely $\mu_1^{\pm 1}, \gamma_1$ and $v_i$, for all $1 \leq i \leq n-1$. 

Another goal of this paper is to make use of the Reidemeister-Schreier method (see~\cite{MKS}) to find a set of generators and relations for the virtual singular pure braid monoid. The elementary fusing strings are not sufficient to generate $VSP_n$, and consequently, we define the `generalized fusing strings'. These are elements of $VSP_n$ and can be represented in terms of the elementary fusing strings and the detour move from virtual knot theory. (For a good introduction of virtual knot theory and the detour move in particular, we refer the reader to the work by Kauffman~\cite{K1}.)

We also give an interpretation of $VSB_n$ in terms of a monoidal category, and then use this category to construct a representation of $VSB_n$ into a submonoid of linear operators over $V^{\otimes n}$, where $V$ is a finite dimensional vector space.

The paper is organized as follows. In Section~\ref{sec:introVSBn} we review the standard and the reduced presentations for $VSB_n$, as introduced in~\cite{AS}. In Section~\ref{sec:newpres} we define the elementary fusing strings and find the set of relations among them, yielding to a new presentation for $VSB_n$. In Section~\ref{sec:pure braids} we focus on the virtual singular pure braid monoid, $VSP_n$, and find a presentation for it with generators the generalized fusing strings. We also show that $VSB_n$ is a semi-direct product of $VSP_n$ and  $S_n$. Finally, in Section~\ref{sec:repres} we define the monoidal category $\textbf{FS}$ of fusing strings as the obvious categorification of the monoid $VSB_n$. We then use this category to construct a representation of $VSB_n$ via a monoidal functor  $\textbf{FS} \to \mathbb{K}-\textbf{Vec}$, where $\mathbb{K}-\textbf{Vec}$ is the category of vector spaces over a field $\mathbb{K}$. We close with an example of such a representation.

\section{Virtual singular braids}\label{sec:introVSBn}

In this section, we provide a brief review of the virtual singular braid monoid.
 
Throughout the paper, $n \in \N$, $n \geq 2$. The set of isotopy classes of $n$-stranded virtual singular braids forms a monoid, which we denote by ${VSB}_n$, and call the $n$-stranded virtual singular braid monoid. The monoid operation is the usual braid multiplication: given two $n$-stranded virtual singular braids $\beta$ and $\beta'$, we form the braid $\beta \beta'$ by stacking $\beta$ on top of $\beta'$ and gluing their endpoints. This monoid has as subsets the virtual braid group~\cite{Ka, K1, KL1, KL2, V} and the singular braid monoid~\cite{B, G}. 
The \textit{identity} element of ${VSB}_n$, denoted by $1_n$, is the braid with $n$ vertical strands free of crossings.

 \subsection{Standard presentation for $VSB_n$}
We begin by giving the standard presentation for the virtual singular braid monoid using generators and relations.
%Definition 1
\begin{definition} \label{def:vsbn} 
The $n$-stranded \textit{virtual singular braid monoid}, ${VSB}_n$, is the monoid generated by the \textit{elementary virtual singular braids} $\sigma_i,\sigma_i^{-1},v_i$ and $\tau_i$, for $1 \leq i \leq n-1$:

\[  \sigma_i \,\,\, =\,\,\,  \raisebox{-17pt}{\includegraphics[height=.5in]{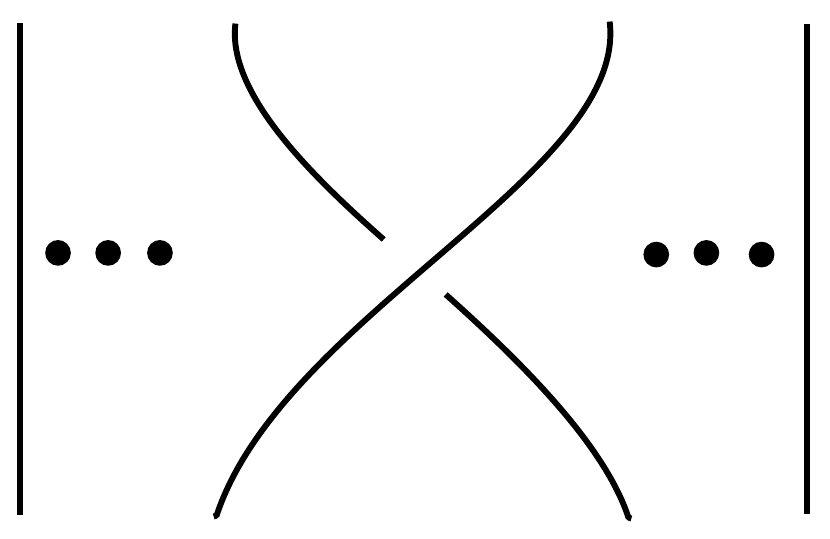}} \hspace{1cm} \sigma_i^{-1} \,\,\,=\,\,\, \raisebox{-17pt}{\includegraphics[height=.5in]{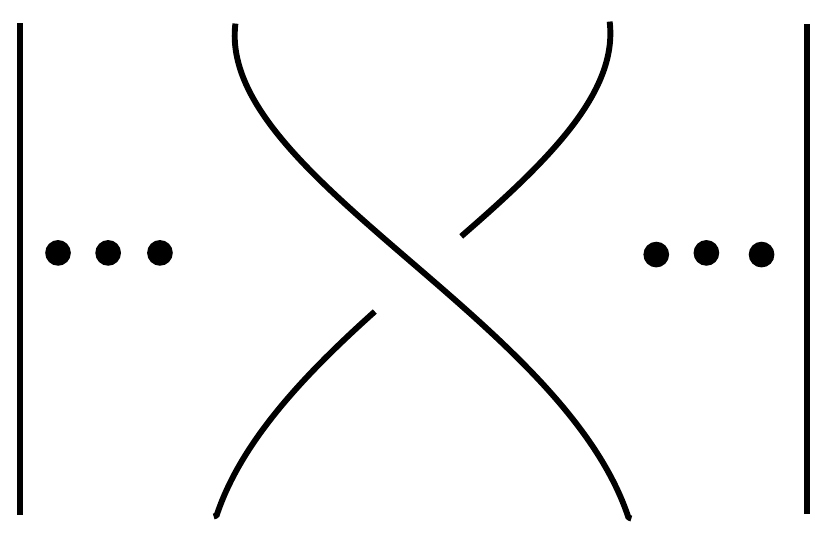}}
\put(-180, 21){\fontsize{7}{7}$1$}
\put(-165, 21){\fontsize{7}{7}$i$}
\put(-150, 21){\fontsize{7}{7}$i+1$}
\put(-127,21){\fontsize{7}{7}$n$}
\put(-58, 21){\fontsize{7}{7}$1$}
\put(-40, 21){\fontsize{7}{7}$i$}
\put(-25, 21){\fontsize{7}{7}$i+1$}
\put(-3,21){\fontsize{7}{7}$n$}
\]
\vspace{0.2cm}
\[ \tau_i \,\,\,= \,\,\, \stackrel \,\, \raisebox{-17pt}{\includegraphics[height=.5in]{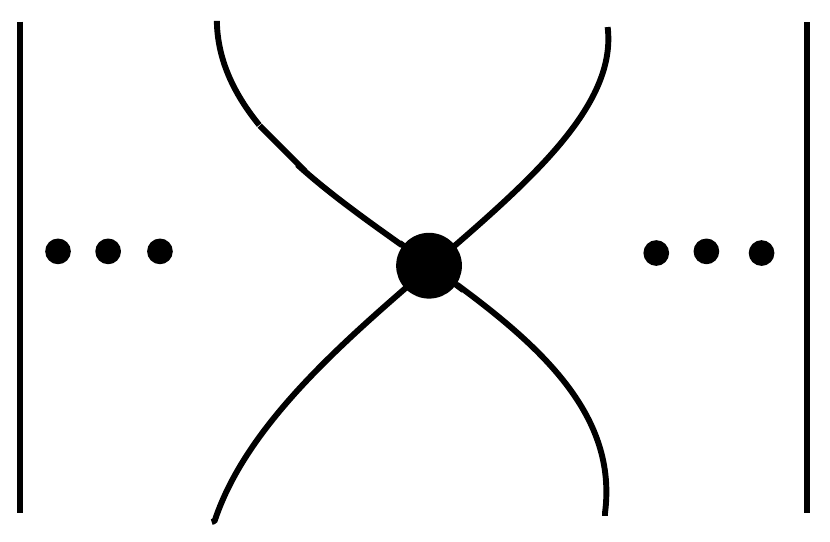}} \hspace{1cm} v_i \,\,\, = \,\,\, \raisebox{-17pt}{\includegraphics[height=.5in]{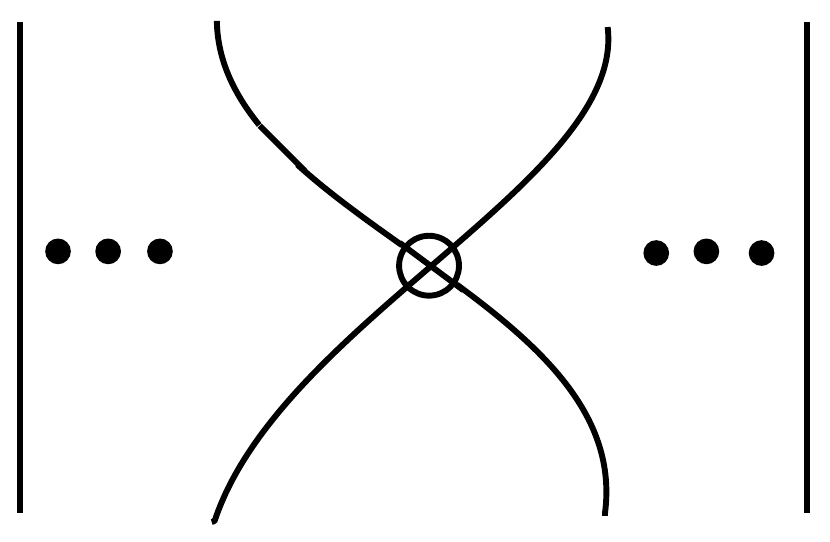}}
\put(-171, 21){\fontsize{7}{7}$1$}
\put(-158, 21){\fontsize{7}{7}$i$}
\put(-140, 21){\fontsize{7}{7}$i+1$}
\put(-119,21){\fontsize{7}{7}$n$}
\put(-56, 21){\fontsize{7}{7}$1$}
\put(-42, 21){\fontsize{7}{7}$i$}
\put(-25, 21){\fontsize{7}{7}$i+1$}
\put(-3,21){\fontsize{7}{7}$n$}
\]
with the defining relations:

\begin{enumerate}
\item $\sigma_i\sigma_i^{-1}=\sigma_i^{-1}\sigma_i=1_n$
\begin{eqnarray*}
\raisebox{-.5cm}{\includegraphics[height=0.7in]{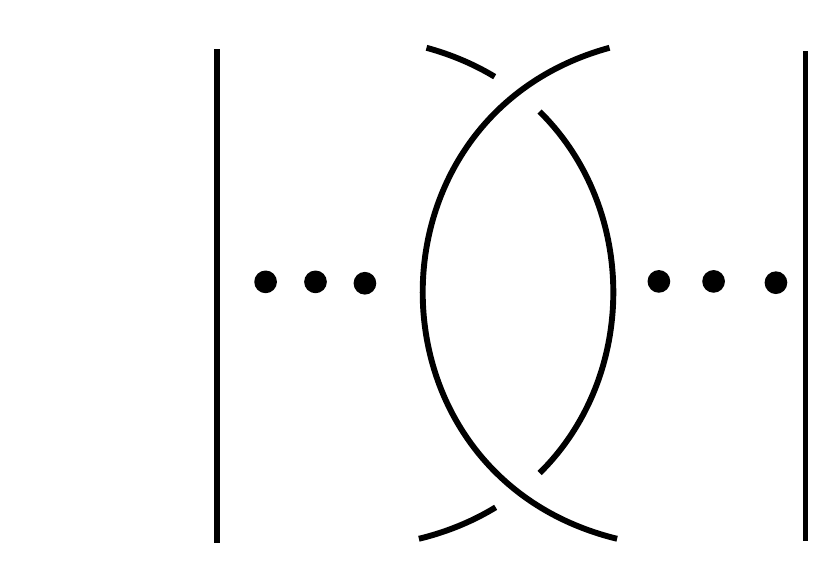}} \hspace{0.5cm} \stackrel{R2}{=} \hspace{0.18cm} &\raisebox{-1cm}{\includegraphics[height=0.85in]{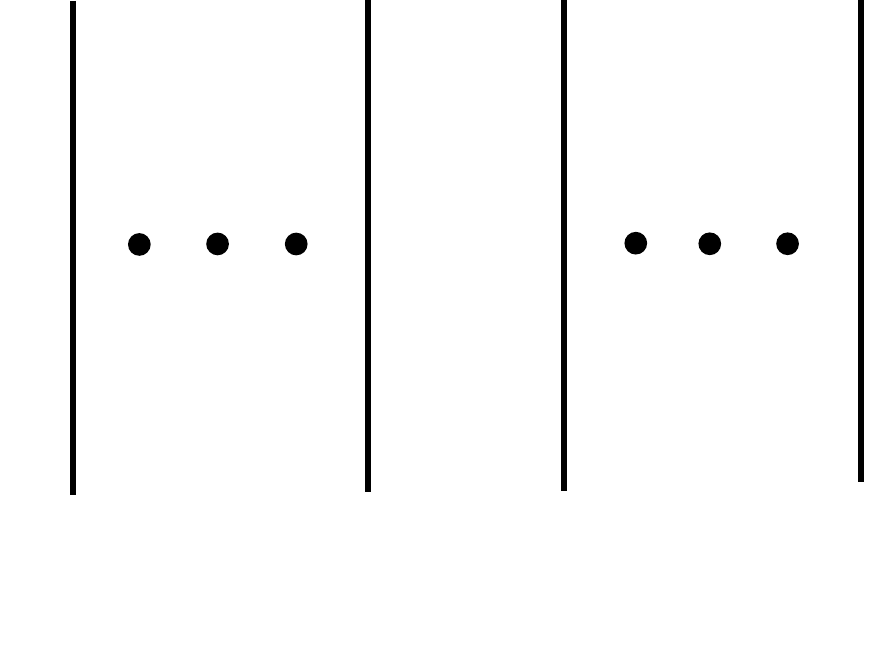}}
\end{eqnarray*}

\item $v_i^2=1_n$
\begin{eqnarray*}
\raisebox{-.5cm}{\includegraphics[height=0.7in]{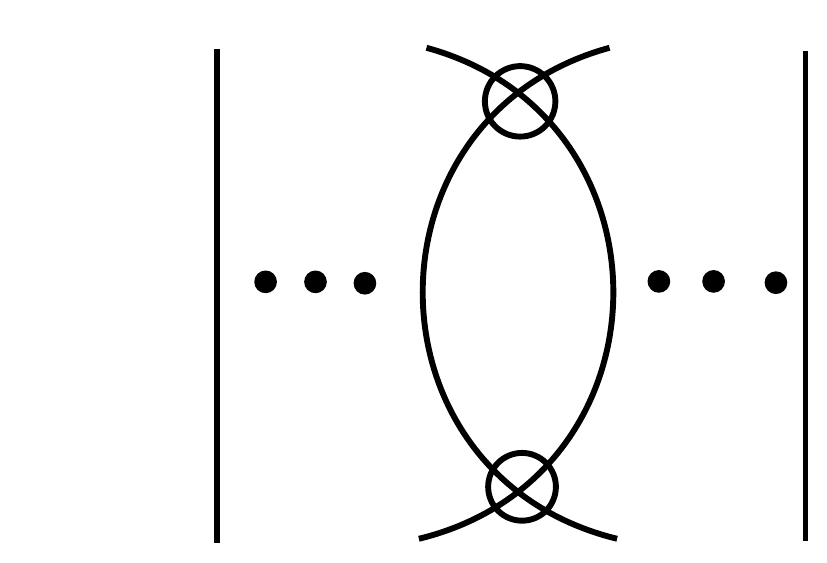}} \hspace{0.5cm} \stackrel{V2}{=} \hspace{0.18cm} &\raisebox{-1cm}{\includegraphics[height=0.85in]{idb}}
\end{eqnarray*}

\item $\sigma_i\sigma_j\sigma_i=\sigma_j\sigma_i\sigma_j$, for $|i-j|=1$
\begin{eqnarray*}
\raisebox{-.5cm}{\includegraphics[height=0.7in]{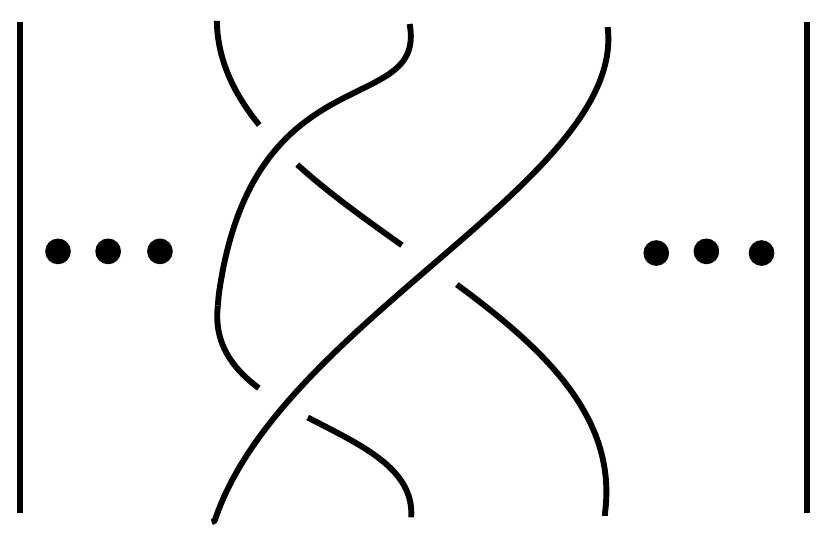}} \hspace{0.5cm} \stackrel{R3}{=} \hspace{0.2cm} &\raisebox{-.5cm}{\includegraphics[height=0.7in]{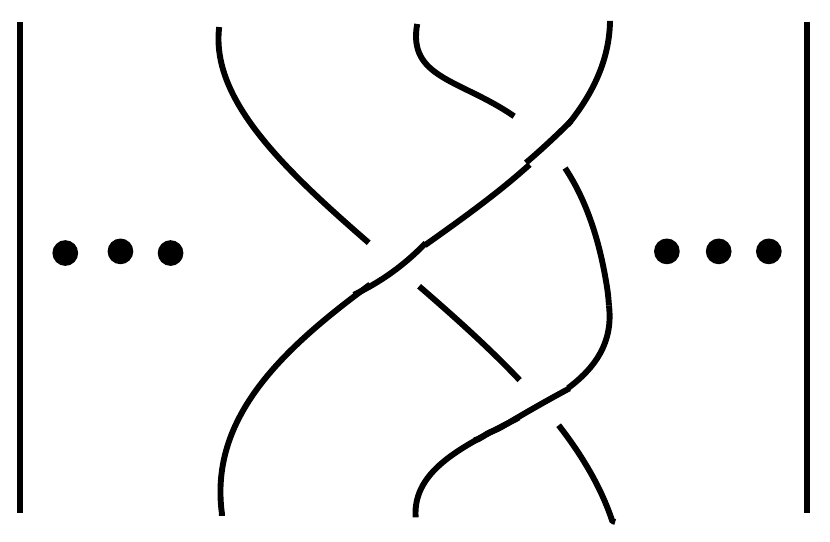}}
\end{eqnarray*}

\item $v_{i}v_{j}v_{i}=v_{j}v_{i}v_{j}$, for $|i-j|=1$
\begin{eqnarray*}
\raisebox{-.5cm}{\includegraphics[height=0.7in]{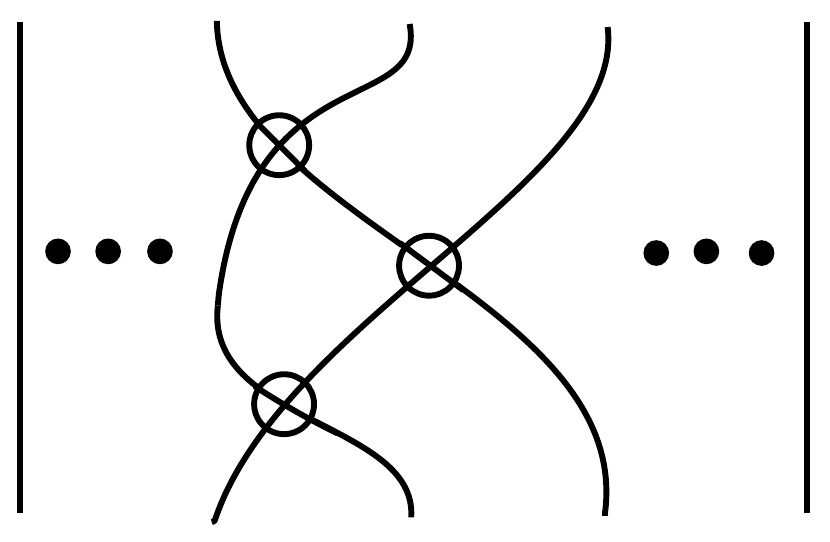}} \hspace{0.5cm} \stackrel{V3}{=} \hspace{0.2cm}&\raisebox{-.5cm}{\includegraphics[height=0.7in]{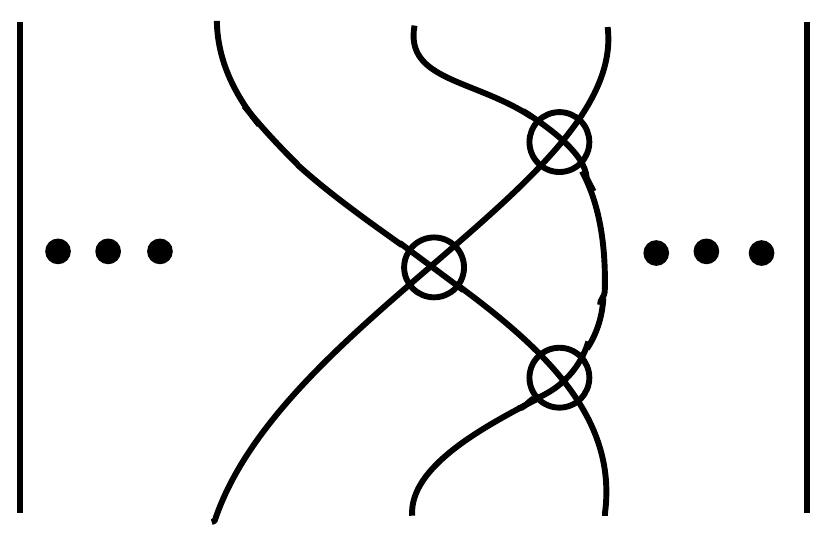}}
\end{eqnarray*}

\item $v_i\sigma_{j}v_i=v_{j}\sigma_iv_{j}$, for $|i-j|=1$ 
\begin{eqnarray*}
\raisebox{-.5cm}{\includegraphics[height=0.7in]{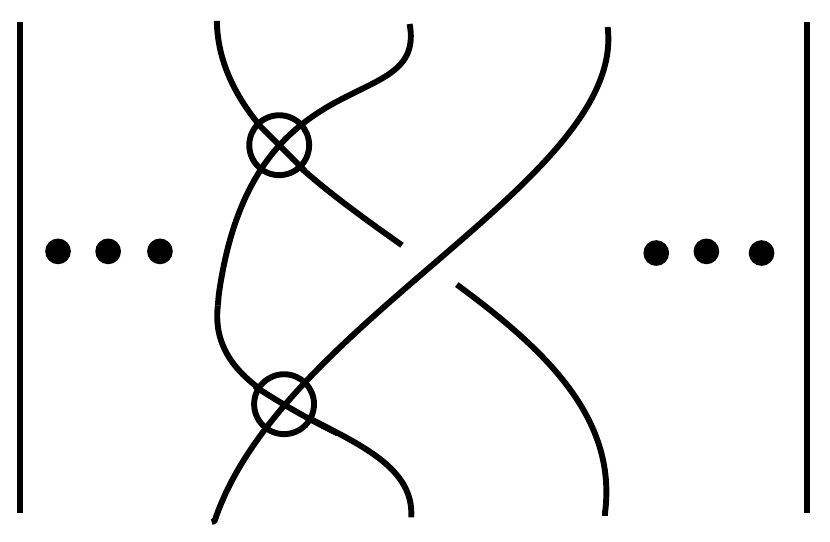}} \hspace{0.5cm} \stackrel{VR3}{=}\hspace{0.2cm} &\raisebox{-.5cm}{\includegraphics[height=0.7in]{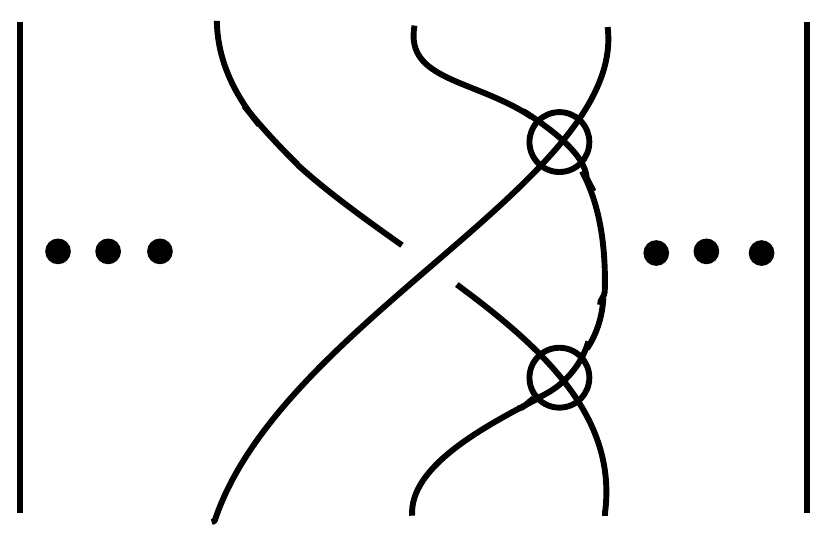}}
\end{eqnarray*}

\item $v_i\tau_{j}v_i=v_{j}\tau_iv_{j}$, for $|i-j|=1$
\begin{eqnarray*}
\raisebox{-.5cm}{\includegraphics[height=0.7in]{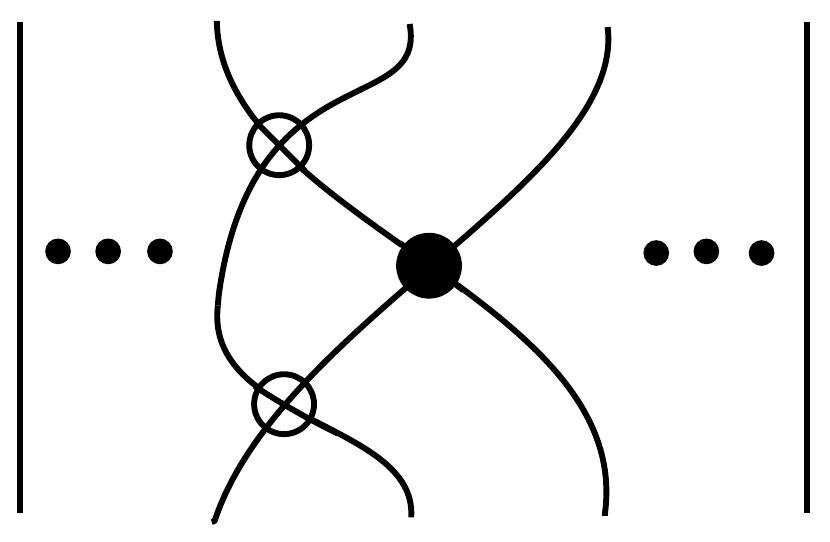}} \hspace{0.5cm} \stackrel{VS3}{=} \hspace{0.2cm} &\raisebox{-.5cm}{\includegraphics[height=0.7in]{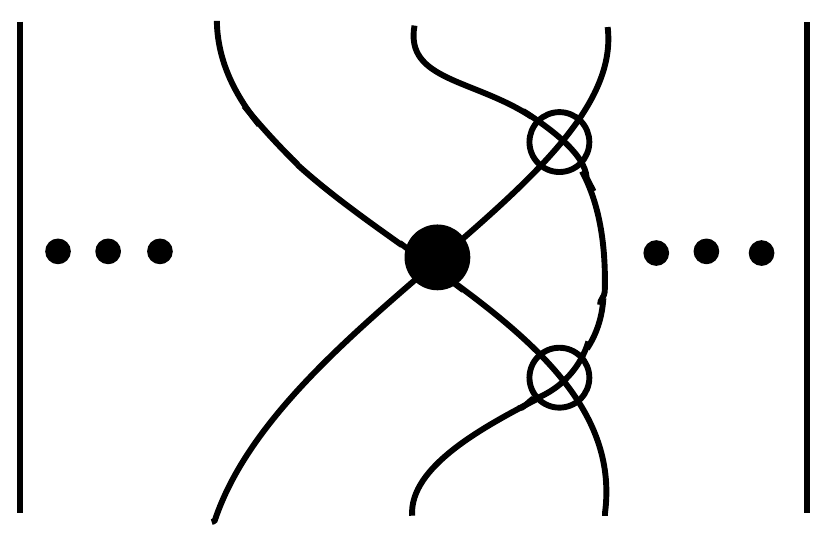}}
\end{eqnarray*}

\item $\sigma_i\sigma_j\tau_i=\tau_j\sigma_i\sigma_j$ for $|i-j|=1$.
\begin{eqnarray*}
\raisebox{-.5cm}{\includegraphics[height=0.7in]{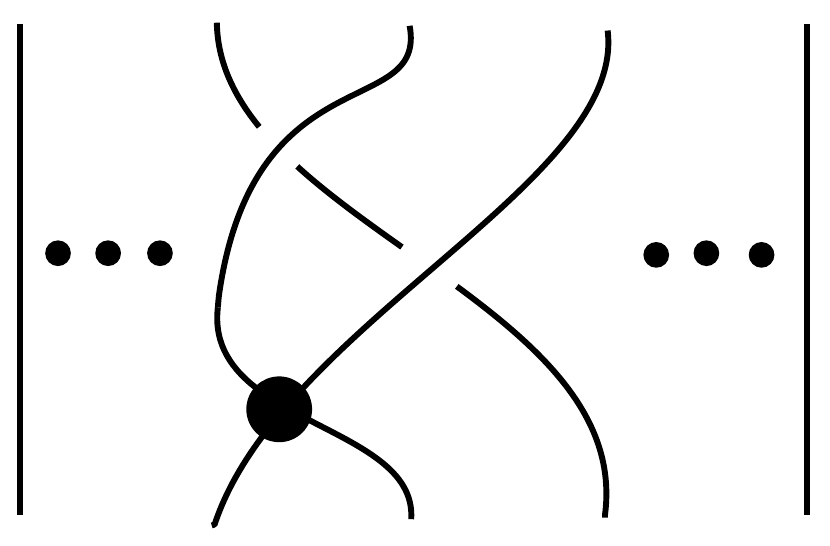}} \hspace{0.5cm} \stackrel{RS3}{=} \hspace{0.2cm} &\raisebox{-.5cm}{\includegraphics[height=0.7in]{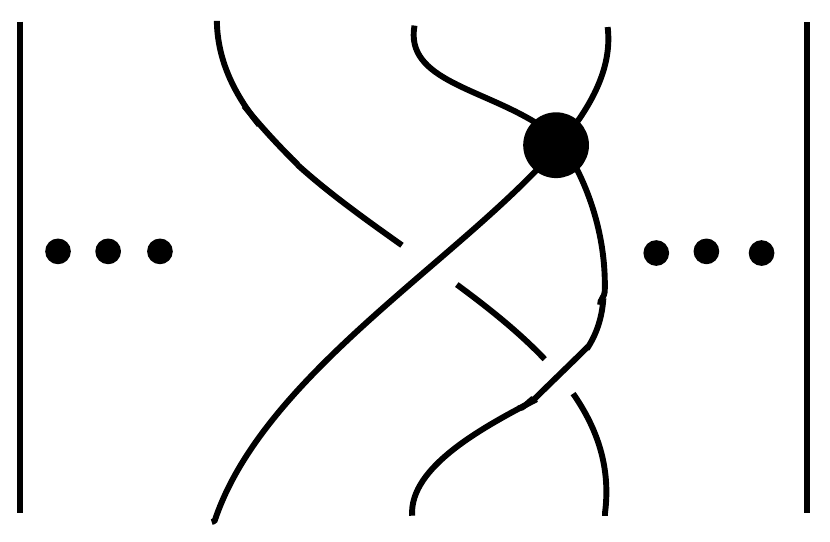}}
\end{eqnarray*}

\item $\sigma_i\tau_i=\tau_i\sigma_i$
\begin{eqnarray*}
\raisebox{-.5cm}{\includegraphics[height=0.7in]{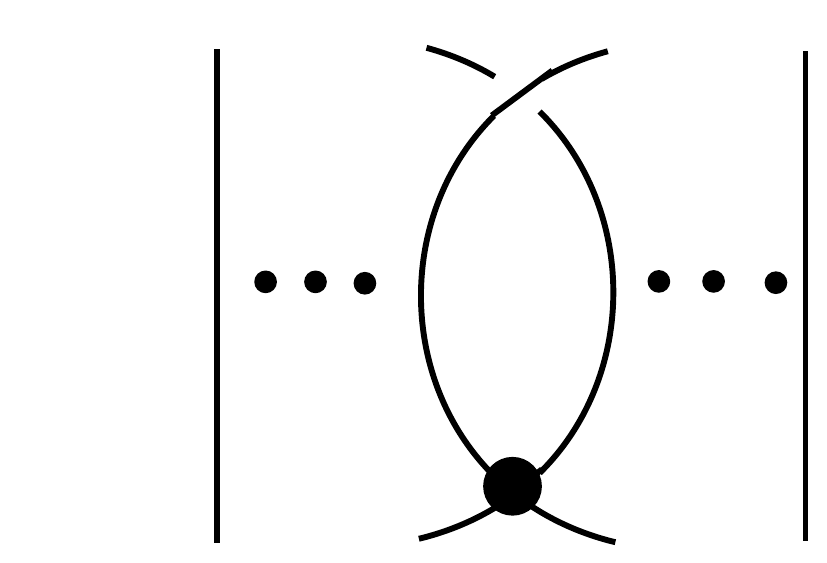}} \hspace{0.5cm} \stackrel{RS1}{=}  \hspace{-0.3cm} &\raisebox{-.5cm}{\includegraphics[height=0.7in]{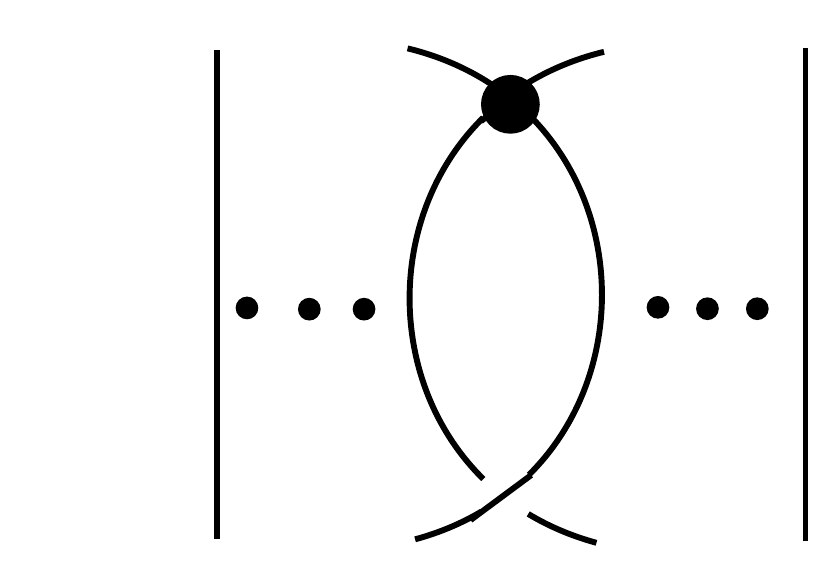}}
\end{eqnarray*}

\item $g_ih_j = h_jg_i $ for $|i-j| >1$, where $g_i, h_i \in \{ \sigma_i, \tau_i, v_i  \}$.
\begin{eqnarray*}
\raisebox{-.5cm}{\includegraphics[height=0.7in]{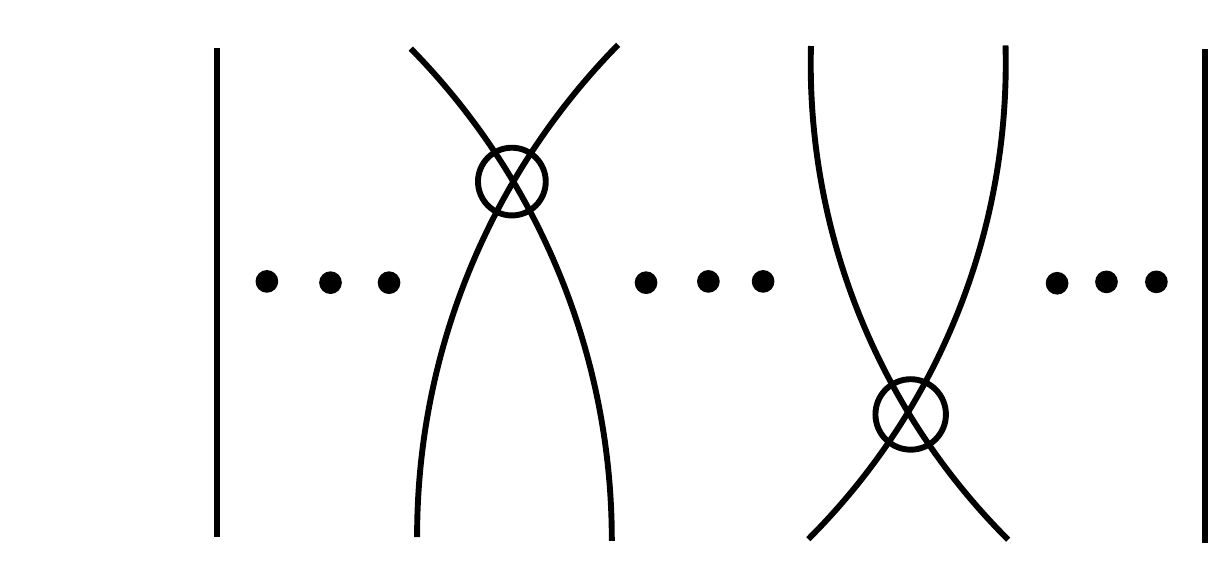}} \hspace{0.5cm} = \hspace{-.3cm} &\raisebox{-.5cm}{\includegraphics[height=0.7in]{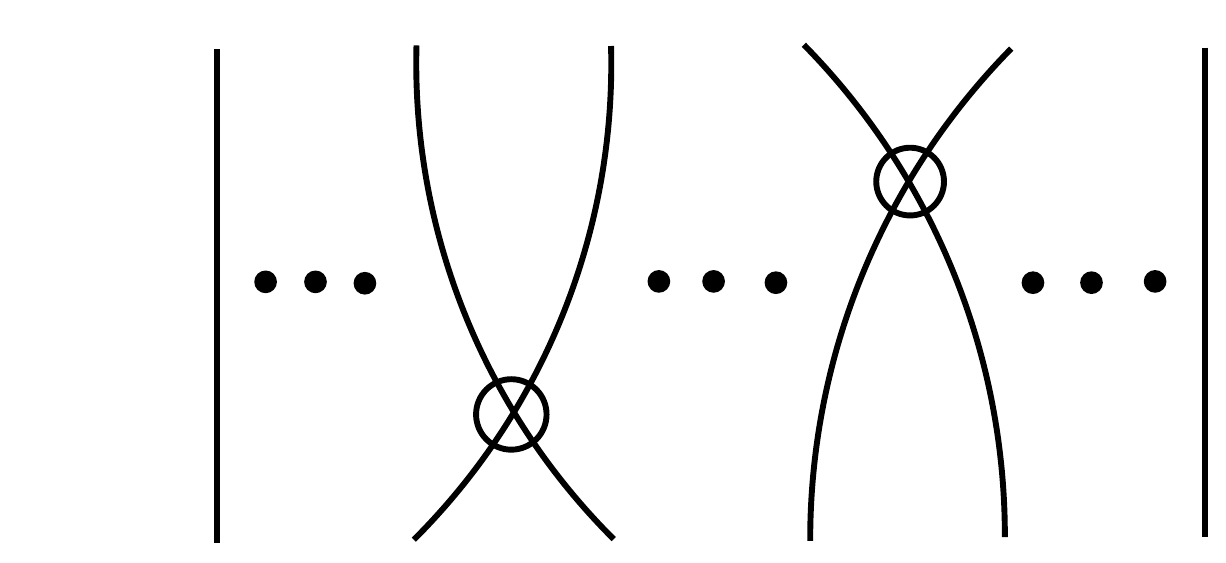}}
\end{eqnarray*}
\end{enumerate}
\end{definition}

These relations taken collectively define the isotopy for virtual singular braids. Each relation in ${VSB}_n$ is a braided version of a Reidemeister-type move for virtual singular link diagrams. We remark that the type I (real and virtual) Reidemeister moves  are not reflected in the defining relations for ${VSB}_n$; these moves cannot be represented using braids, since braid strands must always pass in the downward direction. In addition, the last set of relations listed for $VSB_n$, which we refer to as the \textit{commuting relations}, do not correspond to a move for virtual singular link diagrams. Note that the commuting relations hold for any choice of generators, but we only represented a generic relation using the generators $v_i$.

We refer to the first two relations listed in Definition~\ref{def:vsbn} as the \textit{identity relations}. Likewise, we refer to the third through seventh relations collectively as the \textit{R3-type relations}. Finally, we call the eighth relation the \textit{singular twist relation}.

Note that the generators $\tau_i$ are not invertible in ${VSB}_n$. Naturally, $\sigma_i$ and $\sigma^{-1}_i$ are inverses of each other (by the relation $R2$) and $v_i$ is its own inverse (due to the relation $V2$). 

%Remark 1
\begin{remark}
For every $n$-stranded virtual singular braid there is an associated permutation in $S_n$, which describes the permutation of the braid strands (here we ``read" a braid from bottom to top). For example, the braid in Figure~\ref{fig:ex} is associated with the permutation $(143)$ in $S_4$. Moreover, the generators $\sigma_i^{\pm1}, \tau_i$ and $v_i$ for $VSB_n$ are associated with the transpositions $(i, i+1)$ in $S_n$. 

\begin{figure}[ht] 
\[\includegraphics[height=0.8in]{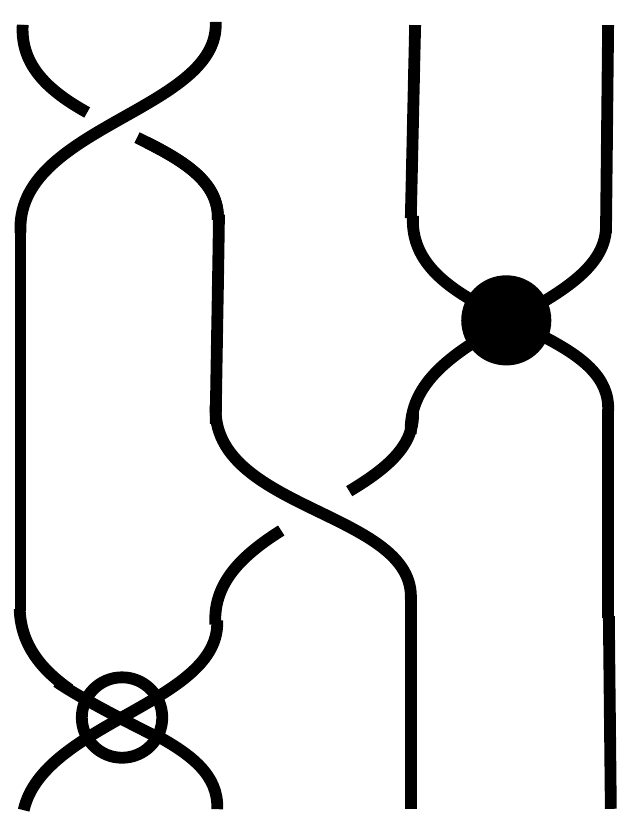}\]
\caption{A braid whose associated permutation is $(143)$}  \label{fig:ex}
\end{figure}
\end{remark}

Therefore, there exists a homomorphism $\pi \co VSB_n\longrightarrow S_n$ defined on the generators of $VSB_n$ by:
 \[\pi(\sigma_i)=\pi(\sigma^{-1}_i)=\pi(v_i)=\pi(\tau_i)=v_i.\]
We use $v_i$ to denote both generators in $VSB_n$ and transpositions $(i, i+1)$ in $S_n$. As elementary virtual singular braids, the generators $v_i$ provide a geometric interpretation of transpositions in $S_n$. Since $S_n$ is generated by transpositions, the homomorphism $\pi$ is surjective.

%Definition 3
\begin{definition}
 The $n$-stranded \textit{virtual singular pure braid monoid}, denoted $VSP_n$, is the kernel of the homomorphism $\pi$:
\[ VSP_n: = \ker(\pi), \] 
and an element in the kernel is called a \textit{virtual singular pure braid}. 
\end{definition}
It is clear that the permutation associated with a virtual singular pure braid is the identity permutation in $S_n$. We return to the $n$-stranded virtual singular pure braid monoid in Section~\ref{sec:pure braids}, where we provide a presentation for $VSP_n$ and show that $VSB_n \cong VSP_n  \rtimes S_n$.

%%%%%%%%%%%%%%%%%%%%%%%%%%%%%%%%%%%%%%%%%%%%%%%%%%%%%%%%%%%%%%%%%%%%%%%%%
\subsection{A reduced presentation for $VSB_n$}
The $R3$-type relations involving virtual crossings are special cases of the \textit{detour move} (see~\cite{K1}), shown in Figure \ref{fig:detour-move}. 
\begin{figure}[ht]
\[ \raisebox{-25pt}{\includegraphics[height=0.8in]{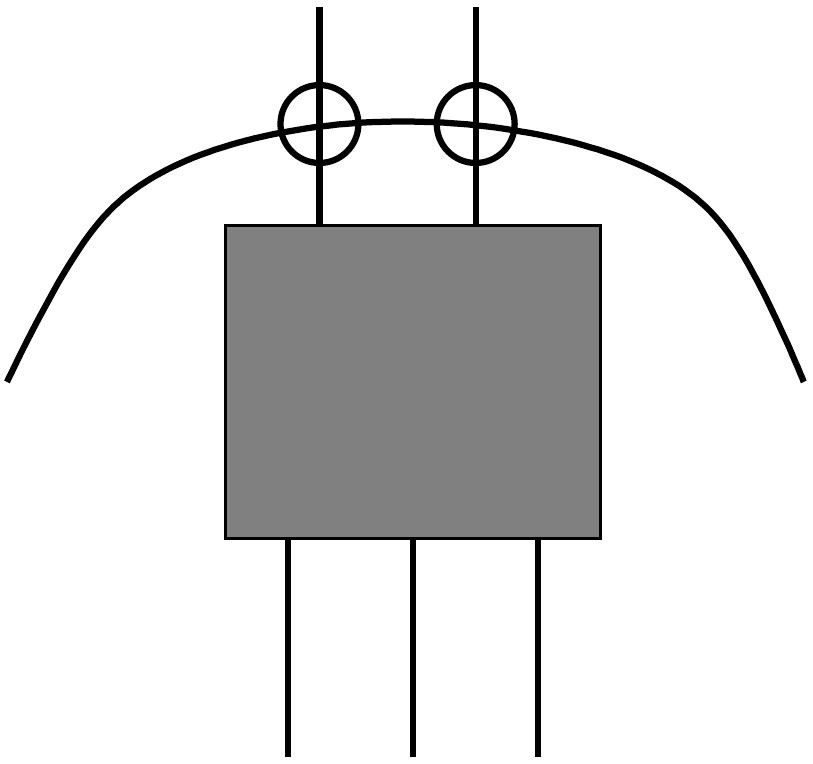}} \hspace{0.3cm}\longleftrightarrow \hspace{0.3cm} \raisebox{-25pt}{\includegraphics[height=0.8in]{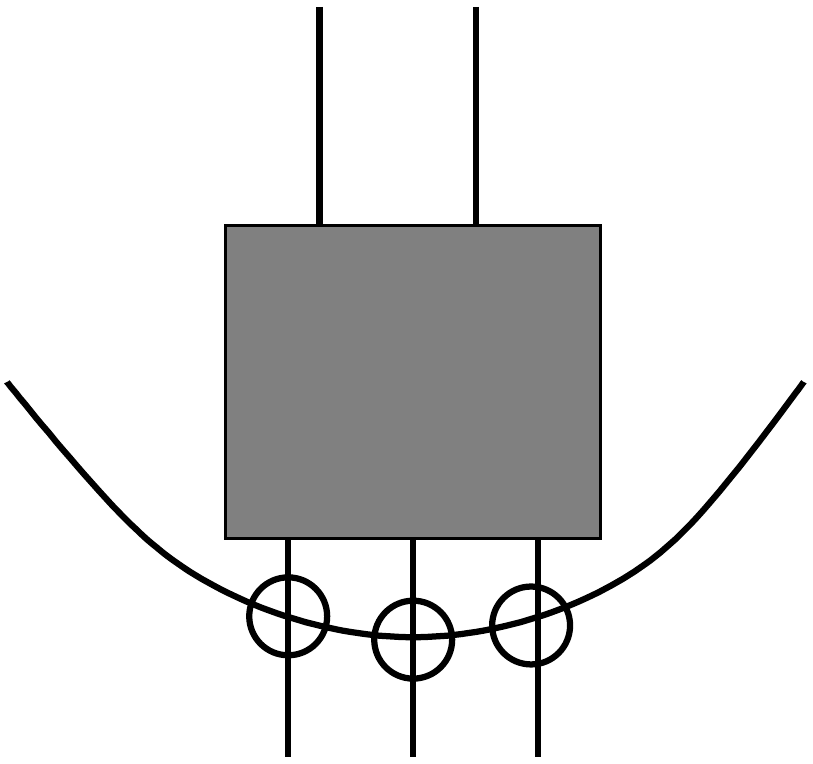}}
\]
\caption{The detour move}
 \label{fig:detour-move}
\end{figure}

%For simplicity, we will refer to these relations collectively as the \textit{detour move}. 
The detour move allows us to detour any portion of a braid to the front of the braid, where all of the new crossings (as interactions between the strands of the braids) are virtual. Using the detour move, in~\cite{AS} we derived a reduced presentation for $VSB_n$ using fewer generators. The idea behind this reduced presentation is that the detour move allows us to generate all of the elements of $VSB_n$ using only $\sigma_1, \sigma^{-1}_1, \tau_1$ and the set of all $v_i$, for $1\leq i \leq n-1$. 
Geometrically speaking, we detour the real crossings $\sigma_{i+1}^{\pm1}$ and singular crossings $\tau_{i+1}$ to the left side of the braid using the strands $1, 2, \dots, i$ as shown in Figure~\ref{fig:DRtau}. That is, in algebraic terms, we have:

\begin{eqnarray*}
\sigma_{i+1} ^{\pm 1}  & := & (v_i\ldots v_2v_1)(v_{i+1}\ldots v_3v_2)\sigma_1 ^{ \pm 1} (v_2v_3\ldots v_{i+1})(v_1v_2\ldots v_i)  \label{A14} \\  %sigma sigma inverse defining relation
\tau_{i+1} &:= & (v_i\ldots v_2v_1)(v_{i+1}\ldots v_3v_2)\tau_1 (v_2v_3\ldots v_{i+1})(v_1v_2\ldots v_i). \label{A15}  %tau defining relation 
\end{eqnarray*}

\begin{figure}[ht]
\[ \raisebox{-35pt}{\includegraphics[height=0.8in]{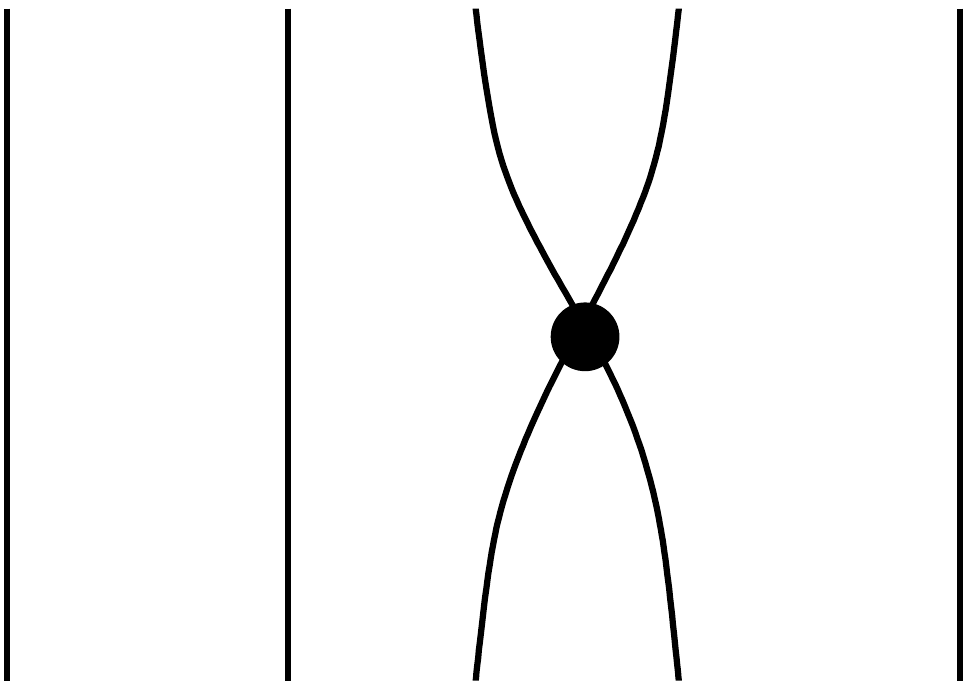}}
 \put(-77, -10){\fontsize{12}{8}$\dots$}
  \put(-20, -10){\fontsize{12}{8}$\dots$}
  \put(-60, 25){\fontsize{8}{8}$i$}
  \put(-50, 25){\fontsize{8}{8}$i+1$}
 \put(-30, 25){\fontsize{8}{8}$i+2$} \hspace{0.5cm}  \raisebox{-13pt}{=}  \hspace{0.5cm} 
\raisebox{-45pt}{\includegraphics[height=1in ]{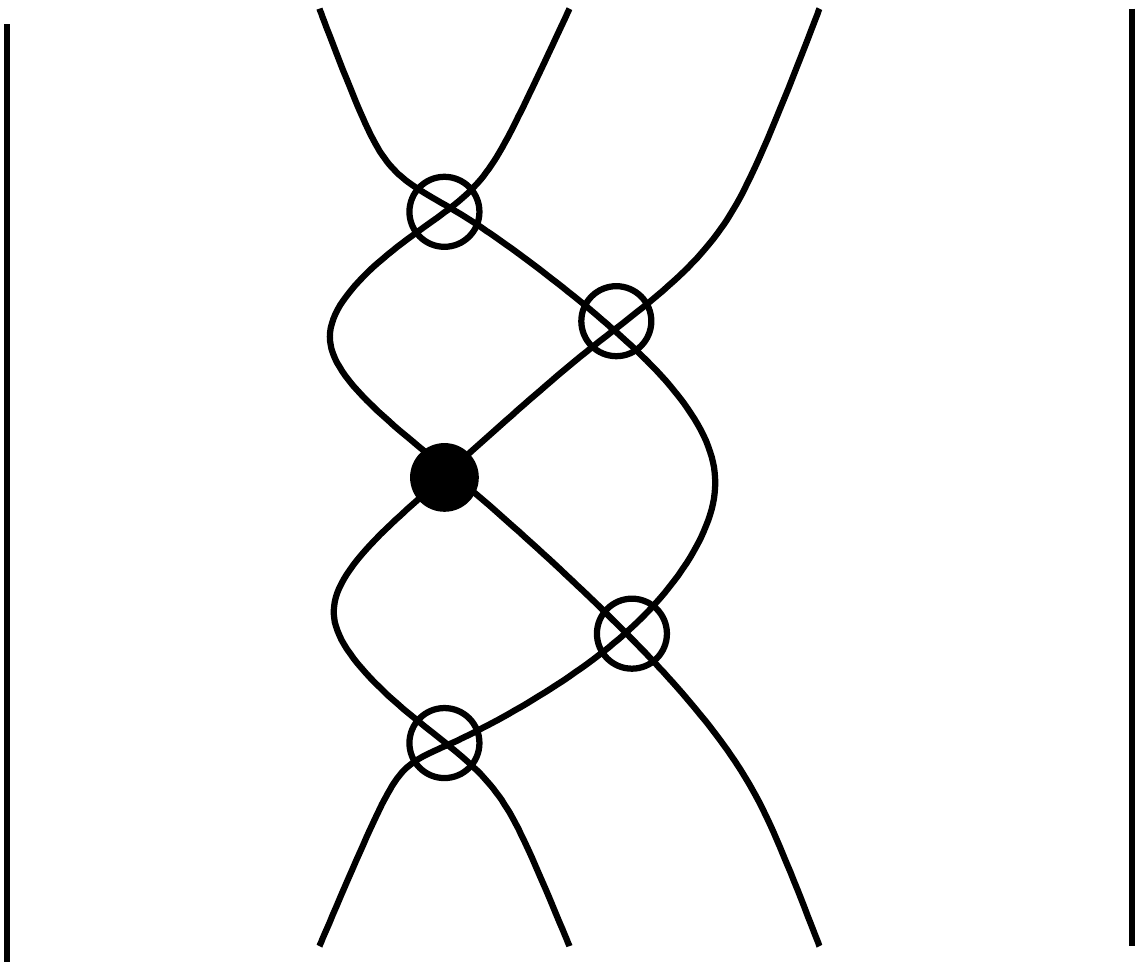}}
 \put(-80, -10){\fontsize{12}{8}$\dots$}
  \put(-22, -10){\fontsize{12}{8}$\dots$}
  \put(-63, 30){\fontsize{8}{8}$i$}
  \put(-51, 30){\fontsize{8}{8}$i+1$}
 \put(-31, 30){\fontsize{8}{8}$i+2$}\hspace{0.5cm} \raisebox{-13pt}{=} \hspace{0.6cm}
 \raisebox{-60pt}{\includegraphics[height=1.4in]{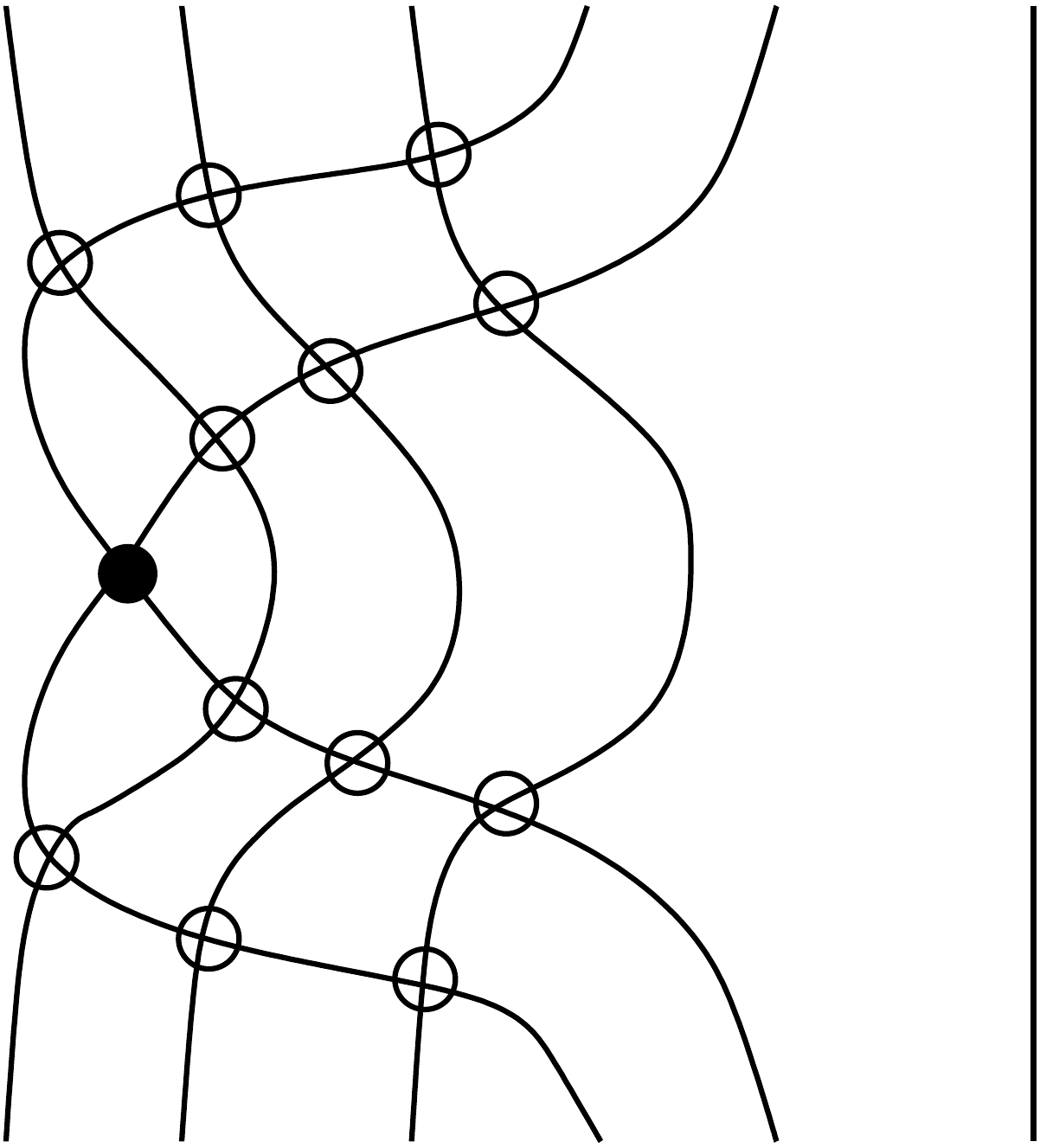}} 
  \put(-23, -10){\fontsize{12}{8}$\dots$}
    \put(-48, -10){\fontsize{12}{8}$\dots$}
     \put(-93, 43){\fontsize{8}{8}$1$}
      \put(-78, 43){\fontsize{8}{8}$2$}
       \put(-71, -55){\fontsize{12}{8}$\dots$}
          \put(-70, 33){\fontsize{12}{8}$\dots$}
    \put(-58, 43){\fontsize{8}{8}$i$}
  \put(-48, 43){\fontsize{8}{8}$i+1$}
 \put(-28, 43){\fontsize{8}{8}$i+2$}
  \put(-3, 43){\fontsize{8}{8}$n$}
 \]
 \caption{Detouring the crossing representing $\tau_{i+1}$}\label{fig:DRtau} 
\end{figure} 

%Theorem 4
\begin{theorem}[see Theorem 4 in~\cite{AS}]  \label{theorem:red}
$VSB_n$ has the following reduced presentation with generators $\{\sigma_1^{\pm1},\tau_1,v_1,v_2,\dots, v_{n-1}\}$ and relations:
\begin{align}
v_i^2&=1_n \\
       \sigma_1\sigma_1^{-1}&=\sigma_1^{-1}\sigma_1=1_n\\
       \sigma_1\tau_1&=\tau_1\sigma_1\\
       v_iv_jv_i&=v_jv_iv_j,|i-j|=1\\
       \sigma_1 ( v_1 v_2 \sigma_1 v_2 v_1 ) \sigma_1 &= (v_1 v_2 \sigma_1 v_2 v_1 ) \sigma_1(v_1 v_2 \sigma_1 v_2 v_1 )\\
   \tau_1( v_1 v_2 \sigma_1 v_2  v_1 ) \sigma_1  &= ( v_1 v_2 \sigma_1 v_2 v_1 )\sigma_1 (v_1 v_2 \tau_1 v_2 v_1 )\\
\tau_1 v_i = v_i \tau_1\,\,\, &\text{and}\,\, \,\sigma_1v_i=v_i\sigma_1,    \, i \geq 3\\  
              v_i v_j  &= v_j v_i, \,  \,|i -j| >1\\
      \tau_1 (v_2  v_1 v_3 v_2 \tau_1 v_2 v_3 v_1  v_2) &= (v_2  v_1v_3 v_2 \tau_1 v_2 v_3 v_1  v_2) \tau_1 \\
        \tau_1 (v_2  v_1 v_3 v_2 \sigma_1 v_2 v_3 v_1 v_2) &= (v_2  v_1 v_3v_2 \sigma_1 v_2 v_3 v_1  v_2)\tau_1\\
       \sigma_1 (v_2  v_1 v_3 v_2 \sigma_1 v_2 v_3 v_1  v_2)& = (v_2  v_1 v_3 v_2 \sigma_1 v_2 v_3 v_1  v_2)\sigma_1  
\end{align}
\end{theorem}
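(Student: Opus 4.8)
The plan is to pass from the standard presentation of Definition~\ref{def:vsbn} to the asserted one by a sequence of Tietze transformations, the essential input being the detour-move expressions for the higher-index generators displayed just above the statement. For each $i \ge 1$ the detour move gives
\begin{gather*}
\sigma_{i+1}^{\pm 1} = (v_i\cdots v_1)(v_{i+1}\cdots v_2)\,\sigma_1^{\pm 1}\,(v_2\cdots v_{i+1})(v_1\cdots v_i),\\
\tau_{i+1} = (v_i\cdots v_1)(v_{i+1}\cdots v_2)\,\tau_1\,(v_2\cdots v_{i+1})(v_1\cdots v_i),
\end{gather*}
and the first step is to confirm that these identities are consequences of the standard relations. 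This is precisely the geometric reduction pictured in Figure~\ref{fig:DRtau}; algebraically it amounts to pushing a crossing in position $i+1$ down to position $1$ by repeatedly applying the mixed relations $v_k\sigma_\ell v_k = v_\ell\sigma_k v_\ell$ and $v_k\tau_\ell v_k = v_\ell\tau_k v_\ell$ (for $|k-\ell|=1$) together with the commuting relations. Since these formulas express every $\sigma_i^{\pm 1}$ and $\tau_i$ with $i \ge 2$ as a word in $\{\sigma_1^{\pm 1},\tau_1,v_1,\dots,v_{n-1}\}$, Tietze transformations let us delete $\sigma_i^{\pm 1},\tau_i$ ($i \ge 2$) from the generating set, retaining only the reduced generators.

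It remains to rewrite each family of defining relations in the reduced alphabet and match it against the relations of the theorem. Several survive essentially unchanged: $v_i^2=1_n$ and $v_iv_jv_i=v_jv_iv_j$ reappear verbatim, while $\sigma_1\sigma_1^{-1}=\sigma_1^{-1}\sigma_1=1_n$ and $\sigma_1\tau_1=\tau_1\sigma_1$ are just the base ($i=1$) cases, and the commuting relations split into the two families $\{\tau_1v_i=v_i\tau_1,\ \sigma_1v_i=v_i\sigma_1 : i\ge 3\}$ and $\{v_iv_j=v_jv_i : |i-j|>1\}$. The low-index cases of the braid-type relations become the longer relations upon substitution: with $\sigma_2=v_1v_2\sigma_1v_2v_1$ and $\tau_2=v_1v_2\tau_1v_2v_1$, the relation $\sigma_1\sigma_2\sigma_1=\sigma_2\sigma_1\sigma_2$ becomes $\sigma_1(v_1v_2\sigma_1v_2v_1)\sigma_1=(v_1v_2\sigma_1v_2v_1)\sigma_1(v_1v_2\sigma_1v_2v_1)$ and the instance $\sigma_2\sigma_1\tau_2=\tau_1\sigma_2\sigma_1$ of the $RS3$ relation becomes the next listed one; similarly, substituting the detour words $\sigma_3=v_2v_1v_3v_2\sigma_1v_2v_3v_1v_2$ and $\tau_3=v_2v_1v_3v_2\tau_1v_2v_3v_1v_2$ turns the commuting relations $\sigma_1\sigma_3=\sigma_3\sigma_1$, $\tau_1\sigma_3=\sigma_3\tau_1$, $\tau_1\tau_3=\tau_3\tau_1$ into the final three relations of the theorem. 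This matching shows immediately that every relation of the reduced list is a genuine consequence of the standard relations.

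The main obstacle is the opposite implication: showing that all of the standard relations at \emph{general} indices $i$ (not merely $i=1,2,3$) are consequences of the finite reduced list. The mechanism is conjugation by the $v$'s: the submonoid they generate maps onto $S_n$ and realizes arbitrary shifts of strand positions, so conjugating a base relation by a suitable word in the $v_i$ reproduces the same relation at an arbitrary index. Making this rigorous requires verifying that the relations $v_i^2=1_n$, $v_iv_jv_i=v_jv_iv_j$ and $v_iv_j=v_jv_i$ ($|i-j|>1$) suffice to manipulate the conjugating words as permutations, and, crucially, that the mixed relations $v_k\sigma_\ell v_k=v_\ell\sigma_k v_\ell$ and $v_k\tau_\ell v_k=v_\ell\tau_k v_\ell$ of the standard presentation---now hidden inside the very definitions of $\sigma_j,\tau_j$---are recovered by a word computation in the $v_i$ alone. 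Checking that each higher-index instance of an $R3$-type relation collapses to a base instance once the conjugating $v$-words are commuted through is the lengthy but routine core of the argument. Once it is in place, the two implications combine to exhibit a sequence of Tietze transformations carrying the standard presentation to the reduced one, establishing the isomorphism.
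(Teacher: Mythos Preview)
The paper does not supply its own proof of this theorem: it is quoted from~\cite{AS} (``see Theorem~4 in~\cite{AS}'') and merely records the detour-move formulas for $\sigma_{i+1}^{\pm1}$ and $\tau_{i+1}$ as motivation. So there is no in-paper argument to compare against; your Tietze-transformation outline is exactly the standard route and is almost certainly what the cited paper does as well.

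Your sketch is sound in structure. The one place to be careful is the passage you flag as ``lengthy but routine'': to recover the mixed detour relations $v_k\sigma_\ell v_k=v_\ell\sigma_k v_\ell$ and $v_k\tau_\ell v_k=v_\ell\tau_k v_\ell$ at \emph{all} adjacent indices from the reduced list, and then to show that every higher-index instance of $R2$, $R3$, $RS3$, $RS1$ and the commuting relations reduces via $v$-conjugation to the listed base cases. This is genuinely where the proof lives, and brushing it off as routine hides the only nontrivial content. If you intend this as a complete proof rather than a roadmap, you should at minimum carry out one representative reduction explicitly (say, deriving $\sigma_i\sigma_{i+1}\sigma_i=\sigma_{i+1}\sigma_i\sigma_{i+1}$ for general $i$ from the $i=1$ case plus the $v$-relations) and indicate precisely which reduced relations are invoked at each step.
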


%%%%%%%%%%%%%%%%%%%%%%%%%%%%%%%%%%%%%%%%%%%%%%%%%%%%%%%%%%%%%%%%%%%%%%%%%
\section{Virtual singular braid monoid via fusing strings} \label{sec:newpres}

In this section we introduce a new presentation for the $n$-stranded virtual singular braid monoid, $VSB_n$. This presentation uses as generators a special type of pure virtual singular braids, which we now define.   
 
 \subsection{Fusing strings and their relations} \label{sec:FusingStrings}
 
 %Definition 5
  \begin{definition}\label{def:cstrings}
  The \textit{elementary fusing strings} $\mu_i, \mu_i^{-1}$ and $\gamma_i$, where $1 \leq i \leq n-1$, are $n$-stranded virtual singular braids defined as follows:
   \[\mu_i: = \sigma_iv_i , \hspace{0.5cm} \mu_i^{-1}: = v_i\sigma_i^{-1}, \hspace{0.5cm} \gamma_i: = \tau_iv_i, \,\,\, \text{where} \,\,\,1 \leq i \leq n-1\]
    \end{definition} 
    
    We remark that the elements $\mu_i$ and $\mu^{-1}_i$ are called `connecting strings' in~\cite{KL2}.
    
 Although the elementary fusing strings can be presented algebraically as elements of $VSB_n$, we think of them geometrically as abstract fusions between two adjacent braid strands, as depicted in Figure~\ref{fig:connecting-strings}.  
  \begin{figure}[ht]   \[\mu_i\hspace{.2cm}=\hspace{.2cm}\raisebox{-.7cm}{\includegraphics[height=.6in]{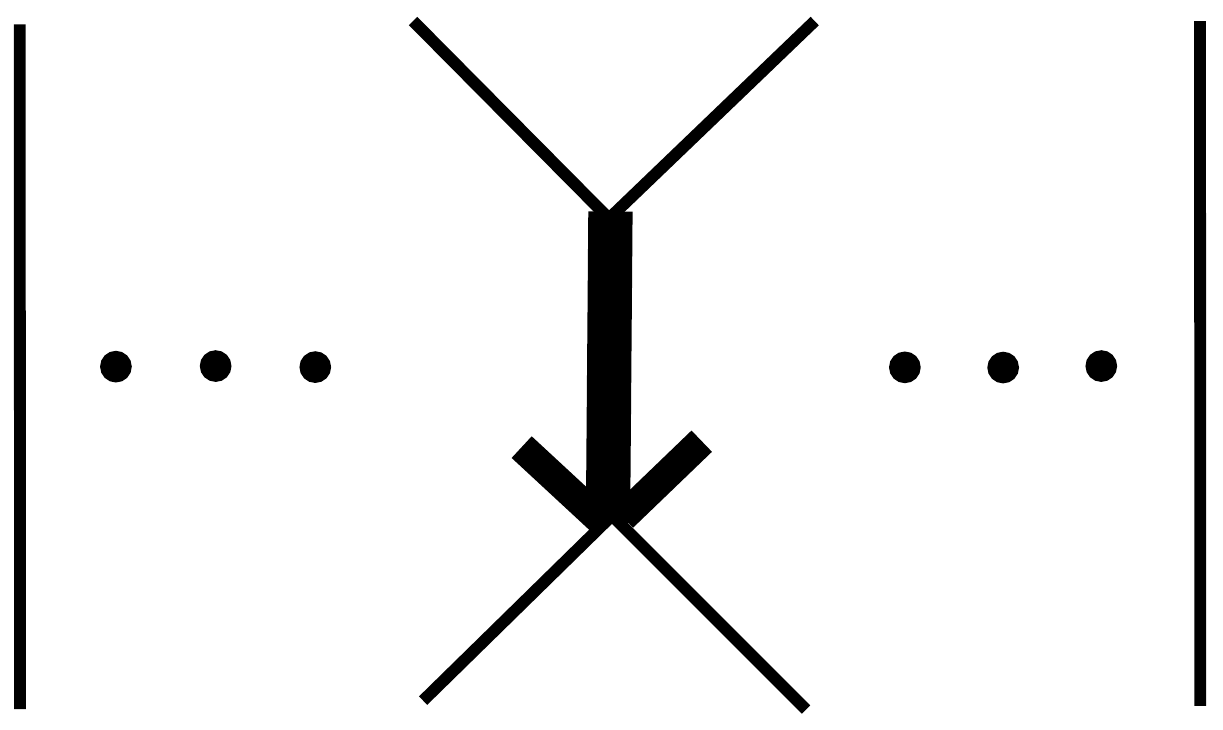}}\hspace{.2cm}=\hspace{.2cm}\raisebox{-.7cm}{\includegraphics[height=.6in]{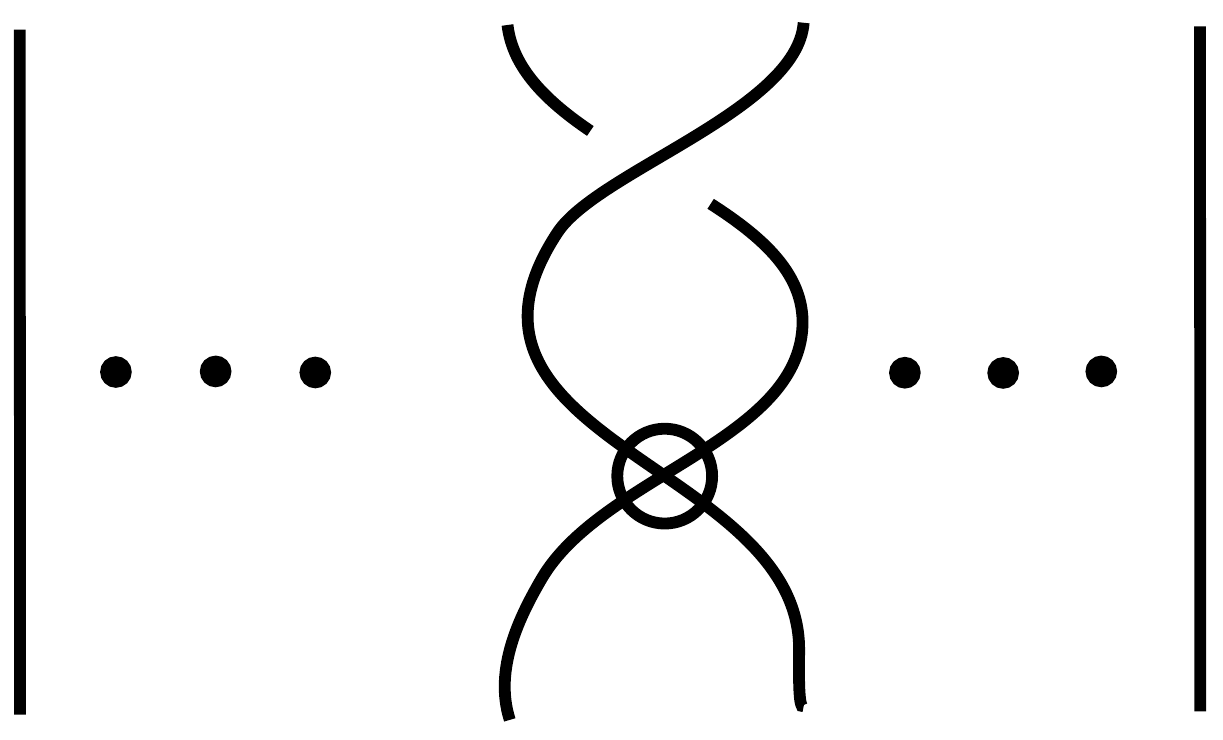}}\]
  \vspace{.2cm}
  \[\mu_i^{-1}\hspace{.2cm}=\hspace{.2cm}\raisebox{-.7cm}{\includegraphics[height=.6in]{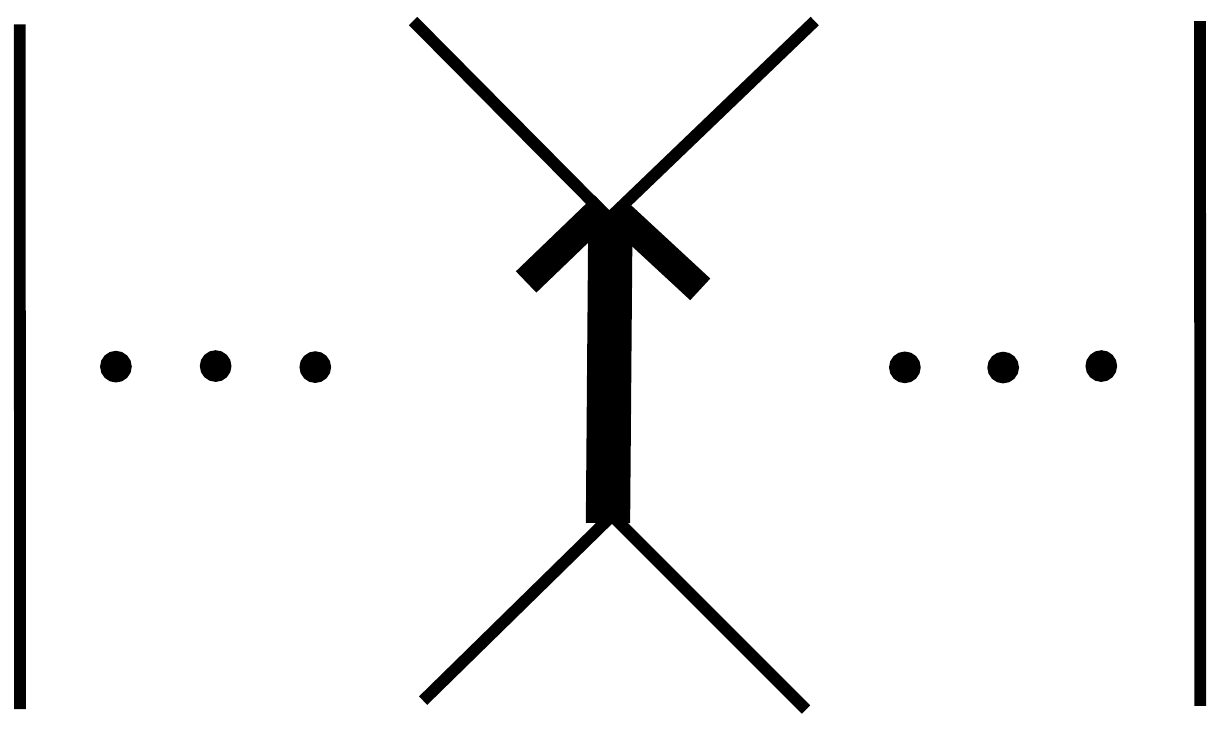}}\hspace{.2cm}=\hspace{.2cm}\raisebox{-.7cm}{\includegraphics[height=.6in]{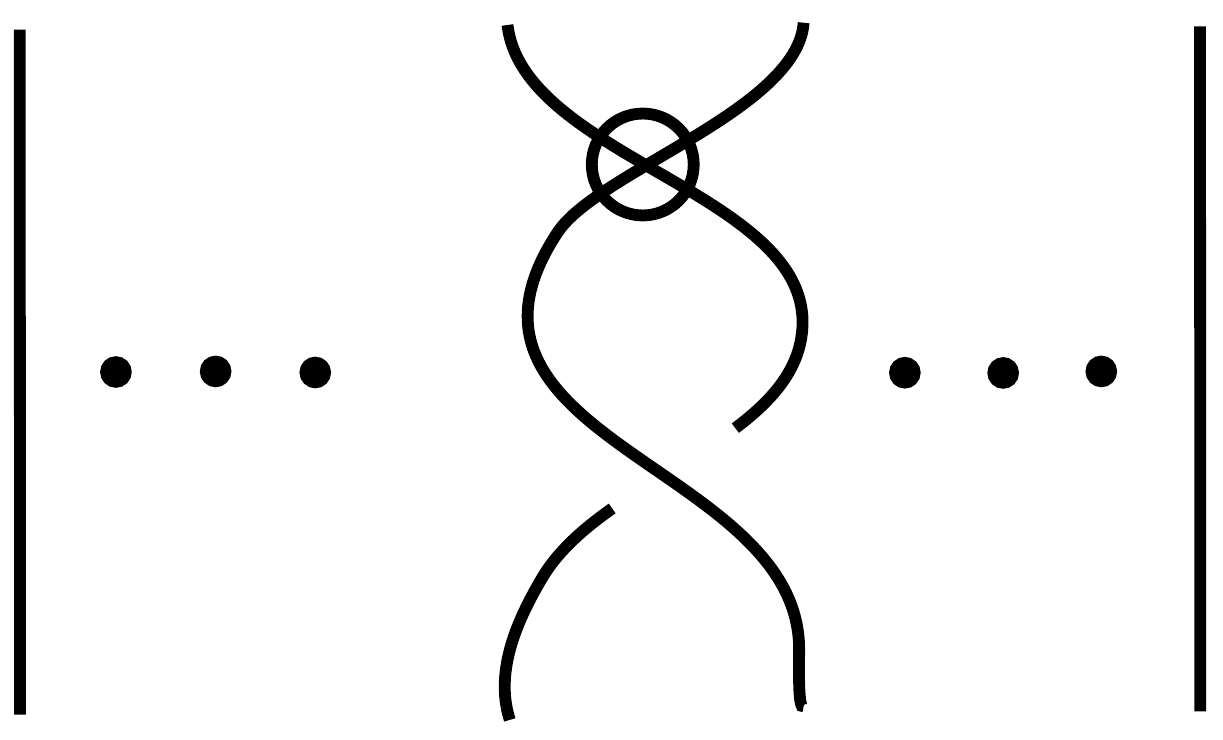}}\hspace{.2cm}\]
  \vspace{.2cm}
  \[\gamma_i\hspace{.2cm}=\hspace{.2cm}\raisebox{-.7cm}{\includegraphics[height=.6in]{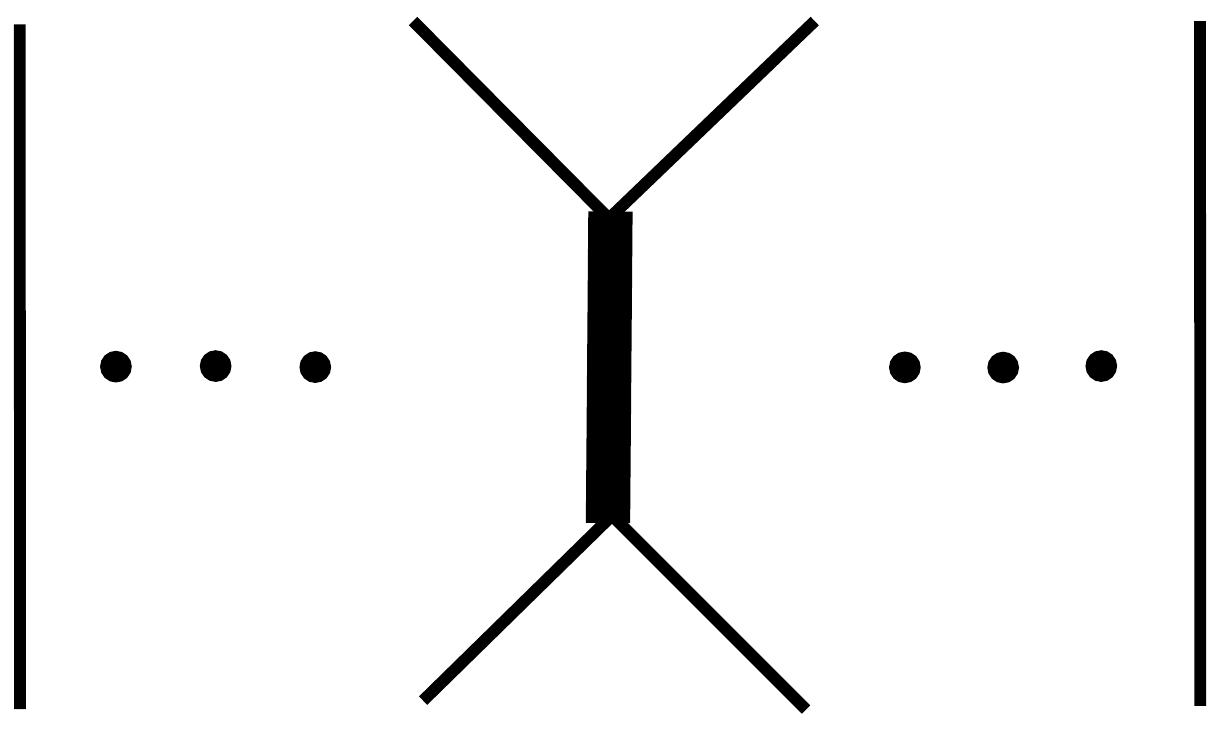}}\hspace{.2cm}=\hspace{.2cm}\raisebox{-.7cm}{\includegraphics[height=.6in]{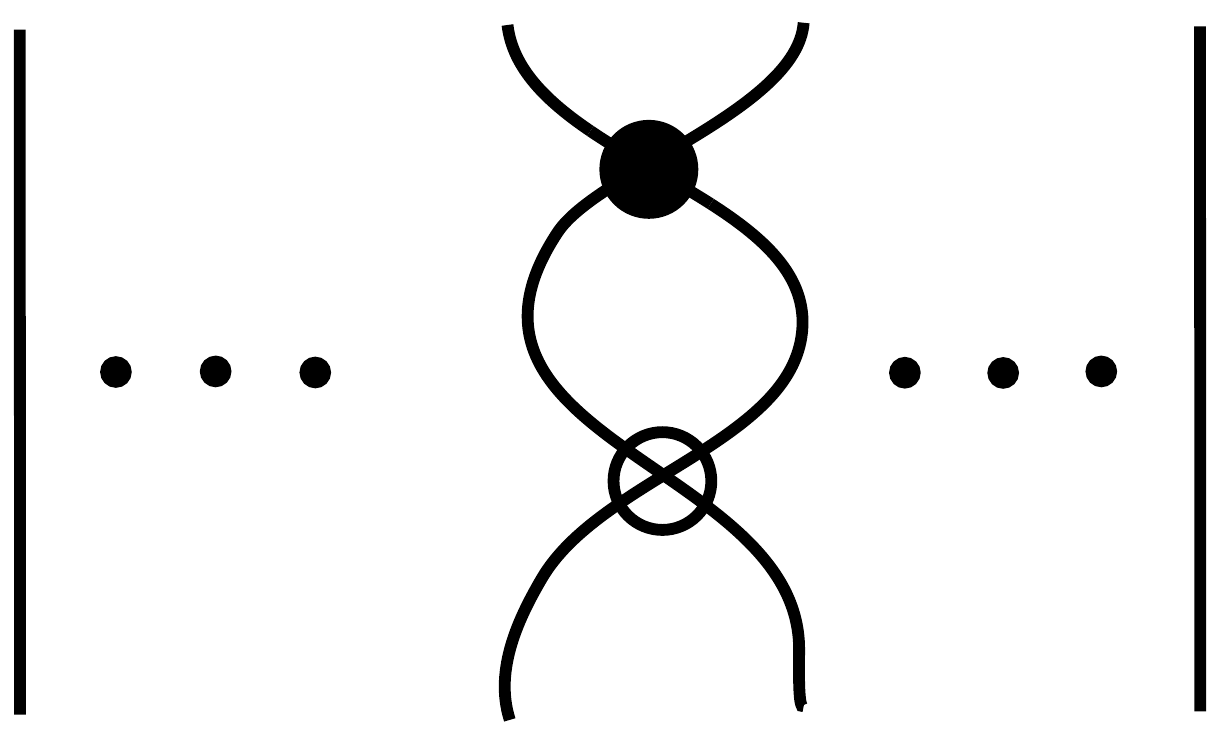}}\]
   \caption{Elementary connecting strings} \label{fig:connecting-strings}
 \end{figure}

  Our motivation for considering the elementary fusing strings stems from the fact that $\mu_i$ is an invertible solution to the algebraic Yang-Baxter equation. Further, there exists a relation between $\mu_i$ and $\gamma_i$ that closely resembles the Yang-Baxter equation. We will return to this later.

It is easy to see that the permutation associated with the elementary connecting strings is the identity permutation of $S_n$. That is, elementary fusing strings are virtual singular pure braids.

Using the defining relations for $\mu_i, \mu_i^{-1}$ and $\gamma_i$ given in Definition~\ref{def:cstrings}, we can describe the elementary virtual singular braids $\sigma_i,\sigma_i^{-1},\text{and } \tau_i$ in terms of the elementary fusing strings:
   \[\sigma_i=\mu_{i}v_i,\,\,\,\,\,\,\,\,\,\,\,\,\,\sigma_i^{-1}=v_i\mu_{i}^{-1},\,\,\,\,\,\,\,\,\,\,\,\,\,\tau_i=\gamma_{i}v_{i}.\]

Thus, the fusing strings $\mu_{i}$, $\mu_i^{-1}$, and $\gamma_{i}$, along with the virtual generators $v_i$, can be used as an alternative set of generators for the $n$-stranded virtual singular braid monoid, $VSB_n$. The following lemmas will provide a set of relations that the elementary fusing strings satisfy. In the proof of each lemma, we underline the portion of the word that is modified in the next line of the proof, and indicate which relation from Definition \ref{def:vsbn} is used to modify it. The first lemma introduces a version of the detour move for elementary fusing strings. 
  
  %Lemma 6                                                                                                                                                                                                                                                                                                                                                                                             
\begin{lemma}\label{lemma:detour}
The following relations hold in $VSB_n$, for all $|i-j|=1$: 
 \[v_i\mu_{j}v_{i}=v_{j}\mu_{i}v_{j}\hspace{2cm}v_i\gamma_{j}v_{i}=v_{j}\gamma_{i}v_{j}\] 
 \[ \raisebox{-22pt}{\includegraphics[height=.7in]{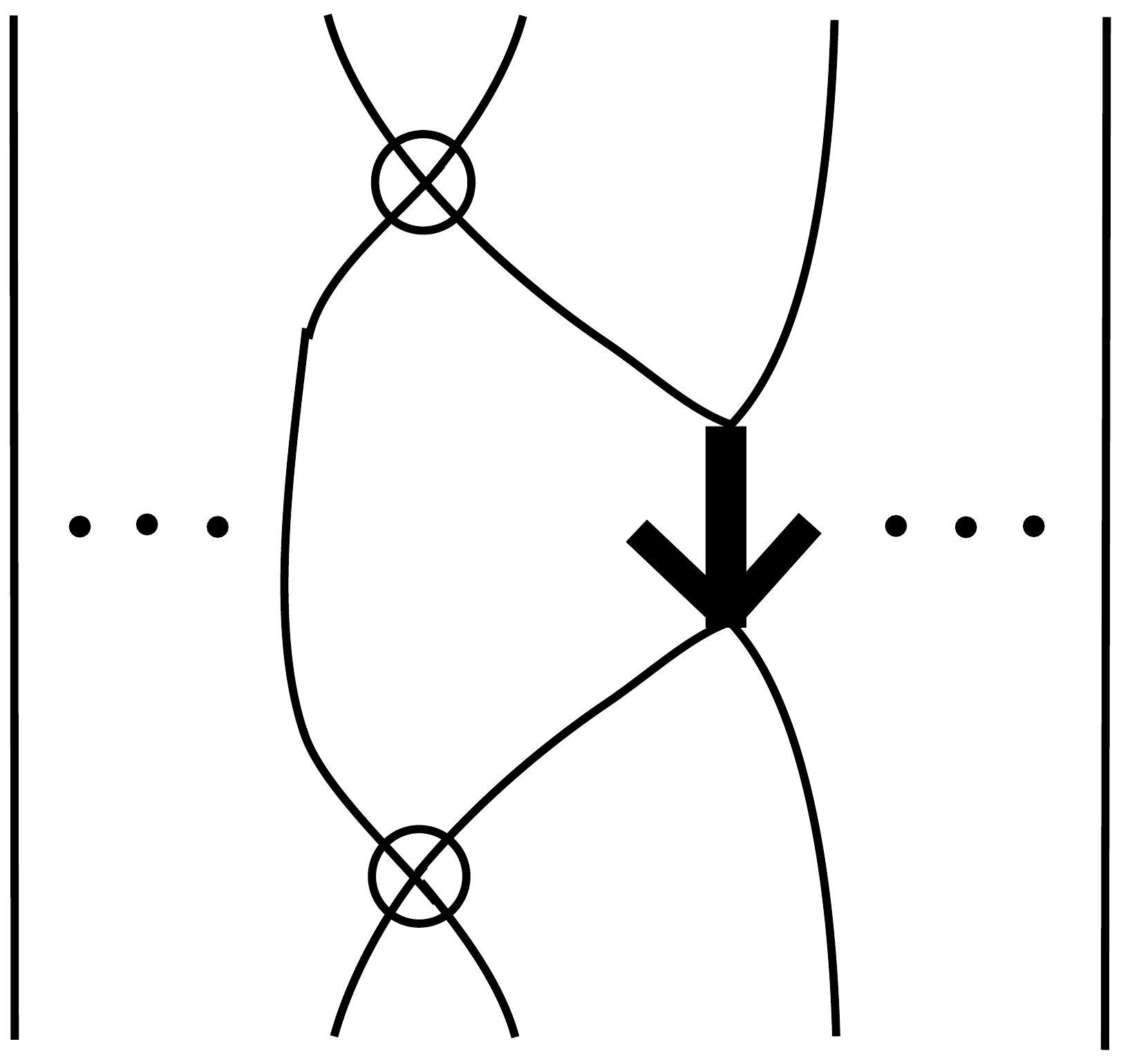}}\hspace{.2cm} \longleftrightarrow\hspace{.2cm}  \raisebox{-22pt}{\includegraphics[height=.7in]{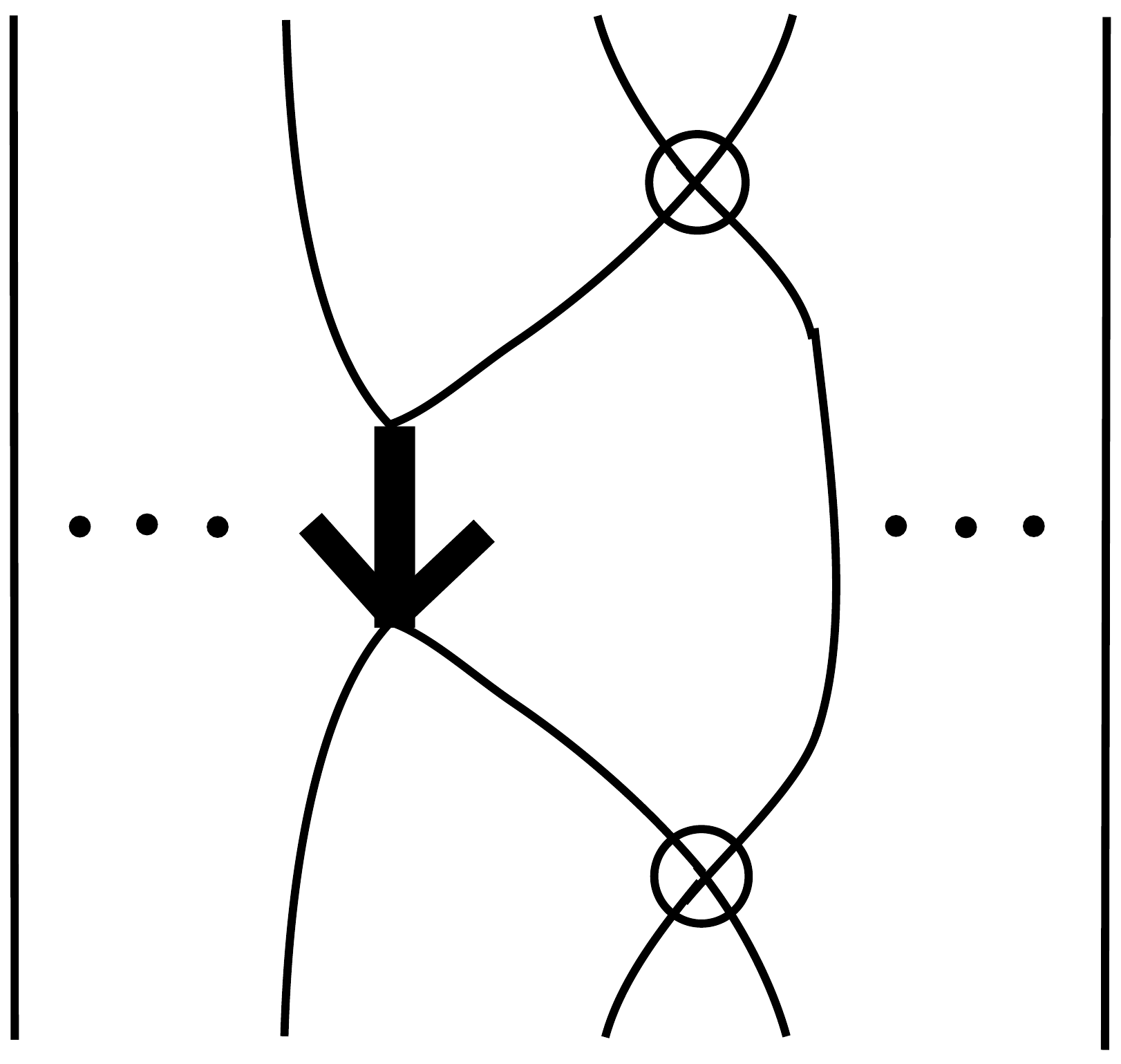}}\hspace{1.8cm}\raisebox{-22pt}{\includegraphics[height=.7in]{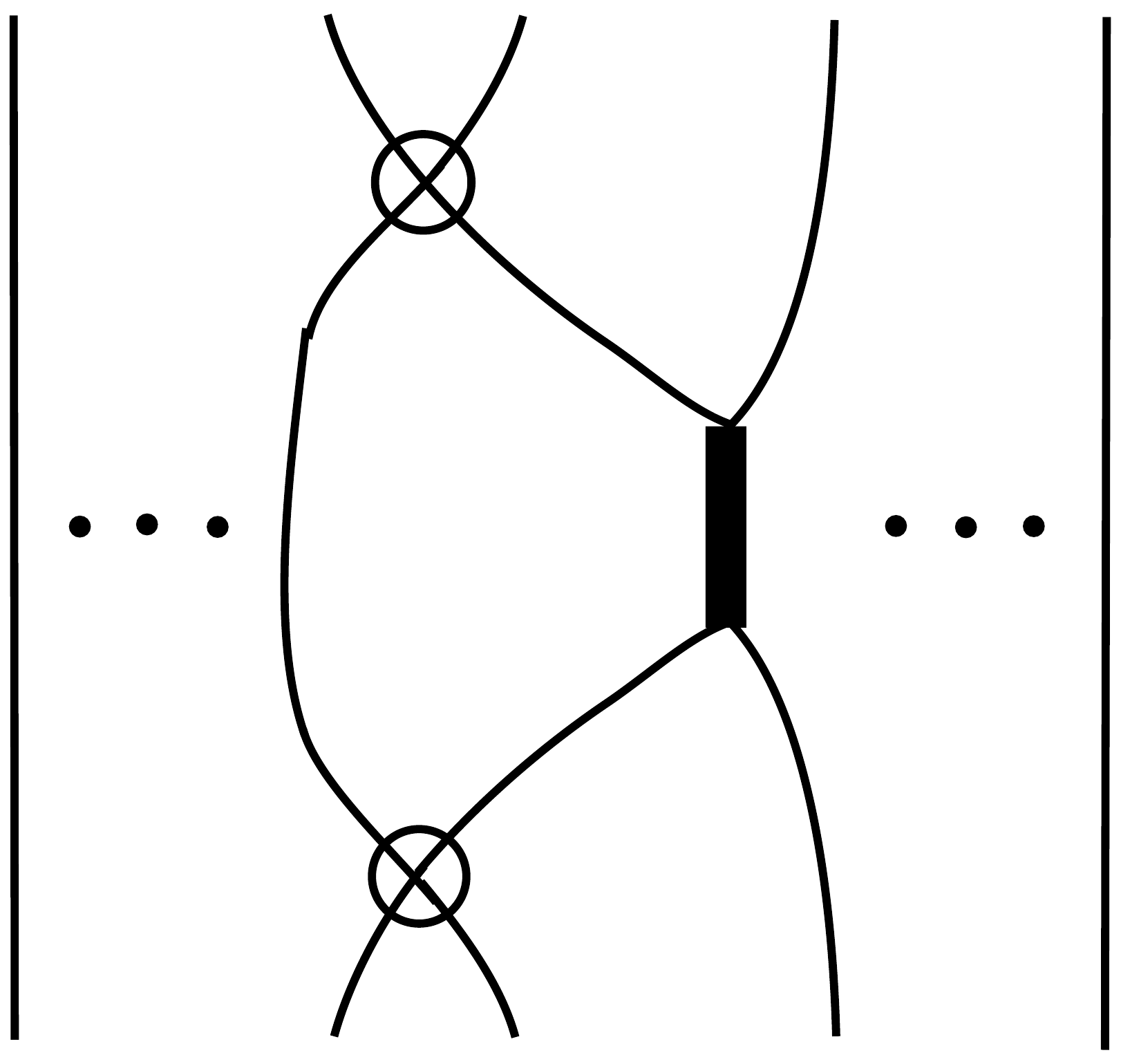}}\hspace{.2cm} \longleftrightarrow\hspace{.2cm}  \raisebox{-22pt}{\includegraphics[height=.7in]{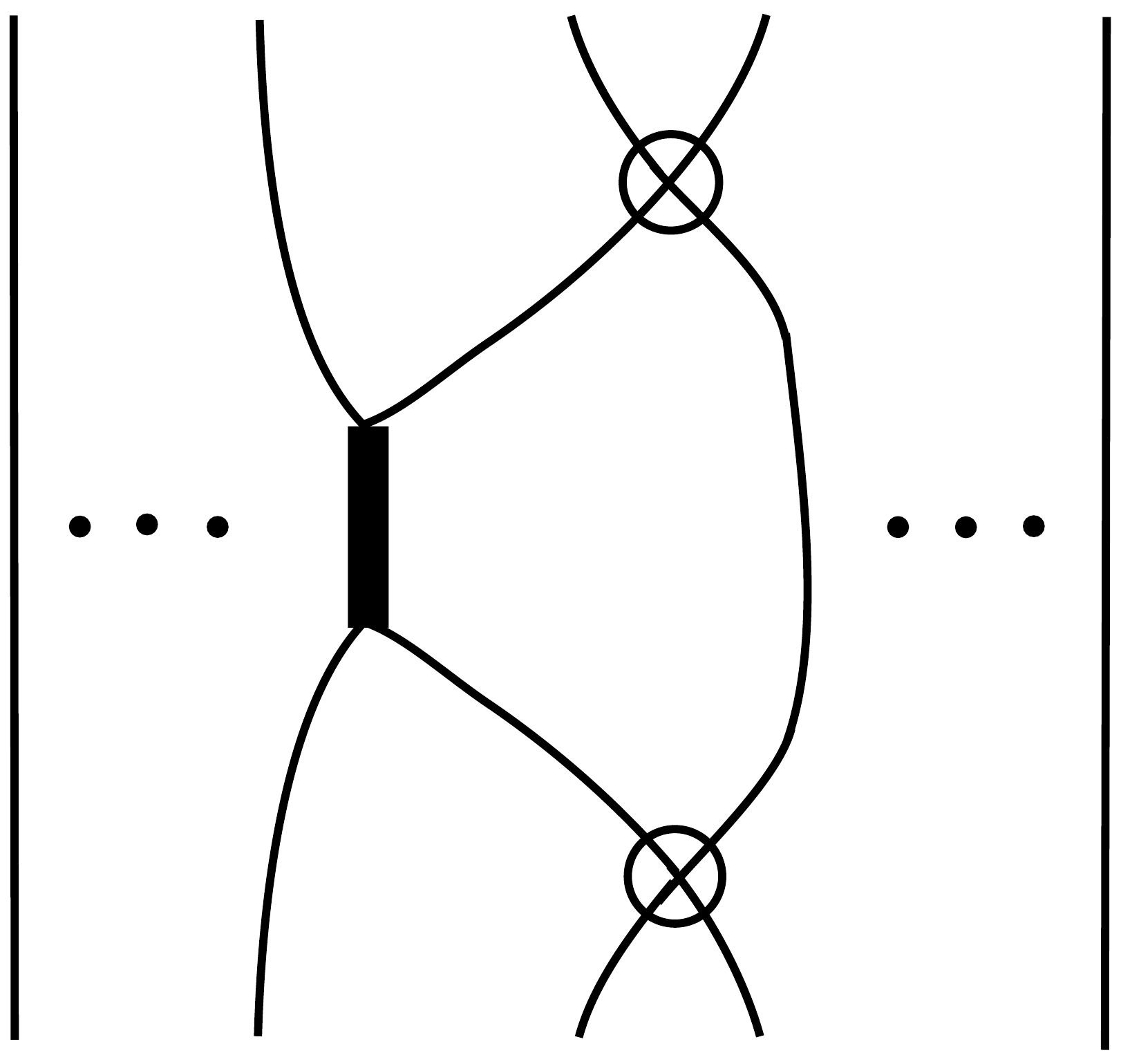}}
\]
\end{lemma}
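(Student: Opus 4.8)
The plan is to prove each of the two identities by direct substitution of the definitions $\mu_i = \sigma_i v_i$, $\mu_j = \sigma_j v_j$, $\gamma_i = \tau_i v_i$, $\gamma_j = \tau_j v_j$, and then reduce everything to the already-established $R3$-type relations from Definition~\ref{def:vsbn}. I would treat the two statements separately but in parallel, since they have identical structure with $\sigma$ replaced by $\tau$. For the first identity, the left-hand side becomes $v_i \mu_j v_i = v_i \sigma_j v_j v_i$ and the right-hand side becomes $v_j \mu_i v_j = v_j \sigma_i v_i v_j$, so the goal is to show $v_i \sigma_j v_j v_i = v_j \sigma_i v_i v_j$ using the relations $V3$ (namely $v_i v_j v_i = v_j v_i v_j$ for $|i-j|=1$) and $VR3$ (namely $v_i \sigma_j v_i = v_j \sigma_i v_j$).

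First I would rewrite the left-hand side so that the relation $VR3$ can be applied. The natural move is to insert a factor $v_i v_i = 1_n$ (justified by $V2$) into the word $v_i \sigma_j v_j v_i$ so as to expose a subword of the form $v_i \sigma_j v_i$, then replace that subword by $v_j \sigma_i v_j$ via $VR3$, and finally collapse the remaining virtual generators using $V2$ and $V3$. Concretely, I expect the chain to run roughly as $v_i \sigma_j v_j v_i = (v_i \sigma_j v_i)(v_i v_j v_i)$, where the first parenthesized block is $v_j \sigma_i v_j$ by $VR3$ and the second is $v_j v_i v_j$ by $V3$; multiplying these and cancelling an adjacent $v_j v_j = 1_n$ should leave $v_j \sigma_i v_i v_j = v_j \mu_i v_j$, as desired. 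The second identity is handled identically, invoking $VS3$ (namely $v_i \tau_j v_i = v_j \tau_i v_j$) in place of $VR3$ at the key step, with $V2$ and $V3$ playing the same bookkeeping roles.

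The main obstacle is purely organizational rather than conceptual: getting the virtual generators to line up so that exactly one application of $VR3$ (respectively $VS3$) suffices, and tracking the cancellations from $V2$ and $V3$ without error. The delicate point is the order in which the $v_i$'s are grouped, since a careless insertion of $v_i v_i$ can produce a subword that does not match the left-hand side of any available relation. I would therefore underline at each line the subword being rewritten and label the relation used, following the convention announced just before the lemma statement, so that the reader can verify that every step is an instance of $V2$, $V3$, $VR3$, or $VS3$ and nothing stronger is smuggled in. Since each identity is symmetric in $i$ and $j$ and every relation used is itself symmetric under the swap $i \leftrightarrow j$, no separate argument for the reverse direction is needed; the computed equality already gives the stated biconditional.
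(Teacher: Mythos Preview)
Your proposal is correct and follows essentially the same route as the paper's proof: expand $\mu_j$ (respectively $\gamma_j$) via its definition, insert an identity using $V2$, apply $V3$ together with $VR3$ (respectively $VS3$), and then cancel with $V2$ to recognise $\mu_i$ (respectively $\gamma_i$). The paper inserts $v_j v_j$ at the right end and then uses $V3$ before $VS3$, whereas you insert $v_i v_i$ in the middle so that $VR3$ and $V3$ can be applied to two disjoint blocks simultaneously; this is a cosmetic reordering, not a different argument. One small remark: your closing sentence about a ``biconditional'' and a ``reverse direction'' is unnecessary, since the claim is simply an equality in $VSB_n$ and your chain of equalities already establishes it.
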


\begin{proof} The desired relations follow from the detour relations of $VSB_n$.
For the second relation we have:
\begin{align*}
v_i\underline{\gamma_{j}}v_i&\,\,{=}\,\,v_i\tau_{j}v_{j}v_i\underline{1_n}&{V2}\\
&\,\,{=}\,\,v_i\tau_{j}\underline{v_{j}v_iv_{j}}v_{j}&{V3}\\
&\,\,{=}\,\,\underline{v_i\tau_{j}v_{i}}v_{j}v_{i}v_{j}&{VS3}\\
&\,\,{=}\,\,v_{j}\tau_{i}\underline{v_{j}v_{j}}v_{i}v_{j}&{V2}\\
&\,\,{=}\,\,v_{j}\underline{\tau_{i}v_{i}}v_{j}&\\
&\,\,{=}\,\,v_{j}\gamma_iv_{j}.
\end{align*}
The first relation in the statement is proved in the same way, by replacing $\gamma_i$ and $\tau_i$ by $\mu_i$ and $\sigma_i$, respectively. 
\end{proof}
%Remark 7
\begin{remark}
For every $|i-j|=1$, the relations 
\[ \mu_{j}v_{i}v_{j}=v_iv_{j}\mu_{i}\hspace{.3in} \text{and}\hspace{.3in} \gamma_{j}v_{i}v_{j}=v_{i}v_{j}\gamma_{i}\]
 are equivalent to the relations of Lemma~\ref{lemma:detour}. We derive them, algebraically, by multiplying the relations in Lemma \ref{lemma:detour} on the left by $v_i$ and on the right by $ v_{j}$, as shown below:
\[{ v_i}(v_i\mu_{j}v_{i})v_{j}={v_i}(v_{j}\mu_{i}v_{j}){v_{j}}\hspace{.3in}\text{and}\hspace{.3in}v_i(v_i\gamma_{j}v_{i})v_{j}={v_i}(v_j\gamma_{i}v_{j})v_{j}.\]

We then apply the identity relations $v_i^2=1_n$ and $v_j^2=1_n$, to obtain the desired relations.
\end{remark}

In the next lemma, we present the Yang-Baxter type relations for fusing strings. Naturally, they follow from the $R3$-type relations of $VSB_n$. 

%Lemma 8
\begin{lemma}\label{lemma:R3}
The following relations hold in $VSB_n$, for all $|i-j|=1$:
\[ \mu_{j}(v_{j}\mu_{i}v_{j})\mu_i=\mu_{i}(v_{j}\mu_{i}v_{j})\mu_{j}\]
\[\mu_{j}(v_{j}\mu_{i}v_{j})\gamma_i=\gamma_{i}(v_{j}\mu_{i}v_{j})\mu_{j}\]
\end{lemma}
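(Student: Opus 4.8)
The plan is to prove both identities by the same mechanism: rewrite each fusing string through its definition ($\mu_i=\sigma_iv_i$, $\gamma_i=\tau_iv_i$), cancel the adjacent virtual crossings that this produces using $V2$, and then use the mixed relations $VR3$ and $VS3$ together with the purely virtual relation $V3$ to slide all virtual crossings to the right-hand end of the word. What is left on the left is then a short word in the classical (and, for the second identity, the singular) generators that is exactly an instance of $R3$ (respectively $RS3$), and that relation closes the argument.

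For the first identity I would expand
\[
\mu_j(v_j\mu_iv_j)\mu_i=(\sigma_jv_j)(v_j\sigma_iv_iv_j)(\sigma_iv_i),
\]
collapse $v_jv_j=1_n$ by $V2$, and then repeatedly apply the bead-sliding move $v_iv_j\sigma_i=\sigma_jv_iv_j$ (a direct consequence of $VR3$) and the braid relation $v_iv_jv_i=v_jv_iv_j$ ($V3$). A short computation reduces the left-hand side to $\sigma_j\sigma_i\sigma_j\,(v_jv_iv_j)$. Performing the identical normalization on the right-hand side $\mu_i(v_j\mu_iv_j)\mu_j$ yields $\sigma_i\sigma_j\sigma_i\,(v_jv_iv_j)$, with the same trailing virtual word. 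Since $\sigma_i\sigma_j\sigma_i=\sigma_j\sigma_i\sigma_j$ by $R3$, the two sides coincide.

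The second identity is treated in exactly the same way; the only new feature is that $\gamma_i=\tau_iv_i$ contributes a singular crossing. Here I would use the slide $v_iv_j\tau_i=\tau_jv_iv_j$ coming from $VS3$ for the factor carrying the $\tau$, and $VR3$ and $V3$ as before for the remaining virtual crossings. The left-hand side then reduces to $\sigma_j\sigma_i\tau_j\,(v_jv_iv_j)$, and applying $RS3$ with the roles of $i$ and $j$ interchanged, namely $\sigma_j\sigma_i\tau_j=\tau_i\sigma_j\sigma_i$, turns this into $\tau_i\sigma_j\sigma_i\,(v_jv_iv_j)$. The right-hand side $\gamma_i(v_j\mu_iv_j)\mu_j$ normalizes directly to the same word $\tau_i\sigma_j\sigma_i\,(v_jv_iv_j)$, which finishes the proof.

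These computations are routine bead-sliding once the pattern is fixed, so the only genuine obstacle is bookkeeping: one must keep careful track of which of the two adjacent virtual crossings cancels at each step, and must apply $RS3$ in its index-swapped form rather than as literally stated in Definition~\ref{def:vsbn}. As an alternative that would shorten the virtual manipulations, one could first invoke Lemma~\ref{lemma:detour} to replace the central factor $v_j\mu_iv_j$ by $v_i\mu_jv_i$; but the direct expansion above keeps the two sides manifestly parallel and makes the final appeals to $R3$ and $RS3$ transparent.
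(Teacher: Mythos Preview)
Your proof is correct and follows essentially the same route as the paper: expand the fusing strings via their definitions, cancel adjacent virtuals with $V2$, slide the remaining virtual crossings to one end using $VR3$/$VS3$ and $V3$, and then invoke $R3$ (respectively $RS3$). The only cosmetic difference is that you normalize both sides to the common form $\sigma_\bullet\sigma_\bullet\sigma_\bullet\,(v_jv_iv_j)$ (or its singular analogue) and compare, whereas the paper transforms the left-hand side directly into the right-hand side, inserting a $v_jv_j=1_n$ midway to rebuild the fusing-string factors.
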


\begin{proof} 
For the first relation we have:
\begin{align*}
\mu_{j}(v_{j}\mu_{i}v_{j})\mu_i&\,\,{=}\,\,\sigma_{j}\underline{v_{j}v_{j}}\sigma_iv_iv_{j}\sigma_iv_i&{V2}\\
&\,\,{=}\,\,\sigma_{j}\sigma_i\underline{v_iv_{j}\sigma_i}v_i&{VR3}\\
&\,\,{=}\,\,\underline{\sigma_{j}\sigma_i\sigma_{j}}\,\,\,\underline{v_iv_{j}v_i}&{R3,V3}\\
&\,\,{=}\,\,\sigma_{i}\sigma_{j}\underline{\sigma_{i}v_{j}v_{i}}v_{j}&{VR3}\\
&\,\,{=}\,\,\sigma_{i}\sigma_{j}v_{j}v_{i}\sigma_{j}v_{j}.
\end{align*}

Next, we insert the identity to obtain the following:
\begin{align*}
\mu_{j}(v_{j}\mu_{i}v_{j})\mu_i&\,\,{=}\,\,\sigma_{i}\sigma_{j}v_{j}v_{i}\underline{1_n}\sigma_{j}v_{j}&{V2}\\
&\,\,{=}\,\,\sigma_{i}\sigma_{j}\underline{v_{j}v_{i}v_{j}}v_{j}\sigma_{j}v_{j}&{V3}\\
&\,\,{=}\,\,\sigma_{i}\underline{\sigma_{j}v_{i}v_{j}}v_{i}v_{j}\sigma_{j}v_{j}&{VR3}\\
&\,\,{=}\,\,\underline{\sigma_{i}v_{i}}v_{j}\underline{\sigma_{i}v_{i}}v_{j}\underline{\sigma_{j}v_{j}}&\\
&\,\,{=}\,\,\mu_i(v_{j}\mu_iv_{j})\mu_{j},
\end{align*}
where the first and last equality hold due to the defining relation for the elementary fusing strings $\mu_i$.

For the second relation we have:
\begin{align*}
\mu_{j}(v_{j}\mu_{i}v_{j})\gamma_i&\,\,{=}\,\,\sigma_{j}\underline{v_{j}v_{j}}\sigma_iv_iv_{j}\tau_iv_i&{V2}\\
&\,\,{=}\,\,\sigma_{j}\sigma_i\underline{v_iv_{j}\tau_i}v_i&{VS3}\\
&\,\,{=}\,\,\underline{\sigma_{j}\sigma_i\tau_{j}}\,\,\,\underline{v_iv_{j}v_i}&{RS3,V3}\\
&\,\,{=}\,\,\tau_{i}\sigma_{j}\underline{\sigma_{i}v_{j}v_{i}}v_{j}&{VR3}\\
&\,\,{=}\,\,\tau_{i}\sigma_{j}v_{j}v_{i}\underline{1_n}\sigma_{j}v_{j}&{V2}\\
&\,\,{=}\,\,\tau_{i}\sigma_{j}\underline{v_{j}v_{i}v_{j}}v_{j}\sigma_{j}v_{j}&\text{V3}\\
&\,\,{=}\,\,\tau_{i}\underline{\sigma_{j}v_{i}v_{j}}v_{i}v_{j}\sigma_{j}v_{j}&{VR3}\\
&\,\,{=}\,\,\underline{\tau_{i}v_{i}}v_{j}\underline{\sigma_{i}v_{i}}v_{j}\underline{\sigma_{j}v_{j}}&\\
&\,\,{=}\,\,\gamma_i(v_{j}\mu_iv_{j})\mu_{j}.
\end{align*}
Again, the first and last equality above hold due to the defining relations for the elementary fusing strings $\mu_i$ and $\gamma_i$.
\end{proof}

%Lemma 9
\begin{lemma}\label{lemma:fc} 
The following relations hold in $VSB_n$, for $|i-j|>1$:
\[\alpha_i\beta_j=\beta_j\alpha_i,\,\,\,\,\,\text{for } \alpha_i,\beta_i\in \{\mu_i,\gamma_i,v_i\}\]
 \end{lemma}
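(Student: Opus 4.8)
The plan is to prove the commuting relations $\alpha_i\beta_j=\beta_j\alpha_i$ for $|i-j|>1$ by unwinding each elementary fusing string into its defining expression in terms of the generators $\sigma_k^{\pm1}, \tau_k, v_k$, and then appealing to the commuting relations (relation~(9) in Definition~\ref{def:vsbn}) for those underlying generators. Recall that $\mu_i=\sigma_iv_i$, $\gamma_i=\tau_iv_i$, and $\mu_i^{-1}=v_i\sigma_i^{-1}$, so each fusing string with index $i$ is a word in generators whose indices lie in $\{i\}$; likewise each $\beta_j$ with $|i-j|>1$ is a word in generators with indices in $\{j\}$. Since $|i-j|>1$ guarantees that every generator appearing in $\alpha_i$ commutes with every generator appearing in $\beta_j$ (by relation~(9), which holds for \emph{any} choice of generators $g,h\in\{\sigma,\tau,v\}$ with indices differing by more than one), the two words commute as a whole.

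Concretely, I would first reduce the number of cases. There are three choices for $\alpha_i\in\{\mu_i,\gamma_i,v_i\}$ and three for $\beta_j$, so nine relations in principle; but the case $\alpha_i=v_i$ or $\beta_j=v_j$ is already covered by relation~(9) directly, and the symmetry $\alpha_i\beta_j=\beta_j\alpha_i$ means I only need to verify each unordered pair once. Thus it suffices to treat the representative cases such as $\mu_i\mu_j=\mu_j\mu_i$ and $\mu_i\gamma_j=\gamma_j\mu_i$ (the pairs involving $v_i$ being immediate), matching the style of the preceding lemmas where only one or two cases are written out in full. For the case $\mu_i\mu_j$, I would expand to $\sigma_iv_i\sigma_jv_j$ and then commute the middle factors: $v_i\sigma_j=\sigma_jv_i$ and $\sigma_iv_j=v_j\sigma_i$ all hold by relation~(9), and after rearranging one obtains $\sigma_jv_j\sigma_iv_i=\mu_j\mu_i$. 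The case $\mu_i\gamma_j$ is entirely analogous, replacing $\sigma_j$ by $\tau_j$ throughout and using that $\tau_j$ commutes with $\sigma_i$ and $v_i$ when $|i-j|>1$.

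The only mild subtlety — and the step I would flag as the main (though still routine) obstacle — is bookkeeping: one must confirm that at each swap the two generators genuinely have indices differing by more than one, so that relation~(9) applies. Because $|i-j|>1$ is assumed and every generator in $\alpha_i$ carries index $i$ while every generator in $\beta_j$ carries index $j$, this condition holds for every pairwise swap, and no genuine braid relation (R3-type) is ever invoked. I would present one or two cases explicitly in the underlining-and-annotation format used in Lemmas~\ref{lemma:detour} and~\ref{lemma:R3}, citing relation~(9) at each step, and then remark that the remaining cases follow identically by substituting $\tau$ for $\sigma$ and by the symmetry of the commutation relation. For completeness I note that the relation involving $\mu_i^{-1}$ need not be listed separately here, since $\mu_i^{-1}=v_i\sigma_i^{-1}$ is again a word in index-$i$ generators and commutes with any index-$j$ word by the same argument.
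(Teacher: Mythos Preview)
Your proposal is correct and follows essentially the same approach as the paper: the paper's proof simply remarks that these are the commuting relations for elementary fusing strings and that they follow immediately from the commuting relations (relation~(9)) in Definition~\ref{def:vsbn}. Your write-up is more detailed than the paper's one-line justification, but the underlying idea---expand the fusing strings into words in index-$i$ and index-$j$ generators and commute them past each other via relation~(9)---is exactly the intended argument.
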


\begin{proof} These relations are the commuting relations for elementary fusing strings. Naturally, they follow from the commuting relations for $VSB_n$ (see the last set of relations of Definition \ref{def:vsbn}). 
\end{proof}

We now state one more lemma which preserves the singular twist relation of $VSB_n$ among fusing strings.  

%Lemma 10
\begin{lemma}\label{lemma:twist}
The following relations hold in $VSB_n$, for all $1\leq i \leq n-1$:
\begin{eqnarray*} 
\mu_{i}v_i\gamma_i=\gamma_iv_i\mu_{i}
\end{eqnarray*}
\[ \raisebox{-33pt}{\includegraphics[height=1in]{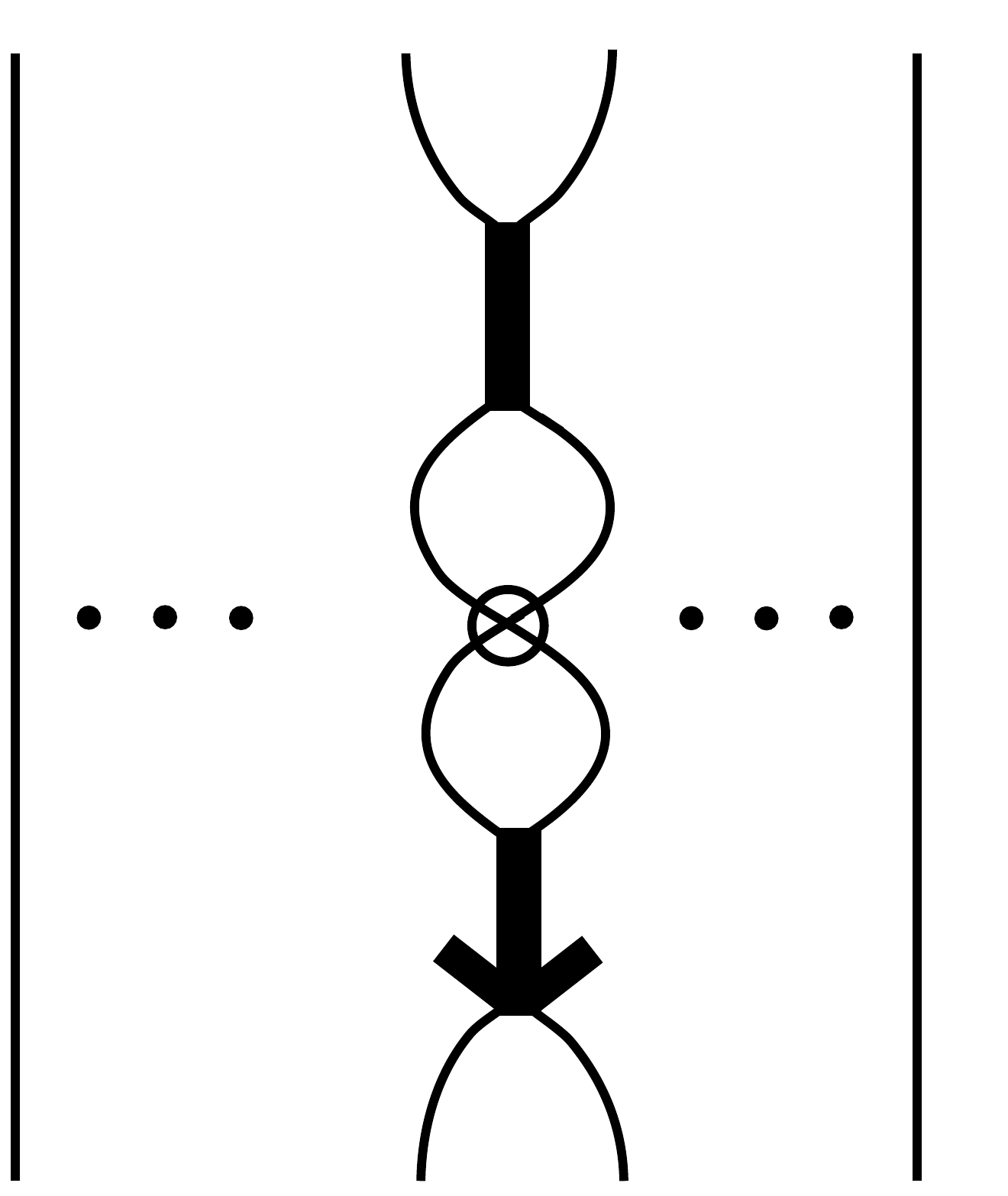}}\hspace{.2cm} \longleftrightarrow\hspace{.3cm}  \raisebox{-33pt}{\includegraphics[height=1in]{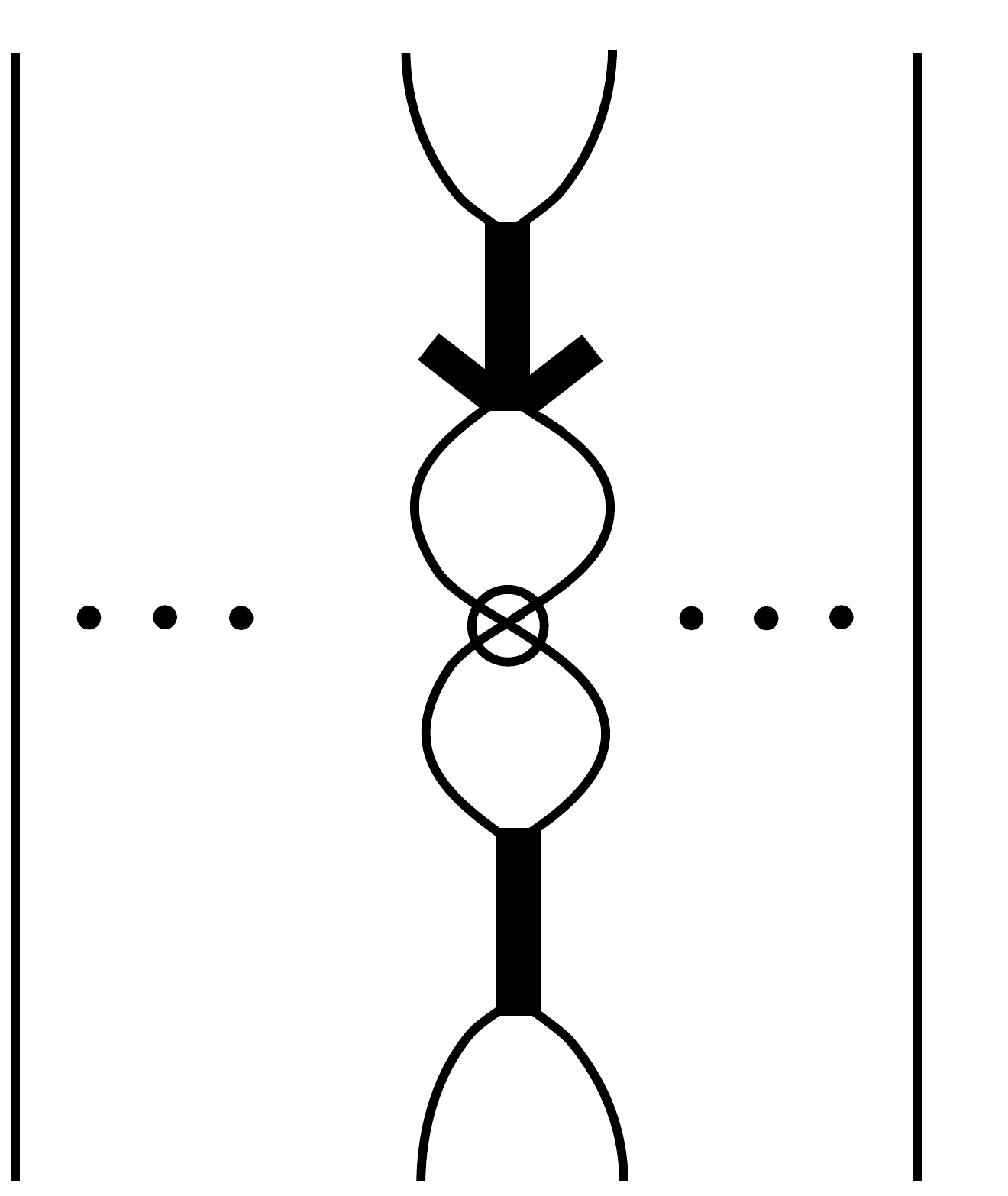}}\]
\end{lemma}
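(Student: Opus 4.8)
The plan is to translate both sides of the asserted identity back into the standard generators $\sigma_i, \tau_i, v_i$ using the definitions $\mu_i = \sigma_i v_i$ and $\gamma_i = \tau_i v_i$ from Definition~\ref{def:cstrings}, and then show that the resulting words coincide by a single application of the singular twist relation $RS1$. This mirrors the strategy already used in the proofs of Lemmas~\ref{lemma:detour} and~\ref{lemma:R3}, where each fusing-string relation is reduced to the corresponding $VSB_n$ relation of Definition~\ref{def:vsbn}.

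First I would substitute the definitions into the left-hand side to get $\mu_i v_i \gamma_i = \sigma_i v_i v_i \tau_i v_i$, and collapse the adjacent pair $v_i v_i$ by the identity relation $V2$ (that is, $v_i^2 = 1_n$), leaving $\sigma_i \tau_i v_i$. Performing the same substitution on the right-hand side yields $\gamma_i v_i \mu_i = \tau_i v_i v_i \sigma_i v_i$, and again applying $V2$ to the middle $v_i v_i$ gives $\tau_i \sigma_i v_i$. At this stage the two reduced words differ only in the order of the factors $\sigma_i$ and $\tau_i$, so the equality $\sigma_i \tau_i v_i = \tau_i \sigma_i v_i$ follows at once from $RS1$, namely $\sigma_i \tau_i = \tau_i \sigma_i$. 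The computation can be displayed in the paper's two-column format, underlining the portion of each word being modified and labeling each line with $V2$ or $RS1$ as appropriate.

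I do not expect any genuine obstacle here: the only substantive ingredient is the singular twist relation $RS1$, and the remaining manipulations are merely cancellations of virtual crossings via $V2$. The one point worth stating explicitly is that this lemma is precisely the fusing-string avatar of $RS1$, which justifies the remark preceding the statement that it ``preserves the singular twist relation of $VSB_n$ among fusing strings''; making that correspondence transparent is the real content of the proof rather than any computational difficulty.
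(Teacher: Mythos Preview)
Your proposal is correct and follows essentially the same argument as the paper: substitute the definitions of $\mu_i$ and $\gamma_i$, cancel the adjacent $v_i v_i$ via $V2$, and invoke $RS1$. The only cosmetic difference is that the paper transforms the left-hand side into the right-hand side in a single chain, whereas you reduce both sides separately and compare; the content is identical.
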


\begin{proof} 
Let $1\leq i \leq n-1$. Using the defining relations for $\mu_i$ and $\gamma_i$, the identity relation $v_i^2=1_n$, and the singular twist relation $\sigma_i\tau_i=\tau_i\sigma_i$, which we know hold in $VSB_n$, we obtain:
\begin{align*}
\mu_iv_i\gamma_i&\,\,{=}\,\,(\sigma_i\underline{v_i)v_i}(\tau_iv_i)&{V2}\\
&\,\,{=}\,\,\underline{\sigma_i\tau_i}v_i&{RS1}\\
&\,\,{=}\,\,\tau_i\underline{1_n}\sigma_iv_i&{V2}\\
&\,\,{=}\,\,(\tau_iv_i)v_i(\sigma_iv_i)&\\
&\,\,{=}\,\,\gamma_iv_i\mu_i.
\end{align*}
Therefore, the relations hold for all $1\leq i \leq n-1$.
\end{proof}

\subsection{A presentation for $VSB_n$ using fusing strings} \label{sec:PresFusing}

We define a monoid with the elementary fusing strings as generators. The relations proved in the preceding lemmas become defining relations for this monoid. In this monoid, $\mu_i$ and $\mu_i^{-1}$  will be inverses of each other. This should be clear from their definitions. In addition, we have the elements $v_i$ of $VSB_n$ as generators for this monoid. Consequently, we must adopt the defining relations of $VSB_n$ involving only the virtual generators. 

%Definition 11
\begin{definition} Let $M_n$ be the monoid with the following presentation using generators $\{\mu_i^{\pm1},\gamma_i,v_i\big{|} 1\leq i \leq {n-1}\}$ and relations:
     \begin{align}
     v_i^2=1_n \,\, &\text{and}\,\, \, \mu_i\mu_i^{-1}=1_n=\mu_i^{-1}\mu_i \label{eq:mnid}\\
       v_i v_{j} v_i&= v_{j} v_i v_{j},  \,\, |i-j|=1 \label{eq:mnv3}\\
       v_i \mu_{j} v_i&= v_{j} \mu_i v_{j},  \,\, |i-j|=1\label{eq:mnvr3}\\
 v_i \gamma_{j} v_i&= v_{j} \gamma_i v_{j}, \,\, |i-j|=1\label{eq:mnvs3}\\
 \mu_{j}(v_{j}\mu_{i}v_{j})\mu_{i}&=\mu_{i}(v_{j}\mu_{i}v_{j})\mu_{j}, \,\, |i-j|=1\label{eq:mnr3}\\
       \mu_{j}(v_{j}\mu_{i}v_{j})\gamma_{i}&=\gamma_{i}(v_{j}\mu_{i}v_{j})\mu_{j}, \,\,  |i-j|=1\label{eq:mnrs31}\\
         \mu_{i}v_i\gamma_{i}&=\gamma_{i}v_i\mu_{i}\label{eq:mnr1}\\
    \alpha_i\beta_j&= \beta_j\alpha_i,\,\,\,\,|i-j|>1,\,\,\,\,  \forall \alpha_i,\beta_i\in \{\mu_i,\gamma_i,v_i\}\label{eq:mnfc}
\end{align} 
\label{def:mn}
\end{definition}

%Theorem 12
\begin{theorem}\label{theorem:iso}
$M_n$ is isomorphic to $VSB_n$.
\end{theorem}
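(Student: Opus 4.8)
The plan is to construct mutually inverse monoid homomorphisms between $M_n$ and $VSB_n$ and verify they are well defined. The key observation is that we already have maps in both directions at the level of generators: the defining relations in Definition~\ref{def:cstrings} express $\mu_i, \mu_i^{-1}, \gamma_i$ in terms of the standard generators of $VSB_n$, and conversely the formulas $\sigma_i = \mu_i v_i$, $\sigma_i^{-1} = v_i \mu_i^{-1}$, $\tau_i = \gamma_i v_i$ (derived in the text) express the standard generators in terms of fusing strings. So first I would define $\Phi \co M_n \to VSB_n$ on generators by $\Phi(\mu_i) = \sigma_i v_i$, $\Phi(\mu_i^{-1}) = v_i \sigma_i^{-1}$, $\Phi(\gamma_i) = \tau_i v_i$, and $\Phi(v_i) = v_i$, and define $\Psi \co VSB_n \to M_n$ on generators by $\Psi(\sigma_i) = \mu_i v_i$, $\Psi(\sigma_i^{-1}) = v_i \mu_i^{-1}$, $\Psi(\tau_i) = \gamma_i v_i$, and $\Psi(v_i) = v_i$.

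The main work is to check that each of these assignments respects the defining relations of the source monoid, so that the maps descend to genuine monoid homomorphisms. For $\Phi$, this amounts to verifying that the images under $\Phi$ of relations (\ref{eq:mnid})--(\ref{eq:mnfc}) of Definition~\ref{def:mn} hold in $VSB_n$; but this is precisely the content of Lemmas~\ref{lemma:detour}, \ref{lemma:R3}, \ref{lemma:fc}, and \ref{lemma:twist}, which were proved exactly by rewriting the fusing-string relations back into the $\sigma_i, \tau_i, v_i$ generators and applying the relations of Definition~\ref{def:vsbn}. The relations (\ref{eq:mnid}) and (\ref{eq:mnv3}) hold trivially since $v_i$ maps to $v_i$ and $\mu_i \mu_i^{-1} = \sigma_i v_i v_i \sigma_i^{-1} = 1_n$ by $V2$ and $R2$. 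So $\Phi$ is well defined essentially by invoking the preceding lemmas. For $\Psi$, I would dually check that the images of the defining relations of $VSB_n$ (Definition~\ref{def:vsbn}) hold in $M_n$; this is the more laborious direction, since each $R3$-type relation of $VSB_n$ must be rederived inside $M_n$ using only relations (\ref{eq:mnid})--(\ref{eq:mnfc}). I expect this verification to be the main obstacle, as it requires translating each standard relation into fusing-string words and confirming it is a consequence of the $M_n$ relations, rather than simply reusing an already-proved lemma.

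Once both maps are shown to be homomorphisms, the final step is to confirm they are mutually inverse. It suffices to check this on generators, since the maps are homomorphisms. For instance, $\Psi(\Phi(\mu_i)) = \Psi(\sigma_i v_i) = (\mu_i v_i) v_i = \mu_i v_i^2 = \mu_i$ using relation (\ref{eq:mnid}), and similarly $\Psi(\Phi(\gamma_i)) = \gamma_i$, $\Psi(\Phi(\mu_i^{-1})) = \mu_i^{-1}$, and $\Psi(\Phi(v_i)) = v_i$; in the other direction $\Phi(\Psi(\sigma_i)) = \Phi(\mu_i v_i) = (\sigma_i v_i) v_i = \sigma_i$ by $V2$, and likewise for $\sigma_i^{-1}$, $\tau_i$, and $v_i$. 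Hence $\Psi \circ \Phi = \id_{M_n}$ and $\Phi \circ \Psi = \id_{VSB_n}$, so $\Phi$ is an isomorphism. The crux of the argument is thus entirely in establishing well-definedness, and I would organize the proof so that the $\Phi$ direction cites the four lemmas directly while the $\Psi$ direction carries the bulk of the explicit rewriting.
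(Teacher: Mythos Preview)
Your proposal is correct and matches the paper's proof essentially step for step: the paper defines the same two maps (calling your $\Psi$ by $F$ and your $\Phi$ by $G$), verifies well-definedness of $F\co VSB_n\to M_n$ by explicitly rewriting each defining relation of $VSB_n$ inside $M_n$, dispatches well-definedness of $G\co M_n\to VSB_n$ by citing Lemmas~\ref{lemma:detour}--\ref{lemma:twist}, and then observes the composites are the identity on generators. Your identification of the $\Psi$ direction as the one carrying the explicit work is exactly how the paper organizes it.
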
   

\begin{proof}
  Define the map $F\co VSB_n\longrightarrow M_n$ by 
 \[F(v_i)=v_i,\,\,\,\, F(\sigma_i) = \mu_{i}v_i, \,\,\,\, F(\sigma_i^{-1})=v_i\mu_i^{-1},\,\,\,\, \text{and}\,\, \,\, F(\tau_i)=\gamma_{i}v_i\]
 and extend it to all virtual singular braids, written in terms of the generators of $VSB_n$, so that $F$ is a homomorphism.
We need to show that $F$ is well defined. That is, we need to show that $F$ preserves the defining relations for $VSB_n$. Since $F$ fixes the virtual generators, the relations among virtual generators in $VSB_n$ are preserved. We will now show that the remaining relations in $VSB_n$ are also preserved. 

 We begin by showing that the identity relation, $R2$, involving real crossings is preserved. We need to show that $F(\sigma_i\sigma_i^{-1})=F(1_n)=F(\sigma_i^{-1}\sigma_i)$. Since $F$ is a homomorphism, we must have $F(1_n)=1_n$. So it suffices to prove that both $F(\sigma_i\sigma_i^{-1}) =  1_n$ and $F(\sigma_i^{-1}\sigma_i)=1_n.$ We have: 
  \[F(\sigma_i\sigma_i^{-1}) = \mu_iv_iv_i\mu_i^{-1}  \stackrel{\eqref{eq:mnid} }{=} 1_n \,\, \, \text{and} \,\, \, F(\sigma_i^{-1}\sigma_i)=v_i\mu_i^{-1}\mu_iv_i  \stackrel{\eqref{eq:mnid} }{=}  1_n.\] 
Thus the relation $R2$ is preserved. Next we show that the singular twist relation $RS1$ holds:
 \[F(\sigma_i\tau_i)=\underline{(\mu_{i}v_i)(\gamma_{i}}v_i) \stackrel{\eqref{eq:mnr1}}{=}(\gamma_iv_i)(\mu_iv_i)=F(\tau_i\sigma_i).\]

As shown below, $F$ preserves the relation $RS3$:

 \begin{eqnarray*}
 F(\sigma_i\sigma_{j}\tau_i)& = & (\mu_{i}\underline{1_n}v_i)(\mu_{j}v_{j})(\gamma_{i}\underline{1_n}v_i)\\
 &\stackrel{\eqref{eq:mnid}}{=}& \mu_{i}v_{j}\underline{v_{j}v_i\mu_{j}}\,\,\underline{v_{j}\gamma_{i}v_{j}}v_{j}v_i\\
 &\stackrel{\eqref{eq:mnvr3}, \eqref{eq:mnvs3}}{=}& \mu_{i}v_{j}\mu_{i}v_{j}\underline{v_{i}v_{i}}\gamma_{j}v_{i}v_{j}v_i\\
  &\stackrel{\eqref{eq:mnid}}{=}&\underline{\mu_{i}(v_{j}\mu_{i}v_{j})\gamma_{j}}\,\,\underline{v_{i}v_{j}v_i}  \\
& \stackrel{\eqref{eq:mnrs31}, \eqref{eq:mnv3}}{=}& \gamma_{j}(v_{j}\mu_{i}\underline{v_{j})\mu_{i}v_{j}}v_{i}v_{j} \\
& \stackrel{\eqref{eq:mnvr3} }{=}& \gamma_{j}v_{j}\mu_{i}v_{i}\mu_{j}v_{i}v_{i}v_{j}.
  \end{eqnarray*}
By applying the identity relation~\eqref{eq:mnid}, we have the desired word as a result of the defining relations for $\mu_i$ and $\gamma_i$:
  \begin{eqnarray*}
    F(\sigma_i\sigma_{j}\tau_i) &= &\gamma_{j}v_{j}\mu_{i}v_{i}\mu_{j}\underline{v_{i}v_{i}}v_{j}\\
    &=&(\gamma_{j}v_{j})(\mu_{i}v_{i})(\mu_{j}v_{j})\\
 &=&F(\tau_{j}\sigma_i\sigma_{j}).
  \end{eqnarray*}
  
  Therefore, $F$ preserves the relation $RS3$. The proof that relation $R3$ is also preserved by $F$ is similar; one replaces $\tau_i$ by $\sigma_ i$ and makes use of relation~\eqref{eq:mnr3} instead of relation~\eqref{eq:mnrs31}.
  
Next we show that the relation $VS3$ is preserved by $F$:
 \begin{eqnarray*}
 F(v_i\tau_{j}v_i)&=&v_i(\gamma_{j}\underline{1_n}v_{j})v_i\\
 &\stackrel{ \eqref{eq:mnid}}{=}& \underline{v_i\gamma_{j}v_i}\,\,\underline{v_iv_{j}v_i} \\
 &\stackrel{\eqref{eq:mnvs3}, \eqref{eq:mnv3}}{=} &v_j\gamma_{i}\underline{v_jv_j}v_{i}v_j\\
  &\stackrel{ \eqref{eq:mnid}}{=}&v_j(\gamma_{i}v_{i})v_j\\
 &=&F(v_{j}\tau_iv_{j}).
  \end{eqnarray*}

The relation $VR3$ can be verified in the same way by replacing $\tau_i$ and $\gamma_i$ with $\sigma_i$ and $\mu_i$, respectively. Instead of using the relation~\eqref{eq:mnvs3}, one uses relation~\eqref{eq:mnvr3}.

It is easy to see that $F$ preserves the commuting relations for $VSB_n$; this follows from relations \eqref{eq:mnfc}.

Therefore, $F$ is a well-defined homomorphism from $VSB_n$ to $M_n$. To show that $F$ is an isomorphism, we define a homomorphism from $M_n$ to $VSB_n$ that is the inverse map of $F$.

We define $G \co M_n\longrightarrow VSB_n$ by 
\[G(v_i)=v_i, \, G(\mu_{i})=\sigma_iv_i,\, G(\mu^{-1}_{i})=v_i\sigma^{-1}_i, \, \text{and }  G(\gamma_{i})=\tau_iv_i,\]
and extend it to all members in $M_n$ so that $G$ is a homomorphism.

We remark that $G$ is defined such that it mimics the defining relations for the elementary fusing strings $\mu_i^{\pm1}$ and $\gamma_i$ (see Definition~\ref{def:cstrings}). By Lemmas~\ref{lemma:detour} and~\ref{lemma:R3}--\ref{lemma:twist}, together with the definition for the monoid $M_n$, it follows that $G$ preserves the relations for $B_n$.

Moreover, it is easy to see that $F\circ G$ and $G\circ F$ are the identity maps on $M_n$ and $VSB_n$, respectively. Therefore, $VSB_n$ and $M_n$ are isomorphic monoids. This completes the proof.
\end{proof}
%Remark 13
\begin{remark}
Since $VSB_n$ and $M_n$ are isomorphic monoids, and the generators for $M_n$ are elements in $VSB_n$, our defining presentation for $M_n$ can be used as an alternative presentation for $VSB_n$ using elementary fusing strings.

Similar to Theorem~\ref{theorem:red} for $VSB_n$, there is a reduced presentation for the monoid $M_n$. We arrive at this presentation by using  the detour move to define the generators $\mu_i,\mu_i^{-1}$, and $\gamma_i$ in terms of $\mu_1,\mu_1^{-1}$, and $\gamma_1$, respectively, as follows:
\[\mu_{i+1}^{\pm{1}}: = (v_i\dots v_1)(v_{i+1}\dots v_3v_2)\mu_1^{\pm{1}}(v_2v_3\dots v_{i+1})(v_1v_2\dots v_i)\]
\[\gamma_{i+1}: = (v_i\dots v_1)(v_{i+1}\dots v_3v_2)\gamma_1(v_2v_3\dots v_{i+1})(v_1v_2\dots v_i)\]

This allows us to rewrite all of the relations in Definition~\ref{def:mn} using only the generators $\mu_1^{\pm{1}}$ and $\gamma_1$ along with the virtual generators $v_i$, for all $1\leq i \leq n-1$. Hence, we can describe $M_n$ using fewer generators. Consequently, $VSB_n$ has a reduced presentation using elementary fusing strings $\mu_1^{\pm{1}},\gamma_1$ together with the virtual generators $v_i$, for all $1\leq i \leq n-1$.
\end{remark}

%Proposition 14
\begin{proposition}\label{prop:redvssn}
$VSB_n$ has the following reduced presentation using $\{v_1,v_2,\dots , v_{n-1}\}$ and the elementary fusing strings $\mu_1^{\pm 1}$ and $\gamma_1$ as generators and subject to the following relations:
\begin{align}
     v_i^2=1_n \,\, &\text{and}\,\, \,  \mu_{1}^{-1}\mu_{1}=1_n = \mu^{-1}_{1}\mu_{1}\\
          v_iv_jv_i&=v_jv_iv_j, \,\, |i-j|=1\\
       (v_1v_2\mu_{1}v_2v_1)(v_2\mu_{1}v_2)\mu_{1}&=\mu_{1}(v_2\mu_{1}v_2)(v_1v_2\mu_{1}v_2v_1)\label{eq:rcsR3} \\
      (v_1v_2\mu_{1}v_2v_1)(v_2\mu_{1}v_2)\gamma_{1}&=\gamma_{1}(v_2\mu_{1}v_2)(v_1v_2\mu_{1}v_2v_1) \\
           \mu_{1}v_1\gamma_{1}&=\gamma_{1}v_1\mu_{1}\\
       v_iv_j&=v_jv_i, \,\, |i-j|>1 \label{eq:fcomv}\\
   \mu_{1} v_i  = v_i \mu_1\,\,\, &\text{and}\,\, \,\gamma_1v_i=v_i\gamma_1, \,\, i \geq 3\\
        \gamma_{1} (v_2 v_1v_3 v_2 \gamma_{1} v_2 v_3 v_1 v_2) &=(v_2 v_1v_3 v_2 \gamma_{1} v_2 v_3 v_1 v_2)\gamma_{1}   \\
        \gamma_{1} (v_2 v_1v_3 v_2 \mu_{1} v_2 v_3 v_1 v_2)& =(v_2 v_1v_3 v_2 \mu_{1} v_2 v_3 v_1 v_2)\gamma_{1}  \\
        \mu_{1} (v_2 v_1v_3 v_2 \mu_{1} v_2 v_3 v_1 v_2) &=(v_2 v_1v_3 v_2 \mu_{1} v_2 v_3 v_1 v_2)\mu_{1}   \label{eq:fcom_mu}
 \end{align} 
\end{proposition}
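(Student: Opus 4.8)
The plan is to deduce the reduced presentation directly from the presentation of $M_n$ given in Definition~\ref{def:mn}, which by Theorem~\ref{theorem:iso} is isomorphic to $VSB_n$, by performing Tietze transformations that eliminate the higher-index fusing strings. First I would adjoin to the presentation of $M_n$ the detour definitions
\[
\begin{aligned}
\mu_{i+1}^{\pm 1} &:= (v_i\cdots v_1)(v_{i+1}\cdots v_2)\,\mu_1^{\pm 1}\,(v_2\cdots v_{i+1})(v_1\cdots v_i),\\
\gamma_{i+1} &:= (v_i\cdots v_1)(v_{i+1}\cdots v_2)\,\gamma_1\,(v_2\cdots v_{i+1})(v_1\cdots v_i),
\end{aligned}
\]
which present each $\mu_i^{\pm1},\gamma_i$ with $i\geq 2$ as an explicit word in $\mu_1^{\pm1},\gamma_1$ and the $v_k$'s. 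Writing $W_i=(v_i\cdots v_1)(v_{i+1}\cdots v_2)$ one checks that $W_i^{-1}=(v_2\cdots v_{i+1})(v_1\cdots v_i)$, so these definitions read $\mu_{i+1}=W_i\mu_1 W_i^{-1}$ and $\gamma_{i+1}=W_i\gamma_1 W_i^{-1}$. The generators $\mu_i^{\pm1},\gamma_i$ with $2\leq i\leq n-1$ may then be struck from the generating set, leaving exactly $\{v_1,\dots,v_{n-1},\mu_1^{\pm1},\gamma_1\}$.

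Second, I would substitute these definitions into each defining relation of $M_n$ and identify the surviving independent relations. The pure-$v$ relations (first part of \eqref{eq:mnid}, the braid relation \eqref{eq:mnv3}, and the $v$-part of \eqref{eq:mnfc}) carry over verbatim as the first two displayed relations of the proposition and as \eqref{eq:fcomv}. The identity $\mu_1\mu_1^{-1}=\mu_1^{-1}\mu_1=1_n$ is the $i=1$ case of \eqref{eq:mnid}, and the higher cases $\mu_i\mu_i^{-1}=1_n$ follow from it since $\mu_i=W_{i-1}\mu_1 W_{i-1}^{-1}$. The base cases of the remaining families give precisely the proposition's relations: taking $(i,j)=(1,2)$ in \eqref{eq:mnr3} and \eqref{eq:mnrs31} and substituting $\mu_2=v_1v_2\mu_1v_2v_1$ yields \eqref{eq:rcsR3} and the line beneath it; the $i=1$ case of \eqref{eq:mnr1} is $\mu_1v_1\gamma_1=\gamma_1v_1\mu_1$; the pairs $(1,i)$ with $i\geq 3$ in \eqref{eq:mnfc} give $\mu_1v_i=v_i\mu_1$ and $\gamma_1v_i=v_i\gamma_1$; and the pair $(1,3)$ in \eqref{eq:mnfc}, after substituting $\mu_3=v_2v_1v_3v_2\mu_1v_2v_3v_1v_2$ and the analogous $\gamma_3$, gives the three remaining relations, including \eqref{eq:fcom_mu}.

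The substantive direction, and the step I expect to be the main obstacle, is showing that the relations just listed are in fact \emph{sufficient}: every relation of $M_n$ at a general index must be recoverable as a consequence of the reduced relations together with the detour definitions, so that no information is lost when the higher generators are eliminated. The mechanism is index-shifting by conjugation. First the general-index detour relations $v_i\mu_{i+1}v_i=v_{i+1}\mu_i v_{i+1}$ and $v_i\gamma_{i+1}v_i=v_{i+1}\gamma_i v_{i+1}$ (the form of Lemma~\ref{lemma:detour}) must be re-derived purely from the retained $v$-relations and the definitions; afterwards each general adjacent-index instance of \eqref{eq:mnr3}, \eqref{eq:mnrs31}, and \eqref{eq:mnr1}, and each far pair in \eqref{eq:mnfc}, is obtained by conjugating the corresponding base instance by the appropriate word $W_k$ in the $v$'s and simplifying via the braid relations \eqref{eq:mnv3} and the far-commutation relations. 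The bookkeeping here---verifying that conjugation by $W_k$ carries $\mu_1\mapsto\mu_{k+1}$ and $\gamma_1\mapsto\gamma_{k+1}$ compatibly across every factor of a relation, so that the whole relation shifts as a single unit---is where the work concentrates. Once it is in place, the two directions together show that the listed generators and relations constitute a presentation of $M_n\cong VSB_n$.
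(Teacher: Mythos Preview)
Your proposal is correct and follows essentially the same approach as the paper: start from the $M_n$ presentation (via Theorem~\ref{theorem:iso}), use the detour definitions to eliminate the higher-index fusing strings, keep the base instances of each relation family at the smallest indices, and argue that the remaining instances are recovered by conjugating with words in the $v_k$'s (the detour move). Your treatment is in fact more explicit than the paper's---you frame the argument as Tietze transformations and isolate the nontrivial direction (sufficiency via index-shifting)---whereas the paper's proof is a brief sketch that asserts the same facts without spelling out the conjugation bookkeeping.
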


\begin{proof}
We do not need the generators $\mu_i,\mu_i^{-1},$ and $\gamma_i$ for $2 \leq i \leq n-1$ since we have defined them in terms of the other generators. As for the relations, we see that they are the same as the relations in Definition \ref{def:mn}, where we fix $i=1$ whenever the relations require the generators $\mu_i,\mu_i^{-1},$ or $\gamma_i$. If a relation uses both $i$ and $j$, we assume that the relation occurs at the left most portion of the braid, so we use the smallest possible subscript for $j$. Hence, for the relations that require $|i-j|=1$, we fix $j=2$ but describe the corresponding generator in terms of a generator with a subscript of 1. For example, in the identity \eqref{eq:rcsR3}, we use $v_1v_2\mu_{1}v_2v_1$ in place of $\mu_2$. Likewise, for the relations that require $|i-j|>1$, we fix $j=3$ but describe the corresponding generator in terms of a generator with a subscript of 1. Any relation that occurs elsewhere in the braid follows from these relations by the detour move. We do not include the identities \eqref{eq:mnvr3} and \eqref{eq:mnvs3} in this reduced presentation, since they were used to obtain the defining relations for $\mu_{i+1}^{\pm{1}}$ and $\gamma_{i+1}$. However, the remaining relations of Definition \ref{def:mn} are included in this reduced presentation, where the subscripts have been changed as described above. Note that the commuting relations~\eqref{eq:mnfc} are now represented by the relations~\eqref{eq:fcomv}--\eqref{eq:fcom_mu}.
Therefore, the statement holds.
\end{proof}

%%%%%%%%%%%%%%%%%%%%%%%%%%%%%%%%%%%%%%%%%%%%%%%%%%%%%%%%%%%%%%%%%%%%%%%%%

\subsection{A connection with Yang-Baxter equation} \label{sec:YBEincentive}

It is interesting and important to remark that the fusing strings are intimately related to the algebraic Yang-Baxter equation, as we now explain (see also~\cite{KL2}).

Let $V$ be a vector space over a field $\mathbb{K}$ and $R \co V \otimes V \to V \otimes V$ a linear map.
Let $\mathcal{T} \co V \otimes V \to V \otimes V$ be the \textit{twist map} given by:
 \[ \mathcal{T}(v \otimes w) = w \otimes v, \hspace{1cm} \forall v, w \in V \] 

Define the operators $R_{12}, R_{23}$ and $R_{13}$ as below,
  \begin{eqnarray*}
R_{12}: &=& R \otimes \id_V  \\ 
 R_{23}: &=& \id_V \otimes R  \\ 
 R_{13}: &=& (\id_V \otimes \mathcal{T})(R \otimes \id_V)(\id_V \otimes \mathcal{T}) 
 \end{eqnarray*}
and recall that $R$ is said to satisfy the \textit{Yang-Baxter equation} (YBE) if
 \begin{eqnarray} \label{eq:YBE0}
  R_{12}R_{13}R_{23} = R_{23} R_{13}R_{12} 
 \end{eqnarray}
 
 Using the replacements
 \[ R_{12} \rightsquigarrow \mu_1, \, R_{23} \rightsquigarrow \mu_2, \, R_{13} \rightsquigarrow v_2\mu_1v_2, \]
the YBE~\eqref{eq:YBE0} is equivalent to the equation
\begin{eqnarray} \label{eq:YBE1}
 \mu_1 (v_2\mu_1v_2)\mu_2 = \mu_2 (v_2\mu_1v_2)\mu_1, 
 \end{eqnarray}
which is the first relation in Lemma~\ref{lemma:R3} (for the case $i = 1$ and $j=2$).    
This is not unexpected, since it is known that the operator $R$ satisfies the YBE if and only if $B : = \mathcal{T} R$ satisfies the \textit{braid equation}:
\[ B_{12}B_{23}B_{12} = B_{23}B_{12}B_{23}. \]
To this end, it is clear that by making use of the following obvious associations:
\[ B_{12} \rightsquigarrow \sigma_1, \, B_{23} \rightsquigarrow \sigma_2,  \]
the braid equation is equivalent to the well-known relation 
\[\sigma_1 \sigma_2 \sigma_1 =\sigma_2 \sigma_1 \sigma_2,\]
which holds in $VSB_n$ (and in the classical braid group, for that matter).

Relation~\eqref{eq:YBE1} can be thought of as the algebraic Yang-Baxter equation, and this is our main incentive for working with presentations for $VSB_n$ using fusing strings as generators. We will return to Yang-Baxter equation in Section~\ref{sec:repres}, where we  construct a representation of $VSB_n$ into linear operators over $V^{\otimes n}$.

%%%%%%%%%%%%%%%%%%%%%%%%%%%%%%%%%%%%%%%%%%%%%%%%%%%%%%%%%%%%%%%%%%%%%%%%%
\section{A presentation for the virtual singular pure braid monoid} \label{sec:pure braids}

In this section we find a presentation for the virtual singular pure braid monoid, $VSP_n$, defined in Section~\ref{sec:introVSBn}. To accomplish  this, we introduce the \textit{generalized fusing strings} $\mu_{ij}, \mu_{ji}, \gamma_{ij}$ and $\gamma_{ji}$, where $1 \leq i < j \leq n$. These are elements in $VSP_n$ defined as follows:
\[\mu_{ij}: = (v_{j-1}v_{j-2} \dots v_{i+1}) \mu_i (v_{i+1} \dots v_{j-2} v_{j-1}) \]
\[\gamma_{ij}: = (v_{j-1}v_{j-2} \dots v_{i+1}) \gamma_i (v_{i+1} \dots v_{j-2} v_{j-1}) \]
\[\mu_{ji}: = (v_{j-1}v_{j-2} \dots v_{i+1}) v_i \mu_i v_i (v_{i+1} \dots v_{j-2} v_{j-1}) \]
\[\gamma_{ji}: = (v_{j-1}v_{j-2} \dots v_{i+1}) v_i \gamma_i v_i (v_{i+1} \dots v_{j-2} v_{j-1}) \]

We remark that $\mu_{i, i+1} = \mu_i$ and $\gamma_{i, i+1} = \gamma_i$. Moreover, $\mu_{i+1, i} = v_i \mu_i v_i = v_i \sigma_i$ and $\gamma_{i+1, i} = v_i \gamma_i v_i = v_i \tau_i$.

The group $S_n$ acts on $VSB_n$ by conjugation. The next statement studies this action on the set of generalized fusing strings. Relations 1 below are an immediate consequence of the commuting relations in $VSB_n$ (last set of relations in Definition~\ref{def:vsbn}), while relations 2, 3 and 4 follow from the defining relations for the generalized fusing strings.

%Lemma 15
\begin{lemma} \label{lemma:action}
The following relations hold in $VSB_n$:
\begin{enumerate}
\item $v_i \, \mu_{kl} \, v_i = \mu_{kl}, \,\,\,\, v_i \, \gamma_{kl} \, v_i = \gamma_{kl}$ for all $|k - i|>1, \, |l - i|>1$
\item $v_{i-1} \, \mu_{i, i+1} \, v_{i-1} = \mu_{i-1, i+1}, \,\,\,\, v_{i-1}  \, \gamma_{i, i+1} \, v_{i-1} = \gamma_{i-1, i+1}$\\
         $v_{i-1} \, \mu_{i+1, i} \,v_{i-1} = \mu_{i+1, i-1}, \,\,\,\, v_{i-1} \, \gamma_{i+1, i} \, v_{i-1} = \gamma_{i+1, i-1}$
\item $v_i \, \mu_{i, i+1} \, v_i = \mu_{i+1, i}, \,\,\, v_i \, \gamma_{i, i+1}\,  v_i = \gamma_{i+1, i}$\\
         $v_i \, \mu_{i+1, i} \, v_i = \mu_{i, i+1}, \,\,\, v_i \, \gamma_{i+1, i} \, v_i = \gamma_{i, i+1}$
\item $v_{i+1} \, \mu_{i, i+1} \, v_{i+1} = \mu_{i, i+2}, \,\,\,\, v_{i+1}\, \gamma_{i, i+1} \, v_{i+1} = \gamma_{i, i+2}$\\
         $v_{i+1} \, \mu_{i+1, i} \, v_{i+1} = \mu_{i+2, i}, \,\,\,\, v_{i+1}\, \gamma_{i+1, i} \, v_{i+1} = \gamma_{i+2, i}$
\end{enumerate}
\end{lemma}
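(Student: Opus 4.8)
The governing principle is that conjugation by $v_i$ realizes the transposition $(i,i+1)$ on the pair of subscripts of a generalized fusing string, so that $v_i\mu_{kl}v_i=\mu_{k'l'}$ with $k',l'$ the images of $k,l$ under $(i,i+1)$. The four parts are exactly the cases where $(i,i+1)$ fixes both subscripts (Part 1), moves one of them down by one (Part 2), swaps the two adjacent subscripts (Part 3), or moves one of them up by one (Part 4). The tools I would use throughout are the defining formulas for $\mu_{kl},\gamma_{kl}$, the relations $v_i^2=1_n$ (V2) and $v_iv_jv_i=v_jv_iv_j$ (V3), the commuting relations of Definition~\ref{def:vsbn}, and the detour relations $v_i\mu_jv_i=v_j\mu_iv_j$ and $v_i\gamma_jv_i=v_j\gamma_iv_j$ of Lemma~\ref{lemma:detour}. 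In every statement the $\gamma$-case is word-for-word the same as the $\mu$-case, so I would prove the $\mu$-relations and then remark that replacing $\mu$ by $\gamma$, and the detour relation for $\mu$ by the one for $\gamma$, yields the $\gamma$-relations.

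Parts 3 and 4 I would dispatch first, as they are immediate from the definitions. Since $\mu_{i,i+1}=\mu_i$ and $\mu_{i+1,i}=v_i\mu_iv_i$, conjugating by $v_i$ and using $v_i^2=1_n$ gives $v_i\mu_{i,i+1}v_i=v_i\mu_iv_i=\mu_{i+1,i}$ and $v_i\mu_{i+1,i}v_i=v_i^2\mu_iv_i^2=\mu_i=\mu_{i,i+1}$, which is Part 3. For Part 4 I would simply unfold the definitions: $\mu_{i,i+2}=v_{i+1}\mu_iv_{i+1}$ and $\mu_{i+2,i}=v_{i+1}v_i\mu_iv_iv_{i+1}$, so that $v_{i+1}\mu_{i,i+1}v_{i+1}=v_{i+1}\mu_iv_{i+1}=\mu_{i,i+2}$ and $v_{i+1}\mu_{i+1,i}v_{i+1}=v_{i+1}(v_i\mu_iv_i)v_{i+1}=\mu_{i+2,i}$. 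No relations beyond the definitions are needed here.

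For Part 2 I would again expand the definitions: $\mu_{i-1,i+1}=v_i\mu_{i-1}v_i$, so the first relation $v_{i-1}\mu_{i,i+1}v_{i-1}=\mu_{i-1,i+1}$ reduces to $v_{i-1}\mu_iv_{i-1}=v_i\mu_{i-1}v_i$, which is exactly the detour relation of Lemma~\ref{lemma:detour} for the pair $\{i-1,i\}$. The second relation is the only genuinely computational line: since $\mu_{i+1,i}=v_i\mu_iv_i$ and $\mu_{i+1,i-1}=v_iv_{i-1}\mu_{i-1}v_{i-1}v_i$, I must verify $v_{i-1}v_i\mu_iv_iv_{i-1}=v_iv_{i-1}\mu_{i-1}v_{i-1}v_i$. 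I would substitute $\mu_{i-1}=v_iv_{i-1}\mu_iv_{i-1}v_i$ (the detour relation solved for $\mu_{i-1}$) into the right-hand side and then collapse the resulting word using $v_iv_{i-1}v_iv_{i-1}=v_{i-1}v_i$ and $v_{i-1}v_iv_{i-1}v_i=v_iv_{i-1}$ (each by V3 followed by V2); this returns the left-hand side.

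The remaining and most delicate case is Part 1, which I expect to be the main obstacle. Writing $\mu_{kl}=w\mu_kw^{-1}$ with $w$ the conjugating word of the definition, I would use the reassociation $v_i\mu_{kl}v_i=w\,(w^{-1}v_iw)\,\mu_k\,(w^{-1}v_iw)\,w^{-1}$. When $i<k-1$ or $i>l+1$ every letter of $w$ commutes with $v_i$, so $w^{-1}v_iw=v_i$ and the claim follows from $|i-k|>1$ together with $v_i^2=1_n$. The subtle case is $k<i<l$ with $|i-k|,|l-i|>1$: here $v_i$ does not commute with $w$ letterwise, but since the virtual generators satisfy precisely the Coxeter relations V2, V3 and the commuting relations, $w^{-1}$ acts as the cycle $(k+1\ \cdots\ l)$, and a short computation with V3 and the commuting relations collapses $w^{-1}v_iw$ to the single generator $v_{i+1}$. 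Because $i\geq k+2$ we have $|(i+1)-k|>1$, so $v_{i+1}$ commutes with $\mu_k$ and the two copies cancel, leaving $w\mu_kw^{-1}=\mu_{kl}$. The crux is exactly this identification of $w^{-1}v_iw$ as a single adjacent transposition disjoint from $\{k,k+1\}$; once it is in hand, all of Part 1 (and its $\gamma$-analogue, via the commuting relations for $\gamma_k$) follows.
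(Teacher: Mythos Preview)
Your argument is correct and follows the same outline the paper sketches in the sentence preceding the lemma: Parts 2--4 from the defining formulas for the generalized fusing strings (together with the detour relation of Lemma~\ref{lemma:detour}), and Part~1 from the commuting relations. You are in fact more careful than the paper on one point: the paper attributes Part~1 entirely to the commuting relations, but as you observe, when $k<i<l$ with $|i-k|,|l-i|>1$ the conjugating word $w$ contains $v_{i-1},v_i,v_{i+1}$, so the claim does not follow from commutation alone and one must also invoke $V3$ to reduce $w^{-1}v_iw$ to a single $v_{i+1}$ disjoint from $\{k,k+1\}$; your computation $v_iw=wv_{i+1}$ handles this cleanly. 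One small omission: your Part~1 discussion literally treats only the case $k<l$ (writing $\mu_{kl}=w\mu_kw^{-1}$); for $k>l$ the inner core is $v_l\mu_lv_l$ rather than $\mu_k$, but the identical argument goes through with $\{k,k+1\}$ replaced by $\{l,l+1\}$, so you should remark on this symmetry.
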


%Corollary 16
\begin{corollary} \label{cor:action}
The group $S_n$ acts by conjugation on the set $\{ \mu_{kl}, \gamma_{kl} \, | \, 1 \leq k \neq l \leq n \}$, which results in permuting the indices of the generalized fusing strings. That is,
\[\alpha \,  \mu_{kl} \, \alpha^{-1} = \mu_{\alpha(k) \alpha(l)}\,\,\, \text{and} \,\,\, \alpha \,  \gamma_{kl} \, \alpha^{-1} = \gamma_{\alpha(k) \alpha(l)}, \]
where $\alpha \in S_n$. This action is transitive.
\end{corollary}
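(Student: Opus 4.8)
The plan is to prove the conjugation formula by reducing to the generating transpositions of $S_n$ and then invoking the relations already established in Lemma~\ref{lemma:action}. Since $S_n$ is generated by the adjacent transpositions $v_i = (i, i+1)$ for $1 \leq i \leq n-1$, and since conjugation $\alpha \mapsto (x \mapsto \alpha x \alpha^{-1})$ is multiplicative in $\alpha$, it suffices to verify the identity when $\alpha = v_i$ is a single transposition. Indeed, if the formula holds for $\alpha$ and $\beta$, then
\[
(\alpha\beta)\, \mu_{kl} \, (\alpha\beta)^{-1} = \alpha(\beta\, \mu_{kl}\, \beta^{-1})\alpha^{-1} = \alpha\, \mu_{\beta(k)\beta(l)}\, \alpha^{-1} = \mu_{\alpha\beta(k)\,\alpha\beta(l)},
\]
and likewise for the $\gamma$'s, so the general case follows by induction on the word length of $\alpha$ in the $v_i$.

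For the base case $\alpha = v_i$, recall that $v_i$ is its own inverse, so $v_i \mu_{kl} v_i^{-1} = v_i \mu_{kl} v_i$, and I must check that this equals $\mu_{v_i(k)\, v_i(l)}$, where $v_i$ acts on an index as the transposition $(i, i+1)$. The verification splits according to how the index pair $(k,l)$ meets $\{i, i+1\}$: either both $k$ and $l$ are disjoint from $\{i,i+1\}$ (in the sense $|k-i|>1$ and $|l-i|>1$), in which case $v_i$ fixes both indices and the relation is exactly part~(1) of Lemma~\ref{lemma:action}; or one of the indices equals $i$ or $i+1$ while the other is adjacent; or $\{k,l\} = \{i, i+1\}$. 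Parts~(2), (3) and~(4) of Lemma~\ref{lemma:action} were engineered to cover precisely these overlapping cases (for example, part~(3) handles $\{k,l\}=\{i,i+1\}$, while parts~(2) and~(4) handle the index $i$ being swapped past a neighbor), so in each case the required identity $v_i \mu_{kl} v_i = \mu_{v_i(k)\, v_i(l)}$ is a direct reading of one of those relations together with the observation that $v_i$ sends the subscript $k$ to $v_i(k)$. The analogous statements for $\gamma_{kl}$ are identical line by line.

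The transitivity claim is then immediate: given any two ordered pairs $(k,l)$ and $(k',l')$ with $k \neq l$ and $k' \neq l'$, there is a permutation $\alpha \in S_n$ with $\alpha(k) = k'$ and $\alpha(l) = l'$ (an ordered pair of distinct indices can always be carried to any other such pair), and the conjugation formula then gives $\alpha\, \mu_{kl}\, \alpha^{-1} = \mu_{k'l'}$.

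\textbf{Main obstacle.} The only real care is bookkeeping: confirming that parts~(1)--(4) of Lemma~\ref{lemma:action} genuinely exhaust every way a generator $v_i$ can interact with an index pair $(k,l)$, and that the generalized fusing strings are defined for all ordered pairs $1 \leq k \neq l \leq n$ (not merely $k < l$) so that the image $\mu_{v_i(k)\, v_i(l)}$ always makes sense. Once the case analysis is organized by the position of $\{i,i+1\}$ relative to $\{k,l\}$, each case is a one-line citation, and the multiplicativity of conjugation does the rest.
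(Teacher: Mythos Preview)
Your approach---reduce to the generating transpositions by multiplicativity of conjugation, then invoke Lemma~\ref{lemma:action}---is exactly what the paper intends; the corollary is stated there without proof, as an immediate consequence of that lemma.

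One point deserves more care than ``each case is a one-line citation.'' Parts~(2)--(4) of Lemma~\ref{lemma:action} are stated only for the \emph{adjacent} pairs $(k,l)\in\{(i,i+1),(i+1,i)\}$, and part~(1) as written (with hypotheses $|k-i|>1$ and $|l-i|>1$) does not literally cover every disjoint case: for instance $v_{m+2}\,\mu_{m,m+1}\,v_{m+2}$ falls outside all four parts, since $|(m+1)-(m+2)|=1$. The fix is easy but should be made explicit. First, for adjacent pairs $(m,m+1)$ the missing instances of ``$v_j$ with $j\notin\{m-1,m,m+1\}$ commutes with $\mu_m$'' come straight from the commuting relations~\eqref{eq:mnfc}. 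Second, for a general pair $(k,l)$ you do not cite the lemma directly; instead recall that $\mu_{kl}$ is \emph{defined} as a specific conjugate $w\,\mu_m\,w^{-1}$ with $w$ a word in the $v_j$'s, so once the base formula $v_j\,\mu_{m,m+1}\,v_j=\mu_{v_j(m),v_j(m+1)}$ is established for all $j,m$, your own multiplicativity argument applied to $\alpha w$ gives the formula for every $\mu_{kl}$ at once (and simultaneously shows the definition of $\mu_{kl}$ is independent of the choice of $w$). With that adjustment the argument is complete, and your treatment of transitivity is fine as stated.
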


We are ready now to find a presentation for $VSP_n$ using the Reidemeister-Schreier method (see \cite[Chapter 2.2]{MKS}). By Corollary~\ref{cor:action}, the submonoid of $VSP_n$ generated by the generalized fusing strings is normal in $VSB_n$. We show that this submonoid coincides with $VSP_n$ and find the relations that its generators satisfy. 

%Theorem 17
\begin{theorem} \label{prespure}
The pure virtual singular braid monoid, $VSP_n$ is generated by the generalized fusing strings $\{ \mu_{kl}, \mu_{kl}^{-1}, \gamma_{kl} \, | \, 1 \leq k \neq l \leq n \}$, subject to the following relations:
\begin{eqnarray}
\mu_{kl} \mu_{kl}^{-1} &=&  \mu_{kl}^{-1} \mu_{kl} = 1_n   \label{eq:pureinv}\\ 
\mu_{ij}\mu_{ik}\mu_{jk} &=& \mu_{jk} \mu_{ik} \mu_{ij}  \label{eq:pureYB} \\
\mu_{ij} \mu_{ik} \gamma_{jk} &=& \gamma_{jk} \mu_{ik} \mu_{ij}  \label{eq:pureYB-mixt}\\
\gamma_{ij} \mu_{ik} \mu_{jk} &=& \mu_{jk} \mu_{ik} \gamma_{ij}  \label{eq:pureYB-mixt-two}\\
\mu_{kl} \gamma_{lk} &=& \gamma_{kl} \mu_{lk} \label{eq:pureTwist} \\
\mu_{ij} \mu_{kl} = \mu_{kl} \mu_{ij} \, ,\,\,\,  \,\,\,\, \gamma_{ij} \gamma_{kl} &=& \gamma_{kl} \gamma_{ij}  \, ,\,\,\, \,\,\,\, \mu_{ij} \gamma_{kl} = \gamma_{kl} \mu_{ij} \,, \label{eq:purecommuting}
\end{eqnarray}
where distinct letters stand for distinct indices.
\end{theorem}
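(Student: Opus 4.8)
The strategy is to apply the Reidemeister--Schreier rewriting process to the presentation of $VSB_n$ (either the standard one of Definition~\ref{def:vsbn} or, more conveniently, the fusing-string presentation $M_n$ from Definition~\ref{def:mn}) relative to the kernel $VSP_n = \ker(\pi)$. The first task is to fix a \emph{Schreier transversal} for $VSP_n$ in $VSB_n$: since $\pi \co VSB_n \to S_n$ is surjective with the $v_i$ mapping onto the transpositions, a natural transversal is a set of positive words in the $v_i$ representing each element of $S_n$ (e.g.\ the lexicographically minimal reduced word for each permutation). With such a transversal chosen, the Reidemeister--Schreier generators are the elements $t \, x \, \overline{(t x)}^{-1}$, where $t$ ranges over the transversal and $x$ over the monoid generators $\mu_i^{\pm1}, \gamma_i, v_i$, and $\overline{w}$ denotes the transversal representative of $\pi(w)$.

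\textbf{Identifying the generators.} The key first step is to show that these Reidemeister--Schreier generators are exactly the generalized fusing strings $\mu_{kl}, \mu_{kl}^{-1}, \gamma_{kl}$. When $x$ is a virtual generator $v_i$, the product $t v_i \overline{(t v_i)}^{-1}$ collapses to $1_n$ (since $v_i$ only permutes and the transversal is Schreier), so virtual generators contribute nothing new. When $x = \mu_i$ or $\gamma_i$, conjugating the elementary fusing string by the transversal word $t$ permutes its indices, by Corollary~\ref{cor:action}; running $t$ over the whole symmetric group then produces precisely the full family $\{\mu_{kl}, \gamma_{kl}\}$ with all distinct ordered index pairs, using transitivity of the $S_n$-action. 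This establishes that $VSP_n$ is generated by the claimed set, which also verifies (with Corollary~\ref{cor:action}) that the submonoid they generate is all of $VSP_n$.

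\textbf{Deriving the relations.} The second, more laborious step is to rewrite each defining relation of $M_n$, conjugated by each transversal element $t$, using the rewriting function, and then simplify via Corollary~\ref{cor:action} to express everything in generalized-fusing-string notation. The Yang--Baxter relation~\eqref{eq:mnr3} yields~\eqref{eq:pureYB}; the mixed relation~\eqref{eq:mnrs31} yields~\eqref{eq:pureYB-mixt} and~\eqref{eq:pureYB-mixt-two} according to the two ways the singular string can sit relative to the transversal; the twist~\eqref{eq:mnr1} yields~\eqref{eq:pureTwist}; the commuting relations~\eqref{eq:mnfc} produce~\eqref{eq:purecommuting}; and the inverse relations give~\eqref{eq:pureinv}. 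The detour/interchange relations~\eqref{eq:mnv3}--\eqref{eq:mnvs3}, which only move virtual crossings, become trivial after rewriting or are absorbed into the index-permutation bookkeeping provided by Lemma~\ref{lemma:action} and Corollary~\ref{cor:action}.

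\textbf{Expected main obstacle.} The routine-but-delicate heart of the argument is the bookkeeping in the rewriting step: one must track, for each generator $x$ and each transversal word $t$, exactly which indices $k,l$ arise when $t \, (\text{relation}) \, t^{-1}$ is re-expressed, and verify that conjugating a relation in the $\mu_i,\gamma_i$ by an arbitrary permutation produces \emph{only} the relations listed (with distinct letters standing for distinct indices) and introduces no spurious extra relations. The cleanest way to control this is to lean on Corollary~\ref{cor:action}, so that conjugation simply relabels indices and each $M_n$-relation maps to an $S_n$-orbit of relations; checking that this orbit is exactly the stated family~\eqref{eq:pureinv}--\eqref{eq:purecommuting}, and that relations indexed by non-distinct letters either are vacuous or reduce to ones already present, is where the care is required.
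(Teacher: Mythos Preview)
Your proposal is correct and follows essentially the same approach as the paper: both apply the Reidemeister--Schreier method with a Schreier transversal consisting of words in the virtual generators $v_i$, and both lean on Corollary~\ref{cor:action} to identify the Schreier generators with the generalized fusing strings and to obtain the full families of relations by $S_n$-conjugation. The only cosmetic difference is that the paper carries out the rewriting on the \emph{standard} presentation (generators $\sigma_i^{\pm1},\tau_i,v_i$) rather than on $M_n$, so that relations~\eqref{eq:pureYB-mixt} and~\eqref{eq:pureYB-mixt-two} arise there from the two instances $j=i+1$ and $j=i-1$ of the $RS3$ relation rather than from a transversal dichotomy.
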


\begin{proof}
We will borrow some notations from~\cite{Ba}.

Let $m_{kl} = v_{k-1}v_{k-2} \dots v_l$ if $l < k$ and $m_{kl} = 1$ in all other cases, and let
\[ \Lambda_n : = \left\{ \prod_{k=2}^n m_k, j_k  \, | \, 1 \leq j_k \leq k  \right \}. \]
Then $\Lambda_n$ is a Schreier system of right coset representatives of $VSP_n$ in $VSB_n$. A Schreier right coset representative is such that any initial segment of a representative is again a representative. 

Define the map $\bar{\empty} : VSB_n \to \Lambda_n$ which takes a braid element $\omega \in VSB_n$ to its representative $\overline{\omega} \in \Lambda_n$. It is easy to see that $\omega \, \overline{\omega}\,^{-1} \in VSP_n$, for all $\omega \in VSB_n$. According to~\cite[Theorem 2.7]{MKS}, the monoid $VSP_n$ is generated by
\[ s_{\lambda, a} = \lambda a \, (\overline{\lambda a})^{-1},\]
for all $\lambda \in \Lambda_n$ and for all generators $a$ of $VSB_n$ (that is, elementary virtual singular braids $\sigma_i, \sigma_i^{-1}, \tau_i$ and $v_i$). Note that for the purpose of this proof, we work here with the standard presentation for $VSB_n$ given in Definition~\ref{def:vsbn}. We also remark that although not all elements in $VSB_n$ are invertible, $\overline{\omega}$ is invertible for any $\omega \in VSB_n$, and therefore $s_{\lambda, a}$ is well defined for all generators, including $\tau_i$.

By definition of $\Lambda_n$, $\overline{\lambda v_i} = \lambda v_i$ for all $\lambda \in \Lambda_n$ and $1 \leq i \leq n-1$. In addition, $\overline{\lambda \sigma_i} = \overline{\lambda \sigma_i^{-1}} = \overline{\lambda \tau_i} = \lambda v_i $ for all $1 \leq i \leq n-1$.
Then, for all $\lambda \in \Lambda_n$, we have:
 \begin{eqnarray*}
 s_{\lambda, v_i} &=& \lambda v_i (\overline{\lambda v_i})^{-1} = \lambda v_i (\lambda v_i)^{-1} = \lambda v_i v_i \lambda^{-1} = 1_n\\
  s_{\lambda, \sigma_i} &=& \lambda \sigma_i (\overline{\lambda \sigma_i})^{-1} = \lambda \sigma_i (\lambda v_i)^{-1} = \lambda (\sigma_i v_i)     \lambda^{-1} = \lambda \mu_{i, i+1} \lambda^{-1} \\
  s_{\lambda, \tau_i} &=& \lambda \tau_i (\overline{\lambda \tau_i})^{-1} = \lambda \tau_i (\lambda v_i)^{-1} = \lambda (\tau_i v_i) \lambda^{-1} = \lambda \gamma_{i, i+1} \lambda^{-1}\\
  s_{\lambda, \sigma_i^{-1}} &=& \lambda \sigma_i^{-1} (\overline{\lambda \sigma_i^{-1}})^{-1} = \lambda \sigma_i^{-1} (\lambda v_i)^{-1} = \lambda (\sigma_i^{-1} v_i) \lambda^{-1} = \lambda \mu_{i+1, i}^{-1} \lambda^{-1}.
  \end{eqnarray*}
Thus for $\lambda = 1_n$, we get that $s_{1_n, \sigma_i}, s_{1_n, \gamma_i}$ and $s_{1_n, \sigma_i^{-1}}$ are the fusing strings $\mu_{i, i+1}, \gamma_{i, i+1}$ and $\mu_{i+1, i}^{-1}$, respectively. By Corollary~\ref{cor:action}, we have that every $s_{\lambda, \sigma_i}$ is equal to some $\mu_{kl}$ and, conversely, each $\mu_{kl}$ is equal to some $s_{\lambda, \sigma_i}$. Similarly, each $s_{\lambda, \tau_i}$ and $s_{\lambda, \sigma_i^{-1}}$ are equal to some $\gamma_{kl}$ and, respectively, $\mu_{kl}^{-1}$. Conversely, each $\gamma_{kl}$ and $\mu_{kl}^{-1}$ are equal to some $s_{\lambda, \tau_i}$ and $s_{\lambda, \sigma_i^{-1}}$, respectively. Therefore, $VSP_n$ is generated by $\{ \mu_{kl}, \mu_{kl}^{-1}, \gamma_{kl} \, | \, 1 \leq k \neq l \leq n \}$.
 
Next we need to find the defining relations for $VSP_n$. Continuing with the Reidemeister-Schreier method, we define a rewriting process $\mathcal{R}$ which converts a word in $VSP_n$ written in terms of the generators of $VSB_n$ into a word written in terms of the generators of $VSP_n$. 

If $ \omega = a_1 a_2 \cdots a_t, \,\,\, \text{where}  \,\, a_j \in \{\sigma_i, \sigma_i^{-1}, \tau_i, v_i  \, | \, 1 \leq i \leq n-1 \},$
 then the rewritten word $\mathcal{R}(\omega)$ is given by
 \[ \mathcal{R}(\omega) = s_{k_1, a_1}^{\epsilon_1} s_{k_2, a_2}^{\epsilon_2} \cdots s_{k_t, a_t}^{\epsilon_t}  \]
 where, for $1 \leq j \leq t$, 
 \begin{itemize}
  \item $\epsilon_j = -1$ and $k_j =  \overline{a_1 a_2 \cdots a_{j-1} a_j}$ \, if $a_j = \sigma_i^{-1}$, for some $1 \leq i \leq n-1$, and
 \item $\epsilon_j = 1$ and $k_j =  \overline{a_1 a_2 \cdots a_{j-1}}$ \, if $a_j$ is any of the other type of generators.
 \end{itemize}

 By~\cite[Theorem 2.8]{MKS}, the defining relations for the monoid $VSP_n$ are given by $\mathcal{R}(\lambda r_u \lambda^{-1})$, for all $\lambda \in \Lambda_n$ and all defining relations $r_u$ of $VSB_n$. (We remark that Theorem 2.8 in~\cite{MKS} is stated for groups; however, its proof works for monoids as well, as long as $\overline{\omega}$ is invertible and $\mathcal{R}(\omega)$ is well defined; this is certainly true in our case.) We find first all of the relations $\mathcal{R}(r_u)$; that is, relations corresponding to $\lambda = 1_n$. Then, the relations for nontrivial $\lambda \in \Lambda_n$ are obtained by conjugating---by representatives in $\Lambda_n$---the previously obtained relations $\mathcal{R}(r_u)$.
 
 Consider the relation $r_1: \sigma_i \sigma_i^{-1} = \sigma_i^{-1} \sigma_i = 1_n$.  Clearly, $\mathcal{R}(1_n) = 1_n$. Then,

 \begin{eqnarray*}
 \mathcal{R}(\sigma_i \sigma_i^{-1}) &=& s_{1_n, \sigma_i} \cdot s_{\overline{\sigma_i \sigma_i^{-1}}, \sigma_i}^{-1} = s_{1_n, \sigma_i} \cdot s_{1_n, \sigma_i}^{-1} = \mu_{i, i+1} \, \mu_{i, i+1}^{-1}\\
  \mathcal{R}(\sigma_i^{-1} \sigma_i) &=& s_{\overline{\sigma_i^{-1}}, \sigma_i}^{-1} \cdot s_{\overline{\sigma_i^{-1}}, \sigma_i} =s_{v_i, \sigma_i}^{-1} \cdot s_{v_i, \sigma_i} \\
    &=& (v_i \mu_{i, i+1} v_i)^{-1} (v_i \mu_{i, i+1} v_i) = \mu_{i+1, i}^{-1} \mu_{i+1, i} 
  \end{eqnarray*} 
  where the last equality above holds by Lemma~\ref{lemma:action}. Thus we have that 
  \[ \mathcal{R}(r_1): \mu_{i, i+1} \, \mu_{i, i+1}^{-1} = \mu_{i+1, i}^{-1} \mu_{i+1, i} = 1_n.\]
Conjugating by $v_i$, we obtain 
\[  \mathcal{R}(v_ir_1v_i): \mu_{i+1, i} \, \mu_{i+1, i}^{-1} = \mu_{i, i+1}^{-1} \mu_{i, i+1} = 1_n. \]
Conjugating by all representatives in $\Lambda_n$ we obtain the relations~\eqref{eq:pureinv}.

The rewriting process applied to relations $r_2: v_i^2 = 1_n$ and $r_4: v_iv_{i+1} v_i = v_{i+1} v_i v_{i+1}$ produce the trivial relation $1_n = 1_n$, since $s_{\lambda, v_i} = 1_n$ for any $\lambda \in \Lambda_n$.

We consider next the relation $r_3: \sigma_i \sigma_{i+1} \sigma_i = \sigma_{i+1} \sigma_i \sigma_{i+1}$. Applying the rewriting process and Corollary~\ref{cor:action}, we have:
\begin{eqnarray*}
\mathcal{R} (\sigma_i \sigma_{i+1} \sigma_i ) &=& s_{1_n, \sigma_i} \, s_{\overline{\sigma_i}, \sigma_{i+1}} \, s_{\overline{\sigma_i \sigma_{i+1} }, \sigma_i} = s_{1_n, \sigma_i} \, s_{v_i, \sigma_{i+1}} \, s_{v_i v_{i+1, \sigma_i}} \\
 &=& (\mu_{i,i+1}) (v_i \mu_{i+1, i+2} v_i) (v_i v_{i+1}\mu_{i,i+1} v_{i+1}v_i) \\
 &=& \mu_{i, i+1} \, \mu_{i, i+2} \, \mu_{i+1, i+2},
\end{eqnarray*}
and 
\begin{eqnarray*}
\mathcal{R} (\sigma_{i+1} \sigma_i \sigma_{i+1} ) &=& s_{1_n, \sigma_{i+1}} \, s_{\overline{\sigma_{i+1}}, \sigma_i} \, s_{\overline{\sigma_{i+1} \sigma_i}, \sigma_{i+1} }
= s_{1_n, \sigma_{i+1}} \, s_{v_{i+1}, \sigma_i} s_{v_{i+1} v_i, \sigma_{i+1}} \\
&=& (\mu_{i+1, i+2})(v_{i+1} \mu_{i, i+i} v_{i+1}) (v_{i+1} v_i \mu_{i+1, i+2}  v_i v_{i+1}) \\
&=& \mu_{i+1, i+2} \, \mu_{i, i+2} \, \mu_{i, i+1}.
\end{eqnarray*}
Therefore, $\mathcal{R}(r_3)$ is represented by $\mu_{i, i+1} \, \mu_{i, i+2} \, \mu_{i+1, i+2} = \mu_{i+1, i+2} \, \mu_{i, i+2} \, \mu_{i, i+1}.$ Conjugating this identity by all representatives in $\Lambda_n$, will produce the relations~\eqref{eq:pureYB}. 

Relations $r_5: v_i \sigma_{i+1} v_i = v_{i+1} \sigma_i v_{i+1}$ and $r_6: v_i \tau_{i+1} v_i = v_{i+1} \tau_i v_{i+1}$ induce the trivial relation in $VSP_n$. We show this for the latter relation.
\begin{eqnarray*}
\mathcal{R}(v_i \tau_{i+1} v_i) &=& s_{1_n, v_i} \, s_{\overline{v_i}, \tau_{i+1}} \, s_{\overline{v_i \tau_{i+1}}, v_i} = 1_n  \,s_{v_i, \tau_{i+1}} \, 1_n = v_i \gamma_{i+1, i+2}v_i = \gamma_{i, i+2}\\
\mathcal{R}(v_{i+1} \tau_i v_{i+1}) &=& s_{1_n, v_{i+1}} \, s_{\overline{v_{i+1}}, \tau_{i}} \, s_{\overline{v_{i+1} \tau_{i}}, v_{i+1}} = 1_n  \,s_{v_{i+1}, \tau_{i}} \, 1_n = v_{i+1} \gamma_{i, i+1}v_{i+1} = \gamma_{i, i+2}
\end{eqnarray*}

Let us consider now the relation $r_7: \sigma_i \sigma_{i+1} \tau_i = \tau_{i+1} \sigma_i \sigma_{i+1}$ for $VSB_n$. Applying the rewriting process to the left hand side, we obtain the following:
\begin{eqnarray*}
\mathcal{R}(\sigma_i \sigma_{i+1} \tau_i ) &=& s_{1_n, \sigma_i} \, s_{\overline{\sigma_i}, \sigma_{i+1}} \, s_{\overline{\sigma_i \sigma_{i+1} }, \tau_i} = s_{1_n, \sigma_i} \, s_{v_i, \sigma_{i+1}} \, s_{v_i v_{i+1, \tau_i}} \\
 &=& (\mu_{i,i+1}) (v_i \mu_{i+1, i+2} v_i) (v_i v_{i+1}\gamma_{i,i+1} v_{i+1}v_i) \\
 &=& \mu_{i, i+1} \, \mu_{i, i+2} \, \gamma_{i+1, i+2}.
\end{eqnarray*}
For the right hand side we get:
\begin{eqnarray*}
\mathcal{R} (\tau_{i+1} \sigma_i \sigma_{i+1} ) &=& s_{1_n, \tau_{i+1}} \, s_{\overline{\tau_{i+1}}, \sigma_i} \, s_{\overline{\tau_{i+1} \sigma_i}, \sigma_{i+1} }
= s_{1_n, \tau_{i+1}} \, s_{v_{i+1}, \sigma_i} s_{v_{i+1} v_i, \sigma_{i+1}} \\
&=& (\gamma_{i+1, i+2})(v_{i+1} \mu_{i, i+i} v_{i+1}) (v_{i+1} v_i \mu_{i+1, i+2}  v_i v_{i+1}) \\
&=& \gamma_{i+1, i+2} \, \mu_{i, i+2} \, \mu_{i, i+1}.
\end{eqnarray*}

Hence, we arrive at $\mathcal{R}(r_7): \mu_{i, i+1} \, \mu_{i, i+2} \, \gamma_{i+1, i+2} = \gamma_{i+1, i+2} \, \mu_{i, i+2} \, \mu_{i, i+1}$. Conjugating this by all representatives in $\Lambda_n$, relations~\eqref{eq:pureYB-mixt} follow immediately.

Applying the rewriting process $\mathcal{R}$ to the relation $r_7': \sigma_{i+1}\sigma_i \tau_{i+1} = \tau_i \sigma_{i+1}\sigma_i$ we obtain:
\[ \mathcal{R}(r_7'): \gamma_{i, i+1} \, \mu_{i, i+2} \, \mu_{i+1, i+2} = \mu_{i+1, i+2} \, \mu_{i, i+2} \, \gamma_{i, i+1}.\]
Conjugating this by all representatives in $\Lambda_n$, we get relations~\eqref{eq:pureYB-mixt-two}.

By applying the rewriting process to $r_8: \sigma_i \tau_i = \tau_i \sigma_i$, we get the following:
\begin{eqnarray*}
\mathcal{R}(\sigma_i \tau_i ) &=& s_{1_n, \sigma_i} \, s_{\overline{\sigma_i}, \tau_i}  = s_{1_n, \sigma_i} \, s_{v_i, \tau_i} = (\mu_{i,i+1}) (v_i \gamma_{i, i+1}v_i) = \mu_{i,i+1} \, \gamma_{i+1, i}\\
\mathcal{R}(\tau_i \sigma_i  ) &=& s_{1_n, \tau_i} \, s_{\overline{\tau_i}, \sigma_i}  = s_{1_n, \tau_i} \, s_{v_i, \sigma_i} = (\gamma_{i,i+1}) (v_i \mu_{i, i+1}v_i) = \gamma_{i,i+1} \, \mu_{i+1, i}.
\end{eqnarray*}
Therefore, $\mathcal{R}(r_8)$ has the form $\mu_{i,i+1} \, \gamma_{i+1, i} = \gamma_{i,i+1} \, \mu_{i+1, i}$. It follows that relations $\mathcal{R}(\lambda r_8 \lambda^{-1})$ for all representatives $\lambda \in \Lambda_n$ have the form $\mu_{kl} \gamma_{lk} = \gamma_{kl} \mu_{lk}$. Thus relations~\eqref{eq:pureTwist} hold in $VSP_n$.

It remains to show that the rewriting process applied to the commuting relations $r_9$ of $VSB_n$ yields the commuting relations~\eqref{eq:purecommuting} of $VSP_n$. For example, let $|i-j| >1$ and consider the relations $\sigma_i \tau_j = \tau_j \sigma_i$. Then,\begin{eqnarray*}
\mathcal{R}(\sigma_i \tau_j ) &=&s_{1_n, \sigma_i} \, s_{\overline{\sigma_i}, \tau_j}  = (\mu_{i, i+1}) (v_i \gamma_{j, j+1}v_i) = \mu_{i,i+1} \, \gamma_{j, j+1}\\
\mathcal{R}( \tau_j \sigma_i) &=&s_{1_n, \tau_j} \, s_{\overline{\tau_j}, \sigma_i}  = (\gamma_{j, j+1}) (v_j \mu_{i, i+1}v_j) =  \gamma_{j, j+1} \, \mu_{i,i+1}. 
\end{eqnarray*}
We have derived the relation $\mu_{i, i+1} \gamma_{j, j+1} = \gamma_{j, j+1} \mu_{i, i+1}$. Conjugating it by all representatives in $\Lambda_n$, we get:
\[ \mu_{ij} \gamma_{kl} = \gamma_{kl} \mu_{ij}, \,\, \text{where}\,\, \{i, j\} \cap \{ k, l\} = \emptyset. \]
Similarly, by applying the rewriting process to relations $\sigma_i \sigma_j = \sigma_j \sigma_i$ and $\tau_i \tau_j = \tau_j \tau_i$, where $|i-j| >1$, we obtain:
\[ \mu_{ij} \mu_{kl} = \mu_{kl} \mu_{i, j} \,\, \text{and} \,\,\gamma_{ij} \gamma_{kl} = \gamma_{kl} \gamma_{ij}, \,\, \text{where}\,\, \{i, j\} \cap \{ k, l\} = \emptyset.  \] 

Hence, relations~\eqref{eq:purecommuting} hold in $VSP_n$. It is easy to see that the commuting relations $\sigma_i v_j = v_j \sigma_i$ and $\tau_i v_j = v_j \tau_i$ of $VSB_n$ result in the identity relation in $VSP_n$.
 \end{proof}
 
 %Remark 18
 \begin{remark} 
 We know that $VSP_n$ is a normal submonoid of $VSB_n$. Moreover, $VSB_n = VSP_n  \rtimes_{\phi} S_n$, where $\phi \co S_n \to \text{Aut}(VSP_n)$ is the permutation representation associated with the action of $S_n$ on $VSP_n$ as described by Lemma~\ref{lemma:action}. Therefore, $VSB_n$ splits over $VSP_n$ by $S_n$.
 \end{remark}
 
 %%%%%%%%%%%%%%%%%%%%%%%%%%%%%%%%%%%%%%%%%%%%%%%%%%%%%%%%%%%%%%%%%%%%%%%%
 
 \section{A representation of $VSB_n$} \label{sec:repres}
 
We wish to provide a  categorical description for the virtual singular braid monoid, $VSB_n$. Since the notion of a monoidal category is the categorification of the notion of a monoid, we define a monoidal category $\textbf{FS}$---the category of fusing strings---that can be used as a basis for studying virtual singular braids and the algebraic structure among them. We also use this category to construct a representation of $VSB_n$. For an introduction to category theory and in particular monoidal categories, we refer the reader to, for example, the books~\cite{BK, E, Ma}. 
 
% Through this categorification, we regard a virtual singular braid as a morphism in $\mathcal{FC}$. Then the endpoints of a braid are represented by objects in this category.

 \subsection{The category of fusing strings}
$VSB_n$ embeds into $VSB_{n+1}$ via the map $\iota : VSB_n \to VSB_{n+1}$ given by adding a vertical strand to the right of a braid $b \in VSB_n$. This operation results in a braid $\iota(b) \in VSB_{n+1}$, and we can define: \[VSB_{\infty}: =   \bigcup_{n = 1} ^{\infty} VSB_n.\]
 In providing a categorical description of $VSB_{\infty}$, we rely on the definition of the monoid $M_n$ (see Definition~\ref{def:mn}) and the full set of elementary fusing strings. By Theorem~\ref{theorem:iso}, $VSB_n$ and $M_n$ are isomorphic monoids.
 
 %Definition 19
 \begin{definition}
 The \textit{category $\textbf{FS}$ of fusing strings}  is the strict monoidal category freely generated by one object, denoted by $*$, and the following four morphisms:
 \[ \mu = \raisebox{-20pt}{\includegraphics[height=0.6in]{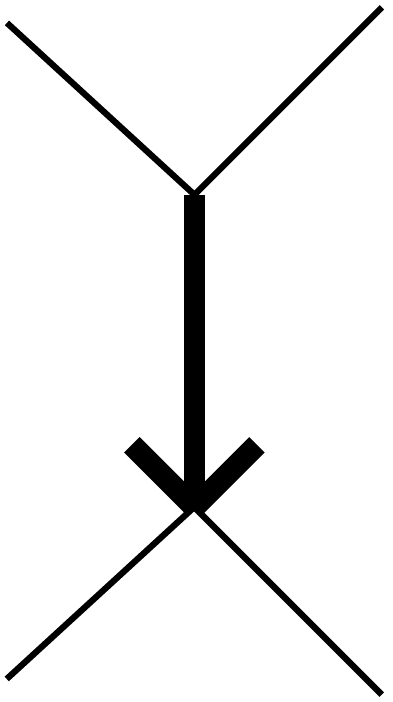}}  \hspace{1cm} 
 \mu^{-1} = \raisebox{25pt}{\includegraphics[height=0.6in, angle = 180]{cat-mu}} \hspace{1cm} 
 \gamma = \raisebox{-20pt}{\includegraphics[height=0.6in]{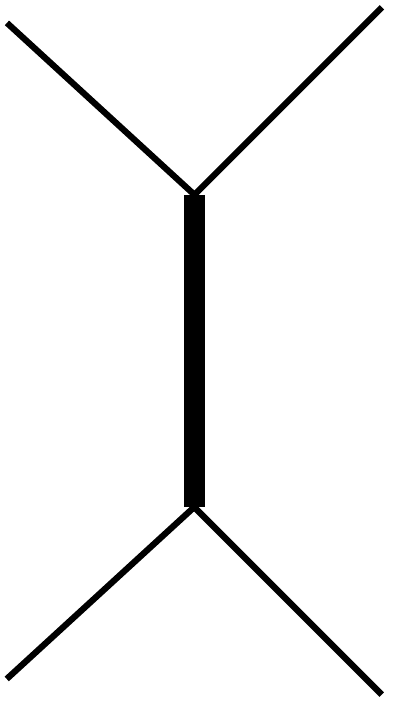}}  \hspace{1cm} 
 v = \raisebox{-20pt}{\includegraphics[height=0.6in]{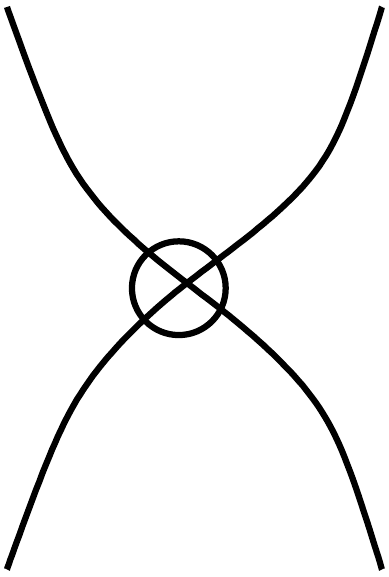}}  \]
where $\mu, \mu^{-1}, \gamma$ and $v : * \otimes * \longrightarrow * \otimes *$.
 \end{definition}
For each non-negative integer $n$, there exists an object $[n]$ in $\textbf{FS}$:
\[ [n] = * \otimes * \otimes \cdots \otimes * \, ,\]
containing $n $ copies of $*$. The object $[n]$ is a categorification of the top or bottom $n$ endpoints of a braid in $VSB_n$. 

A \textit{trivial morphism} in $\textbf{FS}$ is a tensor product of copies of the morphism $\vert: * \longrightarrow *$.
Then, a non-trivial morphism $[n] \longrightarrow [n]$ is a composition of tensor products of trivial morphisms and one of the generating morphisms $\mu, \mu^{-1}, \gamma$ or $v$. We think of a morphism $[n] \longrightarrow [n]$ in $\textbf{FS}$ as a categorification of an element in the monoid $M_n$ and, in particular, of a virtual singular braid in $VSB_n$. For example,
\begin{eqnarray}
v_i &=&  \vert \otimes \cdots \otimes \vert  \otimes v \otimes \vert \otimes \cdots \otimes \vert :  [n] \longrightarrow [n]  \label{categ-v_i} \\
 \gamma_i &=& \vert \otimes \cdots \otimes \vert  \otimes \gamma \otimes \vert \otimes \cdots \otimes \vert : [n] \longrightarrow [n] ,  \label{categ-gamma_i}\\
\mu_i^{\pm 1} &=&  \vert \otimes \cdots \otimes \vert  \otimes \mu^{\pm 1} \otimes \vert \otimes \cdots \otimes \vert :  [n] \longrightarrow [n] \label{categ-mu_i} \\
 \end{eqnarray}
where $v, \gamma$ and $\mu^{\pm 1}$ are in the $i$th place of the tensor product, are categorifications of $v_i$ and the elementary fusing strings $\mu_i$ and $\gamma_i$, respectively.

 \subsection{A representation of $VSB_n$ into linear operators}

Let $V$ be a vector space over a field $\mathbb{K}$ and let $\mathcal{T} \co V \otimes V \to V \otimes V$, $R \co V \otimes V \to V \otimes V$  and $S \co V \otimes V \to V \otimes V$ be linear operators. We define a monoidal functor $\mathcal{F} \co \textbf{FS} \to \mathbb{K}-\textbf{Vec}$ given by $\mathcal{F}(*): =V$ and
\[\mathcal{F}(v): = \mathcal{T}, \,\, \mathcal{F}(\mu): = R,  \,\, \mathcal{F}(\gamma): = S, \]
where $\mathbb{K} -\textbf{Vec}$ is the category of vector spaces over $\mathbb{K}$.
Then, 
\begin{eqnarray*}
\mathcal{F}(v_i) &=& \id_V \otimes \cdots \otimes \id_V \otimes \mathcal{T} \otimes \id_V \cdots \otimes \id_V \\
\mathcal{F}(\mu_i) &=& \id_V \otimes \cdots \otimes \id_V \otimes R \otimes \id_V \cdots \otimes \id_V \\
\mathcal{F}(\gamma_i) &=& \id_V \otimes \cdots \otimes \id_V \otimes S \otimes \id_V \cdots \otimes \id_V ,
\end{eqnarray*}
where $\mathcal{T}, R$ and respectively $S$  occur in the $i$th place of the tensor product. Note that $\mathcal{F}(v_i), \mathcal{F}(\mu_i)$ and $\mathcal{F}(\gamma_i) \in End(V^{\otimes n})$.

%Proposition 20
\begin{proposition} \label{operators}
If the linear operators $\mathcal{T} \co V \otimes V \to V \otimes V$, $R\co V \otimes V \to V \otimes V$  and $S \co V \otimes V \to V \otimes V$ satisfy the following:  %See definition 11
\begin{enumerate}
\item $ \mathcal{T} \, \text{and} \, R \, \text{are invertible, and} \, \mathcal{T}^2 = \id_{V \otimes V} $
    \vspace{0.2cm}
\item $ (\mathcal{T} \otimes \id_V) (\id_V \otimes \mathcal{T}) (\mathcal{T} \otimes \id_V) = (\id_V \otimes \mathcal{T}) (\mathcal{T} \otimes \id_V)  (\id_V \otimes \mathcal{T}) $ %V3
     \vspace{0.2cm}
\item $(\mathcal{T} \otimes \id_V) (\id_V \otimes R) (\mathcal{T} \otimes \id_V) = (\id_V \otimes \mathcal{T}) (R \otimes \id_V)  (\id_V \otimes \mathcal{T})$
     \vspace{0.2cm}
\item $(\mathcal{T} \otimes \id_V) (\id_V \otimes S) (\mathcal{T} \otimes \id_V) = (\id_V \otimes \mathcal{T}) (S \otimes \id_V)  (\id_V \otimes \mathcal{T}) $
     \vspace{0.2cm}
\item %YBE
$(\id_V \otimes R) [ (\id_V \otimes \mathcal{T})(R \otimes \id_V) (\id_V \otimes \mathcal{T}) ](R \otimes \id_V) =\\ 
 (R \otimes \id_V) [ (\id_V \otimes \mathcal{T})(R \otimes \id_V) (\id_V \otimes \mathcal{T}) ]  (\id_V \otimes R) 
 $
    \vspace{0.2cm}
\item %YBE like
$(\id_V \otimes R) [ (\id_V \otimes \mathcal{T})(R \otimes \id_V) (\id_V \otimes \mathcal{T}) ](S \otimes \id_V) = \\
 (S \otimes \id_V) [ (\id_V \otimes \mathcal{T})(R \otimes \id_V) (\id_V \otimes \mathcal{T}) ]  (\id_V \otimes R) 
$
    \vspace{0.2cm}
\item $R \mathcal{T} S = S \mathcal{T} R$,
\end{enumerate}
then $\mathcal{F}$ is a representation of the $n$-stranded virtual singular braid monoid, $VSB_n$, to a submonoid of $End(V^{\otimes n})$. 
\end{proposition}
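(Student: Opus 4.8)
The plan is to exploit the isomorphism $VSB_n \cong M_n$ furnished by Theorem~\ref{theorem:iso}, together with the explicit presentation of $M_n$ in Definition~\ref{def:mn}. Since $\mathcal{F}$ is by construction a strict monoidal functor out of a \emph{freely} generated category, its value on any composite of tensor products of generating morphisms is determined multiplicatively; hence to prove that $\mathcal{F}$ descends to a monoid homomorphism $VSB_n \to End(V^{\otimes n})$, it suffices to verify that the operators $\mathcal{F}(v_i) = \id_V^{\otimes(i-1)} \otimes \mathcal{T} \otimes \id_V^{\otimes(n-i-1)}$, $\mathcal{F}(\mu_i^{\pm 1})$, and $\mathcal{F}(\gamma_i)$ satisfy the images under $\mathcal{F}$ of each defining relation~\eqref{eq:mnid}--\eqref{eq:mnfc} of $M_n$. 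In particular, hypothesis (1) guarantees that $R$ is invertible, so that setting $\mathcal{F}(\mu_i^{-1}) := \mathcal{F}(\mu_i)^{-1}$ is legitimate and the inverse relations in~\eqref{eq:mnid} are respected.

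First I would fix the dictionary between words in the generators of $M_n$ and strings of operators on $V^{\otimes n}$. Because $v_i, \mu_i, \gamma_i$ all act as the local operators $\mathcal{T}, R, S$ on the $i$-th and $(i+1)$-st tensor factors (tensored with identities elsewhere), every relation of $M_n$ involving only two adjacent indices $i, i+1$ is \emph{local}: after tensoring with identity operators on the remaining factors, checking it reduces to checking the corresponding identity on $V^{\otimes 3}$ (or $V^{\otimes 2}$) in the leftmost positions $1,2,3$ (resp. $1,2$). Thus it is enough to verify each relation in its leftmost instance, under the translation $v_1 \rightsquigarrow \mathcal{T} \otimes \id_V$, $v_2 \rightsquigarrow \id_V \otimes \mathcal{T}$, $\mu_1 \rightsquigarrow R \otimes \id_V$, $\mu_2 \rightsquigarrow \id_V \otimes R$, and similarly with $S$ in place of $R$ for $\gamma$.

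The heart of the argument is then the one-to-one match between the hypotheses and the relations. Hypothesis (1) records $\mathcal{T}^2 = \id_{V \otimes V}$ and the invertibility of $R$, which are exactly the images of~\eqref{eq:mnid}; hypothesis (2) is the braid relation for $\mathcal{T}$, i.e. the image of~\eqref{eq:mnv3}; hypotheses (3) and (4) are the images of the detour relations~\eqref{eq:mnvr3} and~\eqref{eq:mnvs3}; hypotheses (5) and (6) are the images of the Yang--Baxter-type relations~\eqref{eq:mnr3} and~\eqref{eq:mnrs31}; and hypothesis (7) is the image of the singular twist relation~\eqref{eq:mnr1}, since $\mu_i v_i \gamma_i \rightsquigarrow R\mathcal{T}S$ and $\gamma_i v_i \mu_i \rightsquigarrow S\mathcal{T}R$ on the two factors $i, i+1$. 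Each verification is then an immediate reading-off once the dictionary is in place.

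The only family of relations left unaddressed by an explicit hypothesis is the commuting relations~\eqref{eq:mnfc} with $|i-j|>1$; these hold for free, because when $|i-j|>1$ the operators $\mathcal{F}(\alpha_i)$ and $\mathcal{F}(\beta_j)$ act nontrivially on disjoint sets of tensor factors and therefore commute in $End(V^{\otimes n})$. I expect the main (though modest) obstacle to be purely bookkeeping: keeping the tensor positions straight so that each abstract word matches the correct operator string, and confirming that a general-index instance of a relation follows from its leftmost instance by tensoring with identities. Once the dictionary is fixed and these matchings are checked, $\mathcal{F}$ factors through the presentation of $M_n \cong VSB_n$, and its image is a submonoid of $End(V^{\otimes n})$, giving the desired representation.
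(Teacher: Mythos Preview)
Your proposal is correct and follows exactly the same route as the paper: invoke the isomorphism $VSB_n\cong M_n$ from Theorem~\ref{theorem:iso}, and then match the hypotheses (1)--(7) one-to-one with the defining relations~\eqref{eq:mnid}--\eqref{eq:mnr1} of $M_n$ under the functor $\mathcal{F}$, with the commuting relations~\eqref{eq:mnfc} coming for free from the tensor structure. The paper's own proof is the single sentence ``The statement follows from Definition~\ref{def:mn}, Theorem~\ref{theorem:iso}, and the definition of the functor $\mathcal{F}$,'' so your write-up is simply a more detailed unpacking of the same argument.
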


\begin{proof} The statement follows from Definition~\ref{def:mn}, Theorem~\ref{theorem:iso}, and the definition of the functor $\mathcal{F}$.
\end{proof}

%Remark 21
\begin{remark}
Let $R_{12}: = R \otimes \id_V, \,\,\, R_{23}: = \id_V \otimes R, \,\,\,
 R_{13}: = (\id_V \otimes \mathcal{T})(R \otimes \id_V)(\id_V \otimes \mathcal{T}) $
Then, the fifth equation in Proposition~\ref{operators}, becomes $R_{12} R_{13} R_{23} = R_{23}R_{13}R_{12}$. That is, by the discussion in Section~\ref{sec:YBEincentive}, $R$ satisfies the Yang-Baxter equation.
\end{remark}

%Example 22
\begin{example}
Let $p$ be a prime number and $\xi = e^{2 \pi \sqrt{-1}/p}$. Let $V$ be a vector space over $\mathbb{C}$ with basis $\{b_k \, | \, k \in \mathbb{Z}_p  \}$. We define the linear operators $\mathcal{T} \co V \otimes V \to V \otimes V$, $R \co V \otimes V \to V \otimes V$  and $S \co V \otimes V \to V \otimes V$ given by
\begin{eqnarray}
\mathcal{T}(b_k \otimes b_l) &=& b_l \otimes b_k  \label{eq:v} \\% v
R(b_k \otimes b_l) &=& \xi^{kl} b_k \otimes b_l  \label{eq:mu}\\% \mu
S(b_k \otimes b_l) &=& (\xi^{kl} + \xi^{-kl}) b_k \otimes b_l \label{eq:tau}% \tay
\end{eqnarray}

We show that these linear maps satisfy the equations in Proposition~\ref{operators}. It is clear that $\mathcal{T}$ and $R$ are invertible operators and that $\mathcal{T}^2 = \id_{V \otimes V}$. Let $b_k, b_l$ and $b_m$ be arbitrary basis vectors for $V$. Then,
\begin{eqnarray*}
 (\mathcal{T} \otimes \id_V) (\id_V \otimes \mathcal{T}) (\mathcal{T} \otimes \id_V) (b_k \otimes b_l \otimes b_m)&=&  
 (\mathcal{T} \otimes \id_V) (\id_V \otimes \mathcal{T}) (b_l \otimes b_k \otimes b_m)\\
 &=&  (\mathcal{T} \otimes \id_V) (b_l \otimes b_m \otimes b_k)\\
 &=&b_m \otimes b_l \otimes b_k,
\end{eqnarray*}
and
\begin{eqnarray*}
(\id_V \otimes \mathcal{T}) (\mathcal{T} \otimes \id_V)  (\id_V \otimes \mathcal{T}) (b_k \otimes b_l \otimes b_m) &=&
(\id_V \otimes \mathcal{T}) (\mathcal{T} \otimes \id_V) (b_k \otimes b_m \otimes b_l) \\
&=& (\id_V \otimes \mathcal{T}) (b_m \otimes b_k \otimes b_l) \\
&=& b_m \otimes b_l \otimes b_k.
\end{eqnarray*}
Thus, the second equality holds. The third equality follows similarly, as shown below.

\begin{eqnarray*}
(\mathcal{T} \otimes \id_V) (\id_V \otimes R) (\mathcal{T} \otimes \id_V) (b_k \otimes b_l \otimes b_m) &=&
(\mathcal{T} \otimes \id_V) (\id_V \otimes R) (b_l \otimes b_k \otimes b_m)\\
 &=&  (\mathcal{T} \otimes \id_V) (b_l \otimes \xi^{km}b_k \otimes b_m)\\
 &=&\xi^{km}b_k \otimes b_l \otimes b_m,
\end{eqnarray*}
\begin{eqnarray*}
(\id_V \otimes \mathcal{T}) (R \otimes \id_V)  (\id_V \otimes \mathcal{T}) (b_k \otimes b_l \otimes b_m) &=&
(\id_V \otimes \mathcal{T}) (R \otimes \id_V) (b_k \otimes b_m \otimes b_l) \\
&=& (\id_V \otimes \mathcal{T}) (\xi^{km} b_k \otimes b_m \otimes b_l) \\
&=& \xi^{km} b_k \otimes b_l \otimes b_m.
\end{eqnarray*}
For the left hand side of the fourth equality, we obtain:
\begin{eqnarray*}
(\mathcal{T} \otimes \id_V) (\id_V \otimes S) (\mathcal{T} \otimes \id_V) (b_k \otimes b_l \otimes b_m) &=&
(\mathcal{T} \otimes \id_V) (\id_V \otimes S) (b_l \otimes b_k \otimes b_m)\\
 &=&  (\mathcal{T} \otimes \id_V) (b_l \otimes (\xi^{km} + \xi^{-km})b_k \otimes b_m)\\
 &=&(\xi^{km} + \xi^{-km}) b_k \otimes b_l \otimes b_m,
\end{eqnarray*}
and for the right hand side, we obtain:
\begin{eqnarray*}
(\id_V \otimes \mathcal{T}) (S \otimes \id_V)  (\id_V \otimes \mathcal{T}) (b_k \otimes b_l \otimes b_m) &=&
(\id_V \otimes \mathcal{T}) (S \otimes \id_V) (b_k \otimes b_m \otimes b_l) \\
&=& (\id_V \otimes \mathcal{T}) ( (\xi^{km} + \xi^{-km})b_k \otimes b_m \otimes b_l) \\
&=& (\xi^{km} + \xi^{-km}) b_k \otimes b_l \otimes b_m.
\end{eqnarray*}
Therefore, the fourth equality holds. Next, we show that the fifth equality holds.
\begin{eqnarray*}
&& (\id_V \otimes R) [ (\id_V \otimes \mathcal{T})(R \otimes \id_V) (\id_V \otimes \mathcal{T}) ](R \otimes \id_V) (b_k \otimes b_l \otimes b_m) \\
&&=  (\id_V \otimes R) [ (\id_V \otimes \mathcal{T})(R \otimes \id_V)(\id_V \otimes \mathcal{T})] ( \xi^{kl}b_k \otimes b_l \otimes b_m) \\
 &&=  (\id_V \otimes R) [ (\id_V \otimes \mathcal{T})(R \otimes \id_V)] ( \xi^{kl}b_k \otimes b_m \otimes b_l) \\
  &&=  (\id_V \otimes R)  (\id_V \otimes \mathcal{T}) ( \xi^{kl} \xi^{km}b_k \otimes b_m \otimes b_l) \\
  &&=   (\id_V \otimes R) ( \xi^{kl} \xi^{km}b_k \otimes b_l \otimes b_m) \\
    && =  \xi^{kl} \xi^{km} \xi^{lm} b_k \otimes b_l \otimes b_m,
\end{eqnarray*}

\begin{eqnarray*}
&& (R \otimes \id_V) [ (\id_V \otimes \mathcal{T})(R \otimes \id_V) (\id_V \otimes \mathcal{T}) ]  (\id_V \otimes R) 
(b_k \otimes b_l \otimes b_m) \\
&& = (R \otimes \id_V) [ (\id_V \otimes \mathcal{T})(R \otimes \id_V) (\id_V \otimes \mathcal{T}) ]  (b_k \otimes \xi^{lm} b_l \otimes b_m) \\
&&= (R \otimes \id_V) [ (\id_V \otimes \mathcal{T})(R \otimes \id_V) ]   (\xi^{lm}b_k \otimes  b_m \otimes b_l) \\
&&=  (R \otimes \id_V)  (\id_V \otimes \mathcal{T}) ( \xi^{km} \xi^{lm} b_k \otimes  b_m \otimes b_l) \\
&&= (R \otimes \id_V) ( \xi^{km} \xi^{lm} b_k \otimes  b_l \otimes b_m) \\
&&= \xi^{kl} \xi^{km}  \xi^{lm} b_k \otimes  b_l \otimes b_m.
\end{eqnarray*}

The sixth equality is very similar to the one we just verified, and thus we show only some of the steps below.
\begin{eqnarray*}
&& (\id_V \otimes R) [ (\id_V \otimes \mathcal{T})(R \otimes \id_V) (\id_V \otimes \mathcal{T}) ](S \otimes \id_V) (b_k \otimes b_l \otimes b_m) \\
&&=  (\id_V \otimes R) [ (\id_V \otimes \mathcal{T})(R \otimes \id_V)(\id_V \otimes \mathcal{T})] ( (\xi^{kl} + \xi^{-kl})b_k \otimes b_l \otimes b_m) \\
 && =  (\xi^{kl} + \xi^{-kl}) \xi^{km} \xi^{lm} b_k \otimes b_l \otimes b_m,
\end{eqnarray*}

\begin{eqnarray*}
&& (S \otimes \id_V) [ (\id_V \otimes \mathcal{T})(R \otimes \id_V) (\id_V \otimes \mathcal{T}) ]  (\id_V \otimes R) 
(b_k \otimes b_l \otimes b_m) \\
&&= (S \otimes \id_V) ( \xi^{km} \xi^{lm} b_k \otimes  b_l \otimes b_m) \\
&&=  (\xi^{kl} + \xi^{-kl})  \xi^{km}  \xi^{lm} b_k \otimes  b_l \otimes b_m.
\end{eqnarray*}

Finally, we show that the seventh equality holds.
\begin{eqnarray*}
R \mathcal{T} S (b_k \otimes b_l ) &=&
 R \mathcal{T} ( (\xi^{kl} + \xi^{-kl}) b_k \otimes b_l  ) \\
 &=& R ( (\xi^{kl} + \xi^{-kl})  b_l \otimes b_k ) \\
 &=& \xi^{kl}(\xi^{kl} + \xi^{-kl})  b_l \otimes b_k, 
\end{eqnarray*}
\begin{eqnarray*}
S \mathcal{T} R (b_k \otimes b_l ) &=& S \mathcal{T}  ( \xi^{kl} b_k \otimes b_l ) \\
&=&  S  ( \xi^{kl} b_l \otimes b_k ) \\
&=&  \xi^{kl}(\xi^{kl} + \xi^{-kl})  b_l \otimes b_k.
\end{eqnarray*}
Therefore, the operators $\mathcal{T}, R$ and $S$ defined by equations~\eqref{eq:v} --~\eqref{eq:tau} satisfy the properties given in Proposition~\ref{operators}, and therefore they provide an example of a functor $\mathcal{F}$ that yields a representation of $VSB_n$ to a submonoid of $End(V^{\otimes n})$. Moreover, the operator $R$ satisfies the Yang-Baxter equation.
\end{example}
 
 \textit{Final remarks.} We can quotient the set $End_{\textbf{FS}}([n])$ of morphisms $[n] \to [n]$ in $\textbf{FS}$ by the finite set of relations obtained by substituting the expressions for the morphisms $v_i, \gamma_i$ and $\mu_i^{\pm 1}$ as defined by equations~\eqref{categ-v_i}--\eqref{categ-mu_i} into the relations listed in Proposition~\ref{prop:redvssn}. We use $\textbf{FS}_{/{r}}$ to denote the resulting quotient category ($r$ stands for `relations'). Note that $\textbf{FS}_{/{r}}$ is still a strict monoidal category. We chose to make use of the relations in Proposition~\ref{prop:redvssn} versus those in Definition~\ref{def:mn}, so that we have a minimal set of generators for the monoid of all morphisms $[n] \to [n]$ in $\textbf{FS}_{/{r}}$.
 It is easy to see that $VSB_n$ is isomorphic to the monoid of morphisms $[n] \to [n]$ in $\textbf{FS}_{/{r}}$. Then one can work with this quotient category as a playground for studying virtual singular braids and the algebraic structure of the $n$-stranded virtual singular braid monoid.
 \bigbreak
 
 \noindent \textbf{Acknowledgements.} The first named author was partially supported by a grant from the Simons Foundation ($\#355640$, Carmen Caprau).

%%%%%%%%%%%%%%%%%%%%%%%%%%%%%%%%%%%%%%%%%%%%%%%%%%%%%%%%%%%%%%%%%%%%%%%%


\begin{thebibliography}{999} 
\bibitem{A} J.W. Alexander, A lemma on systems of knotted curves, \textit{Proc. Nat. Acad. Sci. USA} \textbf{9} (1923), 93-95.
\bibitem{BK}  B. Bakalov, A. Kirillov, ``Lectures on Tensor Categories and Modular Functors", University Lectures Series, Vol. 21, American Math. Soc., 2001.
\bibitem {Ba} V. Bardakov, The virtual and universal braids, \textit{Fund. Math.} \textbf{184}, Issue 1 (2004), 1-18.
\bibitem {BB} V. Bardakov and P. Bellingeri, Combinatorial properties of virtual braids, \textit{Topology and Its Applications} \textbf{156}, No. 6 (2009), 1071-1082.
\bibitem {BEE} L. Bartholdi, B. Enriquez, P. Etingof and E. Rains, Groups and Lie algebras corresponding to the Yang-Baxter equations, \textit{J. Alg.} \textbf{305} (2006), 742-764.
\bibitem {B} J. Birman, New points of view in knot theory, \textit{Bull. Amer. Math. Soc.} (New Series) \textbf{28}, No. 2 (1993), 253-287.
\bibitem {AS} C. Caprau, A. De la Pena, S. McGahan, Virtual singular braids and links, \textit{Manuscripta Math.} \textbf{151}, No 1 (2016), 147-175.
\bibitem {G} B. Gemein, Singular braids and Markov's theorem, \textit{J. Knot Theory and Ramifications} \textbf{6}, No. 4 (1997), 441-454.     
\bibitem{E} P. Etingof, S. Gelaki, D. Nikshych, and V. Ostrik, ``Tensor Categories", Math. Surveys and Monographs, Vol. 205, American Math. Soc., 2015.
\bibitem{Ka} S. Kamada, Braid presentation of virtual knots and welded knots, \textit{Osaka J. Math.} \textbf{44}, No. 2 (2007), 441-458.
\bibitem {K1} L.H. Kauffman, Virtual knot theory, \textit{European. J. Combin.} \textbf{20} (1999), 663-691.
\bibitem {KL1} L.H. Kauffman, S. Lambropoulou, Virtual braids, \textit{Fund. Math.} \textbf{184} (2004), 159-186.
\bibitem{KL2} L.H. Kauffman, S. Lambropoulou, A categorical model for the virtual braid group, \textit{J. of Knot Theory Ramifications} \textbf{21}, Issue 13, 1240008 (2012) [48 pages].
\bibitem{Ma} S. Mac Lane, ``Categories for the Working Mathematician" (2nd edition), Graduate Texts in Math, Vol. 5, Springer-Verlag, 1998.
\bibitem{MKS} W. Magnus, A. Karrass and S. Solitar, ``Combinatorial Group Theory", Interscience Publ., New York, 1996.
\bibitem {M} A.A. Markov, \"{U}ber die freie \"{A}quivalenz geschlossener Z\"{o}pfe, \textit{Recueil Math\'{e}matique Moscou} \textbf{1}, (1935).
\bibitem {V} V. V. Vershinin, On homology of virtual braids and Burau representation. Knots in Hellas '98, Vol. 3, \textit{J. Knot Theory Ramifications} \textbf{10}, No. 5 (2001), 795-812.
\end{thebibliography}
\end{document}